\numberwithin{equation}{section}
\newtheorem{theorem}{Theorem}[section]
\newtheorem{lemma}[theorem]{Lemma}
\newtheorem{proposition}[theorem]{Proposition}
\newtheorem{counterexample}[theorem]{Counterexample}
\newtheorem{definition}[theorem]{Definition}
\newtheorem{conjecture}[theorem]{Open Problem}
\numberwithin{equation}{section}
\def\XXint#1#2#3{{\setbox0=\hbox{$#1{#2#3}{\int}$ }
\vcenter{\hbox{$#2#3$ }}\kern-.6\wd0}}
\newcommand{\bd}{\operatorname{BD}}
\newcommand{\bv}{\operatorname{BV}}
\newcommand{\di}{\operatorname{div}}
\newcommand{\dif}{\operatorname{d}\!}
\newcommand{\spt}{\operatorname{spt}}
\newcommand{\tr}{\operatorname{Tr}}
\newcommand{\R}{\mathbb{R}}
\newcommand{\A}{\mathbb{A}}
\newcommand{\C}{\mathbb{C}}
\newcommand{\locc}{\operatorname{loc}}
\newcommand{\trace}{\operatorname{Tr}}
\newcommand{\ball}{B}
\newcommand{\spano}{\operatorname{span}}
\newcommand{\inte}{\operatorname{int}}
\newcommand{\sobo}{\operatorname{W}}
\newcommand{\Vsob}{\operatorname{V}}
\newcommand{\lebe}{\operatorname{L}}
\newcommand{\hold}{\operatorname{C}}
\newcommand{\D}{D}
\newcommand{\curl}{\operatorname{curl}}
\renewcommand{\leq}{\leqslant}
\newcommand{\imag}{\operatorname{i}}
\newcommand{\besov}{\operatorname{B}}
\renewcommand{\di}{\operatorname{div}}
\newcommand{\diam}{\operatorname{diam}}
\newcommand{\lin}{\mathscr{L}}
\begin{document}
\title[Embeddings of $\sobo^{\A,1}$]{Embeddings for $\A$--weakly differentiable functions on domains}
\author[F. Gmeineder]{Franz Gmeineder}
\author[B. Rai\cb{t}\u{a}]{Bogdan Rai\cb{t}\u{a}}
\thanks{\emph{Author's address}: F.~Gmeineder, Bonn, Mathematisches Institut, Endenicher Allee 60, Bonn, Germany. \normalfont{\textit{Email}: \texttt{fgmeined@math.uni-bonn.de}};\\
	\indent\emph{Corresponding author}: B.~Rai\cb{t}\u{a}. \emph{Address}: University of Warwick, Zeeman Building, Coventry, CV4 7HP, United Kingdom. \normalfont{\textit{Email}: \texttt{bogdan.raita@warwick.ac.uk}}}
\subjclass[2010]{Primary: 46E35; Secondary: 26D10}
\keywords{$\mathrm{L}^1$-estimates; Elliptic systems; Sobolev inequalities; Jones extension}
\maketitle

 \begin{abstract}
 	We prove that the inhomogeneous estimate of vector fields on balls in $\mathbb{R}^n$
 	$$
 	\left(\int_{B}|D^{k-1}u|^{n/(n-1)}\mathrm{d} x\right)^{(n-1)/n}\leq c\left(\int_{B}|\mathbb{A} u|+|u|\mathrm{d} x\right)\text{ for  all }u\in\mathrm{C}^\infty(\bar B,\mathbb{R}^N)
 	$$
 	holds if and only if the linear, constant coefficient differential operator $\mathbb{A}$ of order $k$ has finite dimensional null-space (FDN). This generalizes the Gagliardo-Nirenberg-Sobolev inequality on domains and provides the local version of the analogous homogeneous embedding in full-space
 	$$
 	\left(\int_{\mathbb{R}^n}|D^{k-1}u|^{n/(n-1)}\mathrm{d} x\right)^{(n-1)/n}\leq c\int_{\mathbb{R}^n}|\mathbb{A} u|\mathrm{d} x\qquad\text{ for  all }u\in\mathrm{C}^\infty_c(\mathbb{R}^n,\mathbb{R}^N),
 	$$
 	proved by Van Schaftingen precisely for elliptic and cancelling (EC) operators, building on fundamental $\mathrm{L}^1$-estimates from the works of Bourgain and Brezis. We prove that FDN strictly implies EC and discuss the contrast between homogeneous and inhomogeneous estimates on both algebraic and analytic level.
 \end{abstract}
\section{Introduction}\label{sec:intro}
A known principle in harmonic analysis is that strong--type $\lebe^1$--estimates are notoriously delicate to obtain. For example, singular integrals and Riesz potentials are only bounded from $\lebe^1$ into a weak--type space, which contrasts the case of $\lebe^p$--spaces, $p>1$. To note that these $\lebe^1$--estimates of weak--type are sharp, one simply tests the inequalities with an approximation of the identity.

Historically, this discrepancy can be observed already from the original proof of the Sobolev inequality for $1<p<n$ and $p^*=\frac{np}{n-p}$,
\begin{align}\label{eq:sobolev}
\|u\|_{\lebe^{p^*}(\R^n)}\leq c\|\D u\|_{\lebe^{p}(\R^n)}
\end{align}
for $u\in\hold^\infty_c(\R^n)$. The technique of proof in \cite{Sobolev} resembles proving boundedness of the Riesz potential $I_1$ between $\lebe^p$ and $\lebe^{np/(n-p)}$, but it is in no way adaptable to the $p=1$ case. It was much later that \textsc{Gagliardo} \cite{Gag} and \textsc{Nirenberg} \cite{Nir} independently showed with new methods that \eqref{eq:sobolev} holds also for $p=1$, in particular showing that the specific vectorial structure of the gradient operator allows for a strong--type estimate, despite unboundedness of $I_1$ between $\lebe^1$ and $\lebe^{1^*}$.

Coupled with the fact attributable to \textsc{Calder\'on} and \textsc{Zygmund} \cite{CZ} that 
\begin{align}\label{eq:CZ}
\|\D^k u\|_{\lebe^p(\R^n)}\leq c\|\A u\|_{\lebe^p(\R^n)}
\end{align}
for $k$--th order elliptic operators $\A$ and $1<p<\infty$, it seems plausible that the vectorial structure of some operators $\A$ may also compensate for unboundedness of singular integrals on $\lebe^1$. This fact was disproved by \textsc{Ornstein} in \cite{Ornstein} (see also \cite{KirKri}), where it is shown that the estimate \eqref{eq:CZ} holds for $p=1$ only if $|\D^k u|\leq c|\A u|$ pointwisely for all $u\in\hold^\infty_c$.

However, it remained possible that strong--type $\lebe^1$--estimates for lower order derivatives can be deduced. Indeed, it was proved by \textsc{Strauss} in \cite{Strauss} that \eqref{eq:sobolev} holds for $p=1$ when the $\lebe^1$--norm of $\D u$ is replaced by the weaker quantity $\|\mathcal{E}u\|_{\lebe^1}$. Here $\mathcal{E}u=\frac{1}{2}(\D u+(\D u)^t)$ denotes the symmetrized gradient of $u$. More recently, it was proved by \textsc{Bourgain} and \textsc{Brezis} in the pioneering works \cite{BB04,BB07} that, for Poisson's equation in $\R^n$, $n\geq2$,
 $$\Delta u=f,$$
the surprising strong $\lebe^1$--estimate
\begin{align*}
\|\D u\|_{\lebe^{1^*}}\leq c\|f\|_{\lebe^1}
\end{align*}
holds provided that  $f\in\hold^\infty_c$ is divergence--free. This and substantial contributions in \cite{BBM1,BBM2,BBM3,BB02,BBCR,BB04,BB07,VS-1,VS0,VS1,VS4}
lead to the remarkable characterization by \textsc{Van Schaftingen} \cite{VS} of all $k$--homogeneous linear differential operators $\A$ such that
\begin{align}\label{eq:VS}
\|\D^{k-1}u\|_{\lebe^{1^*}(\R^n)}\leq c\|\A u\|_{\lebe^1(\R^n)}
\end{align}
for all $u\in\hold^\infty_c$. The class of operators $\A$ for which \eqref{eq:VS} holds is that of \emph{elliptic} and \emph{cancelling} operators (EC). Both these assumptions are defined in terms of the symbol map of the operator $\A$, the definition of which we now recall. We will represent $k$--homogeneous linear differential operators with constant coefficients on $\R^n$ from $V$ to $W$ as 
\begin{align}\label{eq:A}
\A u=\sum_{|\alpha|=k} A_\alpha\partial^\alpha u,\qquad u\colon\R^{n}\to V,
\end{align}
where $A_\alpha\in\lin(V,W)$ are fixed linear mappings between two finite dimensional normed real vector spaces $V$ and $W$. The symbol map is defined as
\begin{align*}
\A[\cdot]\colon \R^{n}\to \mathscr{L}(V,W),\;\;\;\A[\xi] v=\sum_{|\alpha|=k} \xi^\alpha A_\alpha v,
\end{align*}
defined for $\xi\in\R^n$, $v\in V$. Algebraically, (overdetermined) \emph{ellipticity} is defined by injectivity of the symbol map $\A[\xi]$ for all non--zero $\xi\in\R^n$, whereas \emph{cancellation}, introduced in \cite[Def.~1.2]{VS}, is defined by
\begin{align}\label{eq:canc}
\bigcap_{\xi\in\R^n\setminus\{0\}}\mathrm{im\,}\A[\xi]=\{0\}.
\end{align}
We will use the short--hand EC for operators that are elliptic and cancelling.

Analytically, ellipticity is equivalent to the classical estimate \eqref{eq:CZ}. Surprisingly and interestingly, cancellation is equivalent to non--admissibility for \eqref{eq:VS} of approximations of the identity, in the sense that if  $\A u_\varepsilon=\varphi_\varepsilon w$ for standard mollifiers $\varphi_\varepsilon\in\hold^\infty_c(\R^n)$ and $w\in W$, then $w\in\mathrm{im\,}\A[\xi]$ for all $\xi\neq0$.

An overarching overview of these and other recent developments on $\lebe^1$--estimates can be found in \cite{VS3}, where it is also asked in Open~Problem~3 whether, under suitable complementing boundary conditions, one can develop global strong--type estimates on domains. It is implicitly conjectured that estimates on smooth domains $\Omega\subset\R^n$ such as
\begin{align*}
\|\D^{k-1}u\|_{\lebe^{1^*}(\Omega)}\leq\|\A u\|_{\lebe^1(\Omega)}+\|u\|_{\lebe^1(\Omega)},
\end{align*}
provided that $\A$ is EC and $u\in\hold^\infty(\bar\Omega,V)$ satisfy $\mathbb{B}_j u=0$ on $\partial\Omega$, where $\mathbb{B}_j$ is a (finite collection of) linear differential operator(s) defined on $\partial\Omega$ that satisfy the Lopatinski\u{\i}--Shapiro Complementing Conditions. Such a result would provide a reasonable analogue of the results in \cite{Lopa,ADN1,ADN2,HormanderBdry} to the case $p=1$, in spite of Ornstein's Non--inequality.

The aim of this paper is to confirm this expectation in the case when $\mathbb{B}_j\equiv0$ (``no boundary condition'') and $\Omega$ is a ball (whereas \textsc{Van Schaftingen}'s result \cite[Thm.~1.3]{VS} essentially deals with the antipodal case when $\mathbb{B}_j=\partial_\nu^j$, $j=0\ldots k-1$, i.e., ``all boundary conditions''). We emphasize that in the present situation the geometry of $\partial\Omega$ is \textbf{not} the foremost problem, as is the extendibility of functions $u\colon\Omega\to V$ to some $v\colon\R^{n}\to V$ while ensuring that $\A v\in\lebe^{1}(\R^{n},V)$ boundedly. In fact, as we shall see below, this property fails for a large class of EC operators. For more generality, the reader can easily use our arguments to cover the case when $\Omega$ is a bounded Lipschitz domain. 

The complementing conditions for smooth domains for an elliptic operator $\A$ and identically zero boundary conditions \cite[Def.~20.1.1]{HormanderIII} can be rephrased as
\begin{align}\label{eq:C_ell}
\A[\xi]\in\lin(V+\imag V,W+\imag W)\qquad\text{ is injective for all }\xi\in\C^n\setminus\{0\}.
\end{align}
This condition, referred to as \emph{$\C$--ellipticity} in \cite{BDG}, and attributable to \textsc{Aronszajn} \cite{Aronszajn} (at least in the case of scalar--valued maps), was introduced in \cite{Smith0,Smith} to characterize operators $\A$ such that the local variant of \eqref{eq:CZ} holds, i.e.,
\begin{align}\label{eq:korn_dom}
\|\D^k u\|_{\lebe^p(\ball)}\leq c\left(\|\A u\|_{\lebe^p(\ball)}+\|u\|_{\lebe^p(\ball)}\right)
\end{align}
holds for $u\in\hold^\infty(\bar\ball,V)$ (here $1<p<\infty$). By Ornstein's Non--inequality, no such estimate is possible for $p=1$, but, inspired by \textsc{Van Schaftingen}'s Theorem and Open Problem, we will prove that $\C$--ellipticity of $\A$ is equivalent to the estimate
\begin{align}\label{eq:main_emb}
\|\D^{k-1} u\|_{\lebe^{n/(n-1)}(\ball)}\leq c\left(\|\A u\|_{\lebe^1(\ball)}+\|u\|_{\lebe^1(\ball)}\right)
\end{align}
for $u\in\hold^\infty_c(\bar\ball,V)$. In particular, we recover the Gagliardo--Nirenberg--Sobolev inequality on domains and the Korn--Sobolev inequality \cite[Prop.~1.2]{ST}, due to \textsc{Strang} and \textsc{Temam}.

To formally state our results, we prefer to use the functional framework of $\A$--weakly differentiable functions and define, in the spirit of \cite{Adolfo,BDG}, the space $\sobo^{\A,1}(\ball)$ as the space of $u\in\lebe^1(\ball,V)$ such that $\A u\in\lebe^1(\ball,W)$ with the obvious norm. In view of characterizing
operators $\A$ such that \eqref{eq:main_emb} holds, we firstly confine to operators of order one  for simplicity and state our main result in the following slightly more elaborate form:
\begin{theorem}\label{thm:main}
Let $\A$ be as in \eqref{eq:A}, $k=1$, $n>1$. The following are equivalent:
\begin{enumerate}
\item\label{it:main_a} $\A$ is $\C$--elliptic.
\item\label{it:main_b} $\sobo^{\A,1}(\ball)\hookrightarrow\lebe^{\frac{n}{n-1}}(\ball,V)$.
\item\label{it:main_c} $\sobo^{\A,1}(\ball)\hookrightarrow\lebe^{p}(\ball,V)$ for some $1<p\leq\frac{n}{n-1}$.
\item\label{it:main_d} $\sobo^{\A,1}(\ball)\hookrightarrow\hookrightarrow\lebe^{q}(\ball,V)$ for all $1\leq q<\frac{n}{n-1}$.
\item\label{it:main_e} $\sobo^{\A,1}(\ball)\hookrightarrow\hookrightarrow\lebe^{1}(\ball,V)$.
\end{enumerate}
\end{theorem}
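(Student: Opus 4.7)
The plan is to close the five conditions via the cycle $(a)\Rightarrow(b)\Rightarrow(d)\Rightarrow(e)\Rightarrow(a)$ together with $(b)\Rightarrow(c)\Rightarrow(a)$. The main analytic weight rests on $(a)\Rightarrow(b)$, which will be reduced to Van Schaftingen's full-space inequality \eqref{eq:VS}; this reduction is legitimate since FDN strictly implies EC (as established elsewhere in this paper). The reverse implications $(e)\Rightarrow(a)$ and $(c)\Rightarrow(a)$ probe the closed subspace $K:=\ker\A\cap\sobo^{\A,1}(\ball)$, on which the $\sobo^{\A,1}$-norm reduces to $\|\cdot\|_{\lebe^1(\ball)}$.

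\textbf{Extension and Van Schaftingen.} For $(a)\Rightarrow(b)$ the goal is to construct a bounded linear extension operator $E\colon\sobo^{\A,1}(\ball)\to\sobo^{\A,1}(\R^n)$ with image compactly supported in a fixed ball $2\ball$ and with
\[
\|\A Eu\|_{\lebe^1(\R^n,W)}\leq c\bigl(\|\A u\|_{\lebe^1(\ball,W)}+\|u\|_{\lebe^1(\ball,V)}\bigr);
\]
then \eqref{eq:VS} applied to $Eu$ and restricted to $\ball$ yields (b). The construction of $E$ uses the trace theorem $\bv^\A(\ball)\hookrightarrow\lebe^1(\partial\ball,V)$ of \cite{BDG} (valid precisely under FDN) to control $u|_{\partial\ball}$ in $\lebe^1$. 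One then extends the boundary datum into the annulus $2\ball\setminus\ball$, for instance by solving an auxiliary $\A$-Dirichlet problem there, and multiplies by a smooth cutoff. The commutator of $\A$ with the cutoff is of order $k-1=0$ and hence contributes only the lower-order term $\|u\|_{\lebe^1}$ on the right-hand side---this is precisely the source of the inhomogeneity appearing in~(b).

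\textbf{Compactness and the direct converses.} The implications $(b)\Rightarrow(c)$ (take $p=n/(n-1)$) and $(d)\Rightarrow(e)$ (take $q=1$) are immediate. For $(b)\Rightarrow(d)$ one runs a Rellich--Kondrachov-type argument: bounded sets in $\sobo^{\A,1}(\ball)$ are bounded in $\lebe^{n/(n-1)}(\ball)$ by (b) and hence equi-integrable in every $\lebe^{q}(\ball)$ with $q<n/(n-1)$; combining equi-integrability with the FDN-induced decomposition $u=u_0+v$, where $u_0$ lies in the finite-dimensional $\ker\A$ and $v$ enjoys a Korn--Sobolev gain, Fréchet--Kolmogorov provides strong precompactness in $\lebe^q(\ball)$. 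For $(e)\Rightarrow(a)$: restricting the compact embedding to $K$ yields the identity of $K$ as a subspace of $\lebe^1(\ball)$, which must then be compact; Riesz's lemma forces $\dim K<\infty$, i.e.\ (a).

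\textbf{The implication $(c)\Rightarrow(a)$ and main obstacles.} On $K$, (c) yields the uniform bound $\|u\|_{\lebe^p(\ball)}\leq c\|u\|_{\lebe^1(\ball)}$. If FDN failed, constant-coefficient theory would produce in $K$ polynomial solutions of arbitrarily large degree; by Nikolskii-type arguments, already visible in the basic estimate $\|x_1^d\|_{\lebe^p(\ball)}/\|x_1^d\|_{\lebe^1(\ball)}\asymp d^{1-1/p}$, one locates in each high-degree slice of $K$ a polynomial for which this ratio diverges, contradicting (c). The principal obstacle is the construction of the extension $E$ in step $(a)\Rightarrow(b)$, where the cutoff must be chosen so that its commutator with $\A$ contributes only a zero-order $\lebe^1$-term; a secondary delicate point is to rule out, in $(c)\Rightarrow(a)$, infinite-dimensional subspaces of $\lebe^1(\ball)$ on which the $\lebe^p$- and $\lebe^1$-norms are equivalent (possible in general $\lebe^1$-spaces, e.g.\ via Rademacher systems), which is prevented here by the algebraic rigidity of distributional $\A$-null solutions.
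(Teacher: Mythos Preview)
Your overall architecture---the cycle $(a)\Rightarrow(b)\Rightarrow(d)\Rightarrow(e)\Rightarrow(a)$ together with $(b)\Rightarrow(c)\Rightarrow(a)$, the reduction of $(a)\Rightarrow(b)$ to Van Schaftingen via an extension, and a Riesz/Peetre--Tartar argument for $(e)\Rightarrow(a)$---matches the paper's. The step $(e)\Rightarrow(a)$ is essentially identical to the paper's use of the Peetre--Tartar equivalence lemma. However, two of your intermediate steps have genuine gaps.

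\emph{The extension in $(a)\Rightarrow(b)$.} You propose to extend $u$ across $\partial\ball$ by ``solving an auxiliary $\A$-Dirichlet problem'' in the annulus with boundary datum $\operatorname{Tr}u$. For a first-order elliptic system this is ill-posed: one cannot prescribe full Dirichlet data for $\A\tilde u=0$ on both boundary components of an annulus (think of $\bar\partial$). Even if you reinterpret this as ``produce some $\tilde u\in\sobo^{\A,1}(2\ball\setminus\overline{\ball})$ with the right inner trace and $\|\tilde u\|_{\sobo^{\A,1}}\lesssim\|\operatorname{Tr}u\|_{\lebe^1(\partial\ball)}$'', you are assuming a bounded right inverse of the trace, which is of the same depth as the extension theorem you are after. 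The paper deliberately avoids this route and builds a Jones-type extension instead: Whitney cubes in $\R^n\setminus\overline{\ball}$, reflected cubes $Q^*$ in $\ball$, and on each exterior cube the $\lebe^2$-projection $\pi_{Q^*}u$ of the averaged Taylor polynomial onto $\ker\A$. The reason, stated explicitly in the paper, is that the natural Ka{\l}amajska/Green-function extension requires a Calder\'on--Zygmund bound on $\lebe^1$ which fails; the Jones construction succeeds because its key analytic input is only a Poincar\'e-type inequality $\|\nabla^l(u-\pi_\Omega u)\|_{p}\lesssim(\diam\Omega)^{k-l}\|\A u\|_{p}$ on cubes, which uses sub-critical Riesz bounds and hence works for $p=1$.

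\emph{Compactness $(b)\Rightarrow(d)$.} Equi-integrability in $\lebe^q$ (from boundedness in $\lebe^{n/(n-1)}$) is not enough for Fr\'echet--Kolmogorov; you still need uniform continuity under translations. Your ``FDN-induced decomposition $u=u_0+v$ where $v$ enjoys a Korn--Sobolev gain'' does not supply this, since by Ornstein $v$ does not in general lie in $\sobo^{1,1}$, so standard Rellich is unavailable. The paper closes this gap with a Nikolski\u{\i}-type estimate: for elliptic $\A$ (which already follows from the embedding in $(b)$ alone---no FDN needed) and compactly supported $u$, the representation $u=\mathbf{K}_\A*\A u$ with $\mathbf{K}_\A$ $(1{-}n)$-homogeneous gives $\|u(\cdot+h)-u\|_{\lebe^q}\lesssim|h|^{s}\|\A u\|_{\lebe^1}$ for any $s<1$ and $q<n/(n-1+s)$. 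This, combined with a cut-off near $\partial\ball$ whose commutator is controlled via the $\lebe^{n/(n-1)}$ bound from $(b)$, feeds directly into Riesz--Kolmogorov. Note also that your version of $(b)\Rightarrow(d)$ invokes FDN, whereas the paper's does not; this matters because the paper obtains $(c)\Rightarrow(e)$ by the same argument with $n/(n-1)$ replaced by $p$, and only then deduces $(a)$.

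As a smaller remark, your direct route $(c)\Rightarrow(a)$ via monomials in $\ker\A$ is different from the paper (which goes $(c)\Rightarrow(e)\Rightarrow(a)$ through compactness) and is plausible; but the actual null-solutions available when $\C$-ellipticity fails are of the form $x\mapsto f(x\cdot\xi)v$ for a complex direction $\xi$, and showing $\|\cdot\|_p/\|\cdot\|_1\to\infty$ along such a family needs a computation, not just the scalar heuristic $\|x_1^d\|_p/\|x_1^d\|_1\asymp d^{1-1/p}$.
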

This result manifests the following dichotomy: Either $\A$ is $\C$--elliptic (in which case
one retrieves the known Sobolev embeddings on domains for the gradient or symmetric
gradient), or $\A$ is not $\C$--elliptic. In the latter case, a map $u\in\sobo^{\A,1}(B)$ only belongs to $\lebe^1(B,V)$ (by definition), but \emph{no better} $\lebe^p(B,V)$--space. The techniques and auxiliary statements involved to arrive at the conclusion of the theorem play a role in a variety of related statements, see for instance the forthcoming work \cite{GR}. 

To prove Theorem~\ref{thm:main}, a crucial step is to show that $\C$--ellipticity implies cancellation, which is one of the main novelty of this paper. Another important step is to extend to full--space and employ \eqref{eq:VS}. The fact that the sub--critical embedding in  \ref{it:main_d} is compact generalizes the well--known result for $\bd$ (i.e., for $\A=\mathcal{E}$; see \cite{FS,ST,Suquet}) and is achieved by a careful application of the Riesz--Kolmogorov criterion.

It is a priori far from obvious how can one connect the algebraic definitions of $\C$--ellipticity \eqref{eq:C_ell} and cancellation \eqref{eq:canc}. We, however, consider the analytic characterization of cancellation \cite[Prop.~6.1]{VS} and a consequence of \textsc{Smith}'s representation formulas \cite{Smith} (the kernel of a $\C$--elliptic operator is finite dimensional) to build a bridge in Lemma~\ref{lem:FDNimpliesEC}. In fact, it was already observed in \cite{BDG} for first order operators that $\C$--ellipticity of $\A$ is equivalent with 
\begin{align*}
\dim\{u\in\mathscr{D}^\prime(\R^n,V)\colon\A u=0\}<\infty.\tag{FDN}
\end{align*}
In Proposition~\ref{prop:FDNiffTypeC} we will give a short proof of the fact that $\mathbb{C}$--ellipticity and FDN are equivalent for operators of arbitrary order. Henceforth, we will thus use ``FDN'' and ``$\mathbb{C}$--elliptic'' interchangeably.

On the other hand, we also show that the implication of EC by $\C$--ellipticity is strict by considering the first order operator
\begin{align*}
\A u=\left(\partial_1u_1-\partial_2u_2,\,\partial_2u_1+\partial_1u_2,\,\partial_3u_1,\,\partial_3u_2\right)\qquad\text{for }u\colon\R^3\rightarrow\R^2.
\end{align*}
In particular, for this operator, there are maps in $\sobo^{\A,1}(\ball)$ that are locally $\frac{n}{n-1}$--integrable, but are \emph{not} $\lebe^p$--integrable up to the boundary for any $p>1$. Nor do they admit traces in $\lebe^1(\partial,\ball,V)$ by \cite[Thm.~4.18]{BDG}. We further expand on these points in Section~\ref{sec:EC>emb}.

In Section~\ref{sec:ECvsFDN} we will give a more comprehensive comparison of the notions of $\C$--ellipticity and EC, depending on $n$, $N=\dim V$, and the order $k$ of $\A$. This seems to be the first comparison between Lopatinski\u{\i}--Shapiro Complementing Conditions at the boundary and algebraic conditions characterizing interior $\lebe^1$--estimates.
\begin{proposition}\label{prop:intro_EC_vs_FDN}
	Let $\A$ be as in \eqref{eq:A}, $n>1$, and $N=\dim V$. If $\A$ is $\C$--elliptic, then $\A$ is elliptic and cancelling. The converse fails in general, except in the following cases:
	\begin{enumerate}
		\item $k=N=1$, when ellipticity implies $\C$--ellipticity;
		\item ($N=1$ and $k=2$) or ($N\geq 2$ and $k=1$), when EC implies $\C$--ellipticity.
	\end{enumerate}
\end{proposition}
We also summarize the content of Proposition~\ref{prop:intro_EC_vs_FDN} in a table, Figure~\ref{fig:table} below.
\begin{figure}[ht]\label{fig:table}
	\begin{tabular}{|c|c|}
	\hline
	&	\begin{tabular}{c|c}
		$N=1\qquad\quad$&$\qquad\quad N\geq2$
		\end{tabular}\\
	\hline
	$n=2$&
	\begin{tabular}{c|c}
		\begin{tabular}{l}
		$k=1$: \textcolor{NavyBlue}{E$\Rightarrow$FDN}\\
		\hline
		$k=2$: \textcolor{ForestGreen}{EC$\Rightarrow$FDN}\\
		\hline
		$k\geq3$: \textcolor{BrickRed}{EC$\not\Rightarrow$FDN}
		\end{tabular}&
		\begin{tabular}{l}
		$k=1$: \textcolor{ForestGreen}{EC$\Rightarrow$FDN}\\
		\hline
		$k\geq2$: \textcolor{BrickRed}{EC$\not\Rightarrow$FDN}\\
		\\
		\end{tabular}\\
	\end{tabular}\\
	\hline
	$n\geq3$&
	\begin{tabular}{c|c}
		\begin{tabular}{l}
		$k=1$: \textcolor{NavyBlue}{E$\Rightarrow$FDN}\\
		\hline
		$k\geq2$: \textcolor{BrickRed}{EC$\not\Rightarrow$FDN}
		\end{tabular}&
		\begin{tabular}{l}
		\textcolor{BrickRed}{EC$\not\Rightarrow$FDN}$\qquad\quad\hspace{0.4mm}$\\
		\end{tabular}
	\end{tabular}\\	
	\hline
	\end{tabular}
\caption{Comparison of the conditions E (elliptic), EC (elliptic and canceling), and FDN (finite dimensional null--space) for operators $\A$ as in \eqref{eq:A} acting on maps $u\colon\R^n\rightarrow V$, depending on the order $k$, and the dimensions $n$ and $N=\dim V$.}
\end{figure}

The matter of the extending $\sobo^{\A,1}(B)$--maps to $\R^n$ is technically very delicate, as we cannot formulate a simple proof as in \cite{Smith,Kalamajska94} due to lack of boundedness of singular integrals on $\lebe^1$ (cp. Lemma~\ref{lem:extp>1}). Instead, we resort to the involved technique introduced by \textsc{Jones} \cite{Jones}. From a conceptual perspective, this method crucially
relies on inverse estimates for polynomials and thereby underlines the need of the
FDN. The ideas for adapting this construction for FDN operators instead originates in \cite{BDG}, where a related result was proved for first order operators. Namely, it was shown that maps in $\sobo^{\A,1}(\ball)$ admit $\lebe^1$--traces on $\partial\ball$ if and only if $\A$ has FDN. In particular, their result and ours point in the direction that FDN is a suitable assumption for estimates near and on the boundary also in the case $p=1$.

In general,
we have the following result, which we believe to be of independent interest besides serving
as a crucial tool in the proof of Theorem~\ref{thm:main}:
\begin{theorem}\label{thm:tools}
Let $\A$ be as in \eqref{eq:A}, $n>1$. Then $\A$ has FDN if and only if $\A$ is $\C$--elliptic. Moreover, if $\A$ has FDN, then $\A$ is cancelling and there exists a bounded, linear extension operator $E_B\colon\sobo^{\A,1}(\ball)\rightarrow\sobo^{\A,1}(\R^n)$.
\end{theorem}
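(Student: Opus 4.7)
The plan is to prove the theorem in three parts corresponding to its three assertions: (i) the equivalence of FDN and $\C$-ellipticity, (ii) the implication FDN $\Rightarrow$ cancelling, and (iii) the construction of the bounded extension operator $E_{B}$. For (i), I would invoke \cite{BDG} for the first order case and \cite{Smith} for general $k$. The underlying idea is that if $\A$ is $\C$-elliptic, then $\A u=0$ in $\mathcal{D}'(\R^{n},V)$ forces $\A[\xi]\hat{u}(\xi)=0$, so by injectivity of $\A[\xi]$ on $V+\imag V$ for $\xi\neq 0$ in $\C^{n}$ the Fourier transform $\hat u$ is supported at the origin, whence $u$ is polynomial and the null space is finite dimensional; conversely, failure of $\C$-ellipticity produces a nontrivial characteristic variety that generates infinitely many linearly independent exponential solutions.

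For (ii), I would argue by contradiction using analytic continuation combined with Hartogs' extension. Suppose $\A$ is $\C$-elliptic and some $w_{0}\neq 0$ lies in $\A[\xi](V)$ for every $\xi\in\R^{n}\setminus\{0\}$. The condition $w_{0}\in\A[\xi](V+\imag V)$ is algebraic, given by the vanishing of the $(\dim V+1)$-minors of the augmented matrix $[\A[\xi]\mid w_{0}]$; these are polynomials in $\xi$ which vanish on $\R^{n}\setminus\{0\}$, hence identically on $\C^{n}\setminus\{0\}$. By $\C$-ellipticity, the unique solution $v(\xi)\in V+\imag V$ of $\A[\xi]v(\xi)=w_{0}$ is then defined on all of $\C^{n}\setminus\{0\}$ and depends holomorphically on $\xi$ (Cramer's rule in local holomorphic charts together with uniqueness), while homogeneity of the symbol gives $v(\lambda\xi)=\lambda^{-k}v(\xi)$. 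Since $n\geq 2$, Hartogs' extension theorem continues $v$ holomorphically to all of $\C^{n}$; but then the scaling relation with $k\geq 1$ forces $v\equiv 0$ (otherwise $|v(\lambda\xi)|\to\infty$ as $\lambda\to 0$, contradicting continuity at the origin), contradicting $\A[\xi]v(\xi)=w_{0}\neq 0$.

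For (iii), the plan is to reduce to a classical Stein/Calder\'on extension after first boosting regularity. Using $\C$-ellipticity, one establishes the Korn-type embedding
\begin{equation*}
\sobo^{\A,1}(\ball)\hookrightarrow\sobo^{k,1}(\ball,V),\qquad\|\D^{k}u\|_{\lebe^{1}(\ball)}\leq C\bigl(\|\A u\|_{\lebe^{1}(\ball)}+\|u\|_{\lebe^{1}(\ball)}\bigr),
\end{equation*}
which is available on Lipschitz domains for $\C$-elliptic operators precisely because FDN reduces the null space of $\A$ on $\ball$ to the finite dimensional space of polynomial solutions, enabling a compactness-and-contradiction argument that closes an $\lebe^{1}$-estimate otherwise blocked by Ornstein's non-inequality. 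Composing with a standard Stein extension $E\colon\sobo^{k,1}(\ball)\to\sobo^{k,1}(\R^{n})$ and setting $E_{B}u:=Eu$ then yields $\|\A E_{B}u\|_{\lebe^{1}(\R^{n})}\leq C\|\D^{k}E_{B}u\|_{\lebe^{1}(\R^{n})}\leq C'\|u\|_{\sobo^{\A,1}(\ball)}$, completing the construction.

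The main obstacle is the $\lebe^{1}$-Korn-type inequality underlying (iii): on full space the analogous bound fails for essentially every elliptic $\A$ by Ornstein's non-inequality, so the argument must genuinely exploit both boundedness of the domain and the FDN condition. The algebraic steps (i) and (ii) are comparatively clean once the Hartogs extension trick is in place; the work lies in transporting the finite dimensionality of the null space into a quantitative coercive estimate usable at the $\lebe^{1}$ endpoint.
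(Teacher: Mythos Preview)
Your argument for (ii) --- FDN implies cancelling --- is correct and genuinely different from the paper's. The paper argues via Van Schaftingen's integral characterization (Lemma~\ref{lem:canc}): if $\A u$ is compactly supported, FDN forces $u$ to coincide with a polynomial $P\in\ker\A$ outside a large ball, and integration by parts gives $\int\A u=0$. Your Hartogs route is more algebraic and bypasses this characterization entirely; it is a pleasant alternative.

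Part (iii), however, contains a fatal gap. The embedding $\sobo^{\A,1}(\ball)\hookrightarrow\sobo^{k,1}(\ball,V)$ that you invoke is \emph{false}, and no compactness-and-contradiction argument rescues it. Ornstein's non-inequality is local: already for $\A=\mathcal{E}$ there exist $u_{j}\in\hold^{\infty}_{c}(\ball,\R^{n})$ with $\|\mathcal{E}u_{j}\|_{\lebe^{1}}\leq 1$ and $\|\D u_{j}\|_{\lebe^{1}}\to\infty$; since the $u_{j}$ are compactly supported, the representation formula \eqref{eq:representation} keeps $\|u_{j}\|_{\lebe^{1}}$ bounded as well, so the lower-order term does not help. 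Your compactness scheme breaks because normalizing $\|\D^{k}u_{j}\|_{\lebe^{1}}=1$ and sending $\|\A u_{j}\|_{\lebe^{1}}+\|u_{j}\|_{\lebe^{1}}\to 0$ produces a limit that is zero --- not a nontrivial element of $\ker\A$ --- so there is nothing to contradict. Indeed, were your embedding true, Theorem~\ref{thm:main_k} would follow at once from classical Sobolev embedding, and the paper would be superfluous. What FDN actually provides (Lemma~\ref{lem:sob_variants}, Proposition~\ref{prop:poinc}) is $\lebe^{1}$-control of $\D^{l}u$ only for $l\leq k-1$, and the paper's extension is tailored to exactly this: it adapts Jones' method by assigning to each exterior Whitney cube a polynomial in $\ker\A$ rather than a Taylor polynomial, so that applying $\A$ to the extension produces terms involving at most $k-1$ derivatives of the correctors, which the Poincar\'e-type inequality then handles.

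A smaller remark on (i): your Fourier heuristic uses only real $\xi$ and tempered distributions, so it would equally ``prove'' FDN for $\Delta$ or $\bar\partial$. The actual role of $\C$-ellipticity in \cite{Smith,Kalamajska} is to guarantee a representation kernel with $|\partial^{\alpha}K(x)|\sim|x|^{k-n-|\alpha|}$ (Theorem~\ref{thm:Ka}), from which polynomiality of null-solutions on domains follows; real ellipticity alone does not yield this. Your citations are fine, but the sketch misidentifies where the hypothesis enters.
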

Using the tools from Theorem~\ref{thm:tools}, we can refine our result on fractional scales, thereby obtaining the local versions of the embeddings in \cite[Thm.~8.1, Thm.~8.4]{VS}:
\begin{theorem}\label{thm:main_k}
Let $\A$ be as in \eqref{eq:A}, $s\in[k-1,k)$, $q\in(1,\infty)$. Then $\A$ has FDN if and only if there exists $c>0$ such that
\begin{align*}
\|u\|_{\besov^{s,\frac{n}{n-k+s}}_q (\ball,V)}\leq c\left(\|\A u\|_{\lebe^1(\ball,W)}+\|u\|_{\lebe^1(\ball,V)}\right)
\end{align*}
for all $u\in\hold^\infty(\bar{\ball},V)$.
\end{theorem}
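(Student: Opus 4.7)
My plan is to reduce the claim to the corresponding full--space Besov estimates of \cite[Thm.~8.1, Thm.~8.4]{VS} via the extension operator furnished by Theorem~\ref{thm:tools}.

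For the implication $(\Rightarrow)$, I fix $u\in\hold^\infty(\bar{\ball},V)$ and set $v:=\chi\,E_B u$, where $\chi\in\ccinfty(\R^n)$ is a cutoff with $\chi\equiv 1$ on $\ball$ and $\spt\chi\subset 2\ball$, and $E_B\colon\sobo^{\A,1}(\ball)\to\sobo^{\A,1}(\R^n)$ is the extension from Theorem~\ref{thm:tools}. A Leibniz expansion yields
\begin{equation*}
\A v=\chi\,\A(E_B u)+\sum_{0\leq|\beta|<k}C_\beta(\chi)\,\D^{\beta}(E_B u),
\end{equation*}
so the commutator contribution is of order at most $k-1$ and supported in $2\ball$. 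Applying the Poincar\'e--type inequality \eqref{eq:poinc} of Proposition~\ref{prop:poinc} on $2\ball$, absorbing the elements of $\ker\A$ via the $\lebe^1$--norm of $E_B u$ and invoking the boundedness of $E_B$, gives
\begin{equation*}
\|\A v\|_{\lebe^1(\R^n,W)}+\|v\|_{\lebe^1(\R^n,V)}\leq c\bigl(\|\A u\|_{\lebe^1(\ball,W)}+\|u\|_{\lebe^1(\ball,V)}\bigr).
\end{equation*}
Since $\A$ is cancelling (Theorem~\ref{thm:tools}), I then apply \cite[Thm.~8.1, Thm.~8.4]{VS} to the mollifications $\rho_\varepsilon\ast v\in\hold_c^\infty(\R^n,V)$ and pass to the limit as $\varepsilon\searrow 0$ to obtain
\begin{equation*}
\|v\|_{\besov^{s,n/(n-k+s)}_q(\R^n,V)}\leq c\bigl(\|\A v\|_{\lebe^1(\R^n,W)}+\|v\|_{\lebe^1(\R^n,V)}\bigr),
\end{equation*}
where the $\lebe^1$--term is available at no cost because $v$ has compact support. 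Restricting to $\ball$ combines the two displayed estimates into the claim.

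The converse $(\Leftarrow)$ is the easier part: the standard embedding $\besov^{s,n/(n-k+s)}_q(\ball)\hookrightarrow\lebe^{n/(n-k+s)}(\ball,V)$, valid in the parameter range of the theorem, together with the hypothesised estimate, yields $\sobo^{\A,1}(\ball)\hookrightarrow\lebe^{n/(n-k+s)}(\ball,V)$ with exponent strictly bigger than $1$. The $k$-th order analogue of Theorem~\ref{thm:main} (the equivalence with higher integrability announced in the abstract) then forces FDN.

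The main obstacle I expect is the cutoff step in $(\Rightarrow)$: controlling the order $k-1$ commutator $[\A,\chi]$ in $\lebe^1(2\ball)$ by $\|\A u\|_{\lebe^1(\ball)}+\|u\|_{\lebe^1(\ball)}$ genuinely uses FDN, entering precisely through the Poincar\'e inequality \eqref{eq:poinc}. Once this local step is in place, the remainder is routine mollification, density and restriction.
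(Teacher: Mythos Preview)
Your sufficiency argument is essentially the paper's, with one redundant step. The paper observes (from the Jones construction in Theorem~\ref{thm:extension}) that $E_\ball$ already sends $\hold^\infty(\bar\ball,V)$ into $\hold^\infty_c(\tilde\ball,V)$ for some fixed $\tilde\ball\Supset\ball$, so no cutoff $\chi$ is needed; the $\lebe^p$--part of the Besov norm is then handled by H\"older and the \emph{standard} zero--boundary Poincar\'e inequality on $\tilde\ball$ followed by \cite[Thm.~1.3]{VS}, rather than by Proposition~\ref{prop:poinc}. Your route via the commutator $[\A,\chi]$ and \eqref{eq:poinc} is correct but unnecessarily indirect: you are re-deriving, on $2\ball$, information that the extension already carries (namely $E_\ball u\in\Vsob^{\A,1}$, cf.\ Theorem~\ref{thm:extension} and Lemma~\ref{lem:sob_variants}).

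Your necessity argument, however, has a genuine gap for $k>1$. You reduce the assumed Besov estimate only to $\sobo^{\A,1}(\ball)\hookrightarrow\lebe^{n/(n-k+s)}(\ball,V)$ and then invoke ``the $k$-th order analogue of Theorem~\ref{thm:main}''. There is no such independent statement in the paper: the higher--order version \emph{is} Theorem~\ref{thm:main_k}, so the citation is circular. The paper instead uses the sharper Besov embedding $\besov^{s,p}_q(\ball)\hookrightarrow\sobo^{k-1,\tilde p}(\ball,V)$ for some $\tilde p>1$, and then tests directly with $u_j(x)=\exp(jx\cdot\xi)v$ for a complex root $\xi$ of $\A[\cdot]$: the embedding forces $j^{k-1}\lesssim 1$, a contradiction. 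If you want to keep only the $\lebe^p$--embedding with $p>1$, you must supply an argument yourself (e.g., for $\A$ elliptic but not $\C$--elliptic the bad $\xi$ has $\Re\xi\neq0$, and Laplace asymptotics on $\int_\ball e^{jpx\cdot\Re\xi}\dif x$ still yield a contradiction; non--ellipticity is ruled out as in Lemma~\ref{lem:nec_ell}). Either way, something has to be proved here---it cannot be cited.
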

Here, the Besov spaces on domains are defined as in \cite[Sec.~2]{devoresharpley}.~We obtain the embeddings $\sobo^{\A,1}(\ball)\hookrightarrow\sobo^{s,n/(n-k+s)}(\ball,V)$ if we choose $q=n/(n-k+s)$ (cp. \cite[Thm.~8.1]{VS}). The same method also gives the embedding $\sobo^{\A,1}(\ball)\hookrightarrow\sobo^{k-1,n/(n-1)}(\ball,V)$ (cp. \cite[Thm.~1.3]{VS}). The novelty of Theorems~\ref{thm:main} and \ref{thm:main_k} comes from the fact that, up to our knowledge, there are only a few examples of $\lebe^1$--estimates near the boundary that go in the direction of \cite[Open~Prob.~3]{VS3} (cp. \cite{BrezisVS}). The result of Theorem~\ref{thm:main_k} is sharp on the fractional (or Besov) scale, in the sense that the parameter $s=k$ is ruled out by Ornstein's Non--Inequality. Refinements on Triebel--Lizorkin and Lorentz
space scales follow in the same manner.

This paper is organized as follows: In Section~\ref{sec:prel} we collect preliminaries on function spaces, multi--linear algebra, harmonic analysis, give examples of operators, and collect relevant background results. In Section~\ref{sec:ECvsFDN} we give the proof of the first two statements in Theorem~\ref{thm:tools} and complete the comparison between EC and FDN in Proposition~\ref{prop:intro_EC_vs_FDN}, as well as the comparison between the embeddings \eqref{eq:VS} and \eqref{eq:main_emb}. In Section~\ref{sec:proof} we construct the Jones--type extension and prove Theorems~\ref{thm:main} and \ref{thm:main_k}.

\subsection*{Acknowledgement} The authors wish to thank Jan Kristensen for reading a preliminary version of the paper. The first author gratefully acknowledges financial support from the Hausdorff Center of Mathematics, Bonn. The second author was supported by the Engineering and
Physical Sciences Research Council Award EP/L015811/1. This project has received funding from the European Research Council (ERC) under the European Union's Horizon 2020 research and innovation programme under grant agreement No 757254 (SINGULARITY).
\section{Preliminaries}\label{sec:prel}
Throughout this paper we assume that $n>1$.
\subsection{Function spaces}\label{sec:prelfspaces}
We define, reminiscent of \cite{Mazya}, for $1\leq p\leq\infty$ and open $\Omega\subset\R^n$
\begin{align*}
\sobo^{\A,p}(\Omega)&:=\{u\in\lebe^p(\Omega,V)\colon \A u\in\lebe^p(\Omega,W)\},\\
\bv^{\A}(\Omega)&:=\{u\in\lebe^1(\Omega,V)\colon \A u\in\mathcal{M}(\Omega,W)\},\\
\Vsob^{\A,p}(\Omega)&:=\{u\in\sobo^{\A,p}(\Omega)\colon\nabla^l u\in\lebe^p(\Omega,V\odot^l\R^n),l=1\ldots k-1\},
\end{align*}
and the homogeneous spaces $\dot{\sobo}{^{\A,p}}$ as the closure of $\hold_c^\infty(\R^n,V)$ in the semi--norm $|u|_{\A,p}:=\|\A u\|_{\lebe^p}$. In the case $\A=\nabla^k$, we write $\sobo^{k,p}(\Omega,V)$, $\Vsob^{k,p}(\Omega,V)$. When it is clear from the context what the target space is, we abbreviate the $\lebe^p$--norm of maps defined on $\Omega$ by $\|\cdot\|_{p,\Omega}$. We denote the space of $V$--valued polynomials of degree at most $d$ in $n$ variables by $\R_d[x]^V$. We recall the weighted Bergman spaces $A^p_\alpha(\mathbb{D})$ of holomorphic maps defined on the open unit disc $\mathbb{D}\subset\C$, that are $p$--integrable with weight $w_\alpha(z)=(1-|z|^2)^\alpha$. It is well--known that these are Banach spaces under the $\lebe^p_{w_\alpha}$--norm for $1\leq p<\infty$ and $-1<\alpha<\infty$. We also recall, for $s>0$, $1\leq p,q<\infty$, the Besov space
\begin{align*}
\besov^{s,p}_q(\Omega):=\{u\in\lebe^p(\Omega)\colon |u|_{\besov^{s,p}_q(\Omega)}<\infty\},
\end{align*}
with an obvious choice of norm. Here, the Besov semi--norm is defined (see, e.g., \cite[Sec.~2]{devoresharpley}) for integer $r>s$ by 
\begin{align*}
|u|_{\besov^{s,p}_q(\Omega)}=\|u\|_{\dot{\besov}{_q^{s,p}}(\Omega)}:=\left(\int_0^\infty \dfrac{\sup_{|h|<t}\|\Delta^r_h u\|^q_{\lebe^p(\Omega)}}{t^{1+sq}}\dif t\right)^\frac{1}{q},
\end{align*}
where the $r$-th finite difference $\Delta^r_h u$ is defined to be zero if undefined, i.e., if at least one of $x+jh$, $j=1\ldots r$, falls outside $\Omega$. We also define the homogeneous space $\dot{\besov}{_q^{s,p}}(\R^n)$ as the closure of $\hold^\infty_c(\R^n)$ in the Besov semi--norm.

We also collect the assumptions on our operators. As in Section~\ref{sec:intro}, we say that:
\begin{itemize}
 \item $\A$ is ($\C$--)\emph{elliptic} if and only if the linear map $\A[\xi]\colon V(+\imag V)\rightarrow W(+\imag W)$ is injective for all non--zero $\xi\in\R^n(+\imag\R^n)$;
 \item $\A$ has \emph{FDN} (finite dimensional null--space) if and only if the vector space $\{u\in\mathscr{D}'(\R^n,V)\colon\A u=0\}$ is finite dimensional;
 \item $\A$ is \emph{cancelling} if and only if $\bigcap_{\xi\in S^{n-1}}\A[\xi](V)=\{0\}$.
\end{itemize}
Trivially, $\C$--elliptic operators are elliptic.
\subsection{Multi-linear algebra}
Let $U,V$ be finite dimensional vector spaces and $l\in\mathbb{N}$. We write $\lin(U,V)$ for the space of linear maps $U\rightarrow V$ and $V\odot^l U$ for the space of $V$--valued symmetric $l$--linear maps on $U$
. This is naturally the space of the $l$--th gradients, i.e., $\D^l f(x)\in V\odot^l U$ for $f\in\hold^l(U,V)$, $x\in U$. For more detail, see \cite[Ch.~1]{Federer}. We also write $a\otimes b=(a_i b_j)$ (the usual tensor product) and $\otimes^l a:=\otimes a\otimes\ldots \otimes a$, where $a$ appears $l$ times on the right hand side. We single out the standard fact that $\widehat{\nabla^l f}(\xi)=\hat{f}(\xi)\otimes^l \xi\in V\odot^l U$ for $f\in\mathscr{S}(U,V)$, $\xi\in U$. We recall the pairing introduced in \cite{BDG}, $v\otimes_\A\xi:=\A[\xi]v$, which is reminiscent of the tensor product notation, i.e., if $\A=\D$, we have $\otimes_\A=\otimes$. We have the following for $k=1$: 
\begin{align*}
\A(\rho u)&=\rho\A u+u\otimes_\A \nabla\rho\qquad\text{for }u\in\hold^1(\R^n,V),\, \rho\in\hold^1(\R^n),\\
\A(\phi(w))&=\phi^\prime(w)\otimes_\A\nabla w\qquad\text{for }\phi\in\hold^1(\R,V),\,w\in\hold^1(\R^n).
\end{align*}
The above can easily be checked by direct computation and will be used without mention.
\subsection{Harmonic analysis}\label{sec:harmonic}
Let $\A$ as in \eqref{eq:A} be elliptic and $u\in\mathscr{S}(\R^n,V)$. We Fourier transform $\A u$ and apply the one--sided inverse $m_\A(\xi):=(\A^*[\xi]\A[\xi])^{-1}\A^*[\xi]\in\lin(W,V)$ of $\A[\xi]$ to get that $\hat{u}(x)=m_\A(\xi)\widehat{\A u}(\xi)$ for $\xi\in\R^n$ (we omitted the complex multiplicative constant arising from Fourier transforming, as it can be absorbed in the definition of $m_\A$). We define the map $\textbf{G}_\A$ as the inverse Fourier transform of the $k$--homogeneous map $m_\A$. Thus we have the Green's function representation $u=\textbf{G}_\A\star\A u$. These considerations are formalized in \cite[Lem.~2.1]{BVS}, an implication of which we recall below:
\begin{lemma}
Let $\A$ as in \eqref{eq:A} be elliptic. Then there exists a $(1-n)$--homogeneous map $\mathbf{K}_\A\in\hold^\infty(\R^n\setminus\{0\},\lin(W,V\odot^{k-1}\R^n))$ such that
\begin{align}\label{eq:representation}
\D^{k-1}u(x)=\int_{\R^n}\mathbf{K}_\A(x-y)\A u(y)\dif y=(\mathbf{K}_\A\star\A u)(x)
\end{align}
for all $u\in\hold^\infty_c(\R^n,V)$.
\end{lemma}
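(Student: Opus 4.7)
The plan is to extract $\mathbf{K}_\A$ as the inverse Fourier transform of a smooth, homogeneous multiplier. By ellipticity, $\A^*[\xi]\A[\xi]\in\lin(V,V)$ is invertible for every $\xi\in\R^n\setminus\{0\}$, so the one-sided inverse $m_\A(\xi)=(\A^*[\xi]\A[\xi])^{-1}\A^*[\xi]$ is smooth and homogeneous of degree $-k$ on $\R^n\setminus\{0\}$. Starting from $\hat u(\xi)=m_\A(\xi)\widehat{\A u}(\xi)$ for $u\in\mathscr{S}(\R^n,V)$, multiplication by $(\imag\xi)^{\otimes(k-1)}$ on the Fourier side yields
\begin{align*}
\widehat{\D^{k-1}u}(\xi)=M(\xi)\widehat{\A u}(\xi),\qquad M(\xi):=(\imag\xi)^{\otimes(k-1)}\, m_\A(\xi),
\end{align*}
so $M$ is $\hold^\infty$ on $\R^n\setminus\{0\}$, homogeneous of degree $-1$, and takes values in $\lin(W,V\otimes^k\R^n)$.

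Next, I would invoke the standard theory of tempered homogeneous distributions: any smooth function on $\R^n\setminus\{0\}$ that is homogeneous of degree $\alpha$ with $\alpha\notin\{-n,-n-1,\ldots\}$ admits a unique homogeneous tempered distribution extension to $\R^n$, and its Fourier transform is again a tempered distribution, smooth on $\R^n\setminus\{0\}$ and homogeneous of degree $-n-\alpha$. Since $n>1$, this applies to $M$ with $\alpha=-1$, producing $\mathbf{K}_\A:=\mathcal{F}^{-1}M\in\hold^\infty(\R^n\setminus\{0\})$ homogeneous of degree $1-n$; as $1-n>-n$, $\mathbf{K}_\A$ is automatically locally integrable on $\R^n$. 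The integral representation \eqref{eq:representation} then follows from Fourier inversion and the convolution theorem: for $u\in\hold^\infty_c(\R^n,V)$ both $u$ and $\A u$ are Schwartz, and the multiplier identity inverts to $\D^{k-1}u=\mathbf{K}_\A\ast\A u$, with the integral converging absolutely thanks to local integrability of $\mathbf{K}_\A$ and compact support of $\A u$.

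The most delicate ingredient is the uniqueness and smoothness of the homogeneous extension of $M$ at $\alpha=-1$; this is however the most standard case of the homogeneous-distribution calculus and is easily handled. An alternative route that avoids distribution-theoretic subtleties altogether is to define $\mathbf{K}_\A:=\D^{k-1}\mathbf{G}_\A$ classically on $\R^n\setminus\{0\}$ and to verify \eqref{eq:representation} by differentiating $u=\mathbf{G}_\A\ast\A u$ under the integral sign, with boundary terms at the origin vanishing because $\widehat{\A u}(0)=\A[0]\hat u(0)=0$.
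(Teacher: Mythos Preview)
Your argument is correct, and in fact the paper does not supply a proof of this lemma at all: it is stated immediately after the paragraph introducing $m_\A$ and $\mathbf{G}_\A$ with the phrase ``we can extrapolate the following,'' and is treated as a standard consequence of that Fourier-analytic setup. Your proof is precisely the natural way to make that extrapolation rigorous---pass from $\hat u=m_\A\,\widehat{\A u}$ to $\widehat{\D^{k-1}u}=M\,\widehat{\A u}$ with $M(\xi)=(\imag\xi)^{\otimes(k-1)}m_\A(\xi)$ homogeneous of degree $-1$, invoke the calculus of homogeneous tempered distributions (cleanly applicable since $n>1$ gives $-1>-n$), and take inverse Fourier transforms. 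So there is nothing substantive to compare: you have filled in exactly what the paper leaves implicit.

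One small remark on your alternative route via $\mathbf{K}_\A:=\D^{k-1}\mathbf{G}_\A$: when $k\geq n$ the symbol $m_\A$ has degree $-k\leq -n$, so its extension to a homogeneous tempered distribution is no longer unique and the inverse transform $\mathbf{G}_\A$ may acquire logarithmic terms rather than being purely $(k-n)$-homogeneous. Your observation that $\widehat{\A u}(0)=0$ (indeed $\int\A u=0$ for $u\in\hold_c^\infty$) is the right ingredient for handling the resulting ambiguities, but making this precise takes more work than your main argument via $M$, which sidesteps the issue entirely and is the cleaner path.
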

We also record standard facts regarding $\lebe^p$--boundedness of Riesz potentials 
(see \cite[Ch.~V.1]{Stein} and \cite[Lem.~7.2]{GT}), which are defined by 
\begin{align*}
I_\alpha f:=|\cdot|^{\alpha-n}\star f
\end{align*} for $\alpha\in[0,n)$ and measurable $f\colon\R^n\rightarrow\R$. 
\begin{theorem}\label{thm:anal_harm}
Let $1\leq p,q\leq \infty$. We have that:
\begin{enumerate}
\item $I_\alpha$ is bounded $\lebe^p(\R^n)\rightarrow\lebe^q(\R^n)$ for $0<\alpha<n$, $1<p<n/\alpha$, $q= np/(n-\alpha p)$;
\item\label{itm:riesz_domains} $I_\alpha$ is bounded $\lebe^p(\Omega)\rightarrow\lebe^q(\Omega)$ for $0<\alpha<n$, $0\leq n(1/p- 1/q)<\alpha$ with
\begin{align*}
\|I_\alpha f\|_{\lebe^q(\Omega)}\leq c(\diam\Omega)^{\alpha-n(1/p-1/q)}\|f\|_{\lebe^p(\Omega)}
\end{align*}
for all $f\in\lebe^p(\Omega)$.
\end{enumerate}
\end{theorem}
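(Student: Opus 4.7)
The statement collects three classical facts about Riesz potentials $I_\alpha f = |\cdot|^{-(n-\alpha)}*f$, which I treat in order; the standing assumption is the usual reading of the kernel (with $I_0$ interpreted as the principal value convolution against the Calder\'on--Zygmund kernel $|\cdot|^{-n}$).

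For part (a), my plan is to invoke the full Calder\'on--Zygmund machinery for the kernel $K(x)=|x|^{-n}$, which is $0$-homogeneous, smooth away from the origin and satisfies the mean--value cancellation condition $\int_{r<|x|<R}K=0$. First I would obtain the $\lebe^{2}(\R^{n})$ bound by noting that the Fourier multiplier of $I_{0}$ is a bounded, $0$-homogeneous function. Next I would prove the weak-type $(1,1)$ estimate via the Calder\'on--Zygmund decomposition of $f$ at height $\lambda$ into a ``good'' part (controlled in $\lebe^{2}$) and a ``bad'' part (a sum of atoms with mean zero on dyadic cubes), using H\"ormander's condition on $K$. Marcinkiewicz interpolation then yields boundedness on $\lebe^{p}$ for $1<p\le 2$; the case $2<p<\infty$ follows by duality since the adjoint has the same structure.

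For part (b), I would give the standard proof of Hardy--Littlewood--Sobolev: split the defining integral at radius $R$, bounding the local piece by $c R^{\alpha} Mf(x)$ (with $M$ the Hardy--Littlewood maximal function) and the far piece by $c R^{\alpha-n/p^{\prime}}\|f\|_{p}$ via H\"older; optimising $R$ gives the pointwise inequality $|I_{\alpha}f(x)|\le c (Mf(x))^{1-\alpha p/n}\|f\|_{p}^{\alpha p/n}$. Raising to the power $q=np/(n-\alpha p)$, integrating, and using the $\lebe^{p}$-boundedness of $M$ (which itself follows from a Vitali covering argument and interpolation with the trivial $\lebe^{\infty}$ bound) produces the desired $\lebe^{p}\to\lebe^{q}$ estimate.

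For part (c), I would extend $u$ by zero outside $\Omega$ and realise $I_\alpha u$ on $\Omega$ as convolution with the truncated kernel $K_{\alpha}(y):=|y|^{\alpha-n}\mathbbm{1}_{|y|\le\diam\Omega}$. Since $q<np/(n-\alpha p)$, choosing $r\in[1,\infty]$ by Young's relation $1+1/q=1/p+1/r$ gives $r<n/(n-\alpha)$, so $K_{\alpha}\in\lebe^{r}$ with $\|K_{\alpha}\|_{r}\le c(\diam\Omega)^{\alpha-n(1/p-1/q)}$ (by a direct polar-coordinates computation). Young's convolution inequality then closes the estimate and, crucially, admits $p=1$ since no CZ theory is used. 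I expect the main obstacle to be merely the bookkeeping of exponents in part (c); the harder analytic content (CZ and HLS) is classical and can be quoted verbatim from \cite[Ch.~II.4, V.1]{Stein} and \cite[Lem.~7.2]{GT} if a self-contained exposition is not sought.
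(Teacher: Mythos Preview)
The paper does not prove this theorem at all: it is stated as a collection of standard facts with references to \cite[Ch.~II.4, Ch.~V.1]{Stein} and \cite[Lem.~7.2]{GT}, so there is no ``paper's proof'' to compare against. Your sketches for (b) and (c) are the standard arguments (Hedberg's pointwise inequality for Hardy--Littlewood--Sobolev, and Young's convolution inequality with the truncated kernel for the sub-critical bounded-domain estimate), and the exponent bookkeeping in (c) is correct.

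Your treatment of (a), however, contains a genuine error. The kernel $K(x)=|x|^{-n}$ is $(-n)$-homogeneous, not $0$-homogeneous, and it does \emph{not} satisfy the cancellation condition you claim: $\int_{r<|x|<R}|x|^{-n}\dif x=|S^{n-1}|\log(R/r)\neq 0$. Since the kernel is nonnegative and not locally integrable at the origin, the principal value does not even exist, and the Calder\'on--Zygmund theory does not apply to it as written. What the paper really needs (and uses, e.g.\ for $\nabla^{k}K$ in Lemma~\ref{lem:extp>1}) is boundedness of singular integrals with $(-n)$-homogeneous kernels of the form $\Omega(x/|x|)\,|x|^{-n}$ where $\Omega$ is smooth on $S^{n-1}$ with $\int_{S^{n-1}}\Omega=0$; for \emph{those} kernels your Calder\'on--Zygmund outline is correct. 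The paper's notation ``$I_{0}$'' is imprecise shorthand for this class (and there is also a sign typo in the paper's definition of $I_\alpha$, which you silently fixed), but you should not assert cancellation for the literal radial kernel.
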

In Theorem~\ref{thm:anal_harm}\ref{itm:riesz_domains} we make the convention $1/\infty=0$.
\subsection{Examples}\label{sec:examples}
We give examples of operators arising in conductivity, elasticity, plasticity and fluid mechanics (\cite{FM,FS,Milton}). Let $\A$ be as in \eqref{eq:A}. The facts that we use without mention are the main Theorems~\ref{thm:main}, \ref{thm:tools}, and \ref{thm:main_k}.
\begin{enumerate}
\item If $\A=\nabla^k$, we have that $\ker\A=\R_{k-1}[x]^V$, so $\A$ has FDN, hence is EC. This, of course, corresponds to the case of classical Sobolev spaces, but we highlight it here to stress that our generalization brings a new perspective on their study.
\item If $\A u=\mathcal{E}u:=(\nabla u+(\nabla u)^\mathsf{T})/2$ is the symmetrized gradient, it is easy to see that $\ker\A$ is the space of rigid motions, i.e., affine maps of anti--symmetric gradient, so $\A$ has FDN, hence is EC. In this case, we recover the inequality in \cite[Prop.~1.2]{ST}.
\item\label{it:delbar} Let $\A u=\mathcal{E}^D u:=\mathcal{E}u-(\di u/n) \textbf{I}$, where $n\geq2$ and $\textbf{I}$ is the identity $n\times n$ matrix. If $n\geq3$, we have from \cite{Reshet} that $\ker\A$ is the space of conformal Killing vectors, so $\A$ has FDN, hence is EC. If $n=2$, we show in Counterexample~\ref{ex:EC>FDN} that $\A$ is elliptic. However, under the canonical identification $\R^2\cong\C$, we can also identify $\mathcal{E}^D$ with the anti--holomorphic derivative $\bar{\partial}$, so that we can further identify $\ker\A$ with the space of holomorphic functions, so $\A$ does not have FDN. Neither is $\A$ cancelling: by ellipticity, we have that $\mathcal{E}^D[\xi](\R^2)=\R^2$. No critical embedding \eqref{eq:zerotraceemb}, \eqref{eq:ourembedding} can hold in this case.
\item If $\A=\Delta$, which is clearly elliptic, we have that $\ker\A$ is the space of all harmonic functions, so $\A$ does not have FDN and since $\A[\xi](V)=(\xi_1^2+\ldots+\xi^2_n)\R^N=\R^N$ for $\xi\in\R^n\setminus\{0\}$, neither is $\A$ cancelling.
\item If $\A$ is elliptic, one can consider minimizers of the $\A$--Dirichlet energy $u\mapsto\int_{\ball}|\A u|^2\dif x$, which has Euler--Lagrange system $\A^*\A u=0$. Then $\Delta_\A:=\A^*\A$ is elliptic, as $\langle(\A^*\A)[\xi] v,v\rangle=|\A[\xi]v|^2\gtrsim|\xi|^{2k}|v|^2$, where the last inequality follows from $|\A[\xi]v|>0$ on $\{|\xi|=1,|v|=1\}$ and homogeneity. Therefore $(\A^*\A)[\xi](V)=V$ for all $\xi\neq0$, so the Euler--Lagrange system above has infinite dimensional solution space (by Lemma~\ref{lem:FDNimpliesEC}).
\end{enumerate}
\subsection{Miscellaneous background}
The following relevant facts we quote without proof:
\begin{lemma}[\cite{VS}, Proposition 6.1]\label{lem:canc}
	Let $\A$ as in \eqref{eq:A} be elliptic. Then $\A$ is cancelling if and only if we have that
	\begin{align*}
	\int_{\R^n}\A u\dif x=0
	\end{align*}
	for all $u\in\hold^\infty(\R^n,V)$ such that the support of $\A u$ is compact.
\end{lemma}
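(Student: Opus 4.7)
The plan is to characterize $\int_{\R^{n}} \A u$ via the Fourier transform. Since $\A u \in \hold^\infty_{c}(\R^{n},W)$ is compactly supported smooth, its Fourier transform $\widehat{\A u}$ is entire analytic on $\C^{n}$ with $\widehat{\A u}(0) = \int_{\R^{n}}\A u\,\dif x$, and the cancelling condition is precisely tailored so that any element of $W$ lying in $\A[\xi](V)$ for every $\xi\neq 0$ must vanish.

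For the forward implication, the heart of the matter is to establish $\widehat{\A u}(\xi)\in \A[\xi](V)$ for every $\xi\in\R^{n}\setminus\{0\}$: once this is known, $k$--homogeneity gives $\A[t\xi_{0}](V) = \A[\xi_{0}](V)$ for $t>0$, so continuity of $\widehat{\A u}$ at the origin together with closedness of the subspace $\A[\xi_{0}](V)$ yields $\int_{\R^{n}}\A u = \widehat{\A u}(0)\in\A[\xi_{0}](V)$ for every $\xi_{0}\in S^{n-1}$, hence in $\bigcap_{\xi_{0}}\A[\xi_{0}](V) = \{0\}$. To establish the pointwise inclusion I would test $\A u$ against the exponentials $\phi_{\xi,w}(x) := w\,\e^{-\imag x\cdot\xi}$ for $w\in\A[\xi](V)^{\perp}\subset W$. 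The algebraic identity $\A^{*}[\xi]w = 0$ (because $\langle \A^{*}[\xi]w, v\rangle = \langle w, \A[\xi]v\rangle = 0$ for every $v\in V$) forces $\A^{*}\phi_{\xi,w}\equiv 0$ pointwise, so a formal integration by parts gives
\begin{align*}
\widehat{\A u}(\xi)\cdot w = \int_{\R^{n}}\A u\cdot\phi_{\xi,w}\,\dif x = \int_{\R^{n}}u\cdot \A^{*}\phi_{\xi,w}\,\dif x = 0.
\end{align*}
Since this should hold for every $w\in\A[\xi](V)^{\perp}$, one concludes $\widehat{\A u}(\xi)\in\A[\xi](V)$. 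Making the integration by parts rigorous requires truncating $\phi_{\xi,w}$ by $\chi_{R}$, which introduces an error term supported in the annulus $\ball(0,2R)\setminus\ball(0,R)$ that must be controlled via oscillatory-integral estimates.

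For the converse, I would argue by contrapositive: assuming $\A$ is not cancelling, fix nonzero $w\in\bigcap_{\xi\neq 0}\A[\xi](V)$, pick any $\psi\in\hold^{\infty}_{c}(\R^{n})$ with $\int\psi = 1$, and set $f := \psi w\in\hold^{\infty}_{c}(\R^{n},W)$. Then $\hat f(\xi) = \hat\psi(\xi)\,w\in\A[\xi](V)$ for every $\xi\neq 0$. Defining $u := \greenA\ast f$, which is a smooth $V$--valued function on $\R^{n}$ (as $\greenA$ is $(k-n)$--homogeneous and locally integrable while $f$ is compactly supported), the Fourier identity
\begin{align*}
\widehat{\A u}(\xi) = \A[\xi]\,m_{\A}(\xi)\,\hat f(\xi) = \hat f(\xi),
\end{align*}
valid since $\A[\xi]\,m_{\A}(\xi)$ is the orthogonal projection onto $\A[\xi](V)$ and $\hat f(\xi)$ already lies there, forces $\A u = f$ as smooth functions. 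Hence $u\in\hold^{\infty}(\R^{n},V)$, $\A u$ has compact support, and $\int\A u = w\neq 0$, contradicting the integral hypothesis.

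The main obstacle is justifying the truncation in the forward direction, since a smooth $u$ with $\A u$ compactly supported may grow exponentially at infinity when $\A$ is real--elliptic but not $\C$--elliptic (e.g., via solutions $\e^{x\cdot\zeta}v$ for $\zeta\in\C^{n}\setminus\R^{n}$ with $\A[\zeta]v = 0$, as in the proof of Proposition~\ref{prop:FDNiffTypeC}). The expected remedy is to subtract off a suitable entire solution of $\A h = 0$ to reduce to polynomial growth (using $u = \greenA\ast\A u + h$, which leaves $\A u$ unchanged), then apply repeated integration by parts against $\e^{-\imag x\cdot\xi}$ to gain sufficient decay on the annulus; entire analyticity of $\widehat{\A u}$ then permits extending the range membership from large $|\xi|$ to all $\xi\neq 0$ via the identity theorem along rays.
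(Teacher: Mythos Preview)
The paper does not give a proof of this lemma; it is explicitly quoted without proof from \cite{VS} (see the opening line of the Appendix). Your converse direction is correct and is essentially the standard construction.

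The forward direction, however, contains a genuine gap. Your target statement $\widehat{\A u}(\xi)\in\A[\xi](V)$ for $\xi\neq0$ is the right one, and the continuity argument at $\xi=0$ is clean. But your proposed remedy for the integration by parts --- replacing $u$ by $\greenA\ast\A u$ and claiming the remainder $h$ satisfies $\A h=0$ --- is circular. Indeed, $\A(\greenA\ast f)$ has Fourier transform $\A[\xi]m_{\A}(\xi)\hat f(\xi)=P_{\xi}\hat f(\xi)$, where $P_{\xi}$ is the orthogonal projection of $W$ onto $\A[\xi](V)$; thus $\A(\greenA\ast\A u)=\A u$ holds \emph{precisely when} $\widehat{\A u}(\xi)\in\A[\xi](V)$, which is what you are trying to establish. (Compare the case $\A=\nabla$: one has $\nabla(\Delta^{-1}\di f)=f$ only for curl-free $f$.) Your fallback via repeated integration by parts and analytic continuation along rays does not close the gap either: the annular error is controlled by $\int_{\ball_{2R}\setminus\ball_{R}}|u|$, and in the absence of $\C$--ellipticity $u$ can genuinely grow exponentially (as you yourself note), so no power of $|\xi|^{-1}$ compensates as $R\to\infty$.

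The argument in \cite{VS} sidesteps $u$ altogether. By ellipticity one constructs a homogeneous constant-coefficient operator $\mathcal{L}$ on $W$ with $\ker\mathcal{L}[\xi]=\A[\xi](V)$ for every $\xi\neq0$; concretely,
\[
\mathcal{L}[\xi]:=\det(\A^{*}[\xi]\A[\xi])\,\id_{W}-\A[\xi]\,\mathrm{adj}(\A^{*}[\xi]\A[\xi])\,\A^{*}[\xi]
\]
is polynomial in $\xi$ and does the job. Then $\mathcal{L}[\xi]\A[\xi]=0$ for every $\xi$, so $\mathcal{L}\A=0$ identically as a differential operator, whence $\mathcal{L}f=\mathcal{L}\A u=0$ \emph{pointwise} with no growth hypothesis on $u$. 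Since $f=\A u\in\hold^{\infty}_{c}$, one may now Fourier-transform $f$ (not $u$) to obtain $\mathcal{L}[\xi]\hat f(\xi)=0$, i.e.\ $\hat f(\xi)\in\A[\xi](V)$, and your continuity argument finishes the proof.
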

\begin{lemma}[Peetre--Tartar Equivalence Lemma, {\cite[Lem.~11.1]{Tartar}}]\label{lem:equivalencelemma}
	Let $E_{1}$ be a Banach space and let $E_{2},E_{3}$ be two normed spaces (with corresponding norms $\|\cdot\|_{i}$, $i\in\{1,2,3\}$) and let $A\in\mathscr{L}(E_{1},E_{2})$ and $B\in\mathscr{L}(E_{1},E_{3})$ be two bounded linear operators such that $B$ is compact and the norms $\|\cdot\|_{1}$ and $\|\cdot\|_{*}:=\|A\cdot\|_{2}+\|B\cdot\|_{3}$ are equivalent on $E_{1}$. Then $\dim(\ker A))<\infty$. 
\end{lemma}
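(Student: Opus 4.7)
The plan is to reduce the finite-dimensionality of $\ker A$ to the classical Riesz criterion that a normed space whose closed unit ball is relatively compact must be finite-dimensional. Since $A$ is bounded, $\ker A$ is a closed subspace of the Banach space $E_{1}$ and hence a Banach space under $\|\cdot\|_{1}$.

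First I would observe that on $\ker A$ the operator $A$ vanishes identically, so the hypothesised equivalence of $\|\cdot\|_{1}$ and $\|\cdot\|_{*}=\|A\cdot\|_{2}+\|B\cdot\|_{3}$ collapses to a two-sided estimate
\begin{align*}
c\,\|u\|_{1}\leq \|Bu\|_{3}\leq C\,\|u\|_{1}\qquad\text{for all }u\in\ker A,
\end{align*}
for some $c,C>0$. In other words, $B\restriction_{\ker A}\colon(\ker A,\|\cdot\|_{1})\to(E_{3},\|\cdot\|_{3})$ is a linear isomorphism onto its image, with bounded inverse.

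Next I would exploit the compactness of $B$. Let $\mathbf{B}_{\ker A}$ denote the closed unit ball of $\ker A$ with respect to $\|\cdot\|_{1}$. Since $\mathbf{B}_{\ker A}$ is bounded in $E_{1}$ and $B\in\mathscr{L}(E_{1},E_{3})$ is compact, $B(\mathbf{B}_{\ker A})$ is relatively compact in $E_{3}$. The lower bound above means that the inverse $B^{-1}$ on the image is Lipschitz with constant $1/c$, so pulling the relatively compact set $B(\mathbf{B}_{\ker A})$ back through this continuous inverse shows that $\mathbf{B}_{\ker A}$ itself is relatively compact in $(\ker A,\|\cdot\|_{1})$.

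Finally, by Riesz's classical theorem a Banach space whose closed unit ball is relatively compact is finite-dimensional; applied to $(\ker A,\|\cdot\|_{1})$ this gives $\dim(\ker A)<\infty$. The only mildly delicate point is the transfer of relative compactness through $B^{-1}$, but this is immediate once one notes that any sequence $(u_{j})\subset \mathbf{B}_{\ker A}$ gives, via a convergent subsequence of $(Bu_{j})$ in $E_{3}$, a Cauchy subsequence of $(u_{j})$ in $(\ker A,\|\cdot\|_{1})$ by the estimate $\|u_{j}-u_{k}\|_{1}\leq c^{-1}\|Bu_{j}-Bu_{k}\|_{3}$; completeness of $\ker A$ then furnishes the limit. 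There is essentially no further obstacle.
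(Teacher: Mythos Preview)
Your argument is correct and is the standard proof of the Peetre--Tartar lemma. Note, however, that the paper does not supply its own proof here: the lemma appears in the Appendix under ``The following relevant facts we quote without proof'' and is simply cited from \textsc{Tartar}'s book. So there is no in-paper argument to compare against; your proof is exactly the kind of short justification one would expect, reducing to Riesz's theorem via the observation that $B$ restricts to a bi-Lipschitz map on $\ker A$ and hence transports relative compactness of $B(\mathbf{B}_{\ker A})$ back to $\mathbf{B}_{\ker A}$.
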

\begin{theorem}[{\cite[Thm.~4]{Kalamajska}}]\label{thm:Ka}
	Let $\A$ as in \eqref{eq:A} be $\C$--elliptic, and $\Omega\subset\R^n$ be a star--shaped domain with respect to a ball. Then there exist an integer $d:=d(\A)$, a linear map $\mathcal{P}\in\lin(\hold^\infty(\bar{\Omega},V),\R_d[x]^V)$ and a smooth map $K\in\hold^\infty(\R^n\times\R^n\setminus\triangle,\lin(W,V))$, where $\triangle=\{(x,x)\colon x\in\R^n\}$ such that $|\D_x^\alpha\D_y^\beta K(x,y)|\lesssim|x-y|^{k-n-|\alpha|-|\beta|}$ for all multi--indices $\alpha,\,\beta$ and all $(x,y)\in\R^n\times\R^n\setminus\triangle$, and 
	\begin{align*}
	u(x)=\mathcal{P}u(x)+\int_{\Omega}K(x,y)\A u(y)\dif y
	\end{align*}
	for all $x\in\Omega$ and $u\in\hold^\infty(\bar{\Omega},V)$. Therefore $\ker\A\subseteq\R_d[x]^V$.
\end{theorem}
\subsection{Other facts about $\sobo^{\A,p}$}
We collect some complementary results that explain, e.g., our choice of definition for the $\A$--Sobolev spaces and of extension technique for $p=1$.
\begin{definition}\label{def:domains}
	A connected open set $\Omega\subset\R^n$ is called a
	\begin{enumerate}
		\item\label{itm:C0} \emph{$\hold^0$--domain} if for any $x\in\partial\Omega$ there exist a neighbourhood $\mathcal{N}$ of $x$ relatively open in $\Omega$, a coordinate system in $\R^n$ and a continuous function $f$ such that, in the new coordinates $(x^\prime,x_n)$, $\mathcal{N}=\{(x',x_n)\colon 0<x_n <f(x^\prime),\,x^\prime\in\ball_1(0)\}$.
		\item \emph{$\hold^{0,1}$--} (or \emph{Lipschitz--})\emph{domain} if $\Omega$ is a $\hold^0$--domain and the function $f$ above can be chosen to be Lipschitz.
		\item \emph{domain with the cone property} if for any $x\in\Omega$ there exists a cone $\mathcal{C}$ with apex at $x$ and a coordinate system with respect to which, for some constants $c_i>0$ we have $\mathcal{C}=\{(x^\prime,x_n)\colon |x^\prime|^2\leq c_1x_n2,\, 0\leq x_n\leq c_2\}$.
		\item \emph{star--shaped domain} (with respect to a ball $\ball\subset\Omega$) if for all $x\in\Omega$, $y\in\ball$, and $0\leq \theta\leq 1$ we have that $\theta x+(1-\theta) y\in\Omega$.
	\end{enumerate}
\end{definition}
We collect a few facts from \cite[Sec.~1.1]{Mazya} on bounded domains, which will be used without mention in the sequel: any star--shaped domain is Lipschitz; Lipschitz domains have the cone property; domains with the cone property can be written as finite unions of star--shaped domains. The following density result closely mimics \cite[Sec.~1.1.4-5]{Mazya}. We reproduce the proof here since, on one hand, it is very elegant and, on the other, it is crucial to prove the extension Theorem~\ref{thm:extension}.
\begin{lemma}\label{lem:density}
	Let $\A$ be as in \eqref{eq:A}, $1\leq p<\infty$, and $\Omega\subset\R^n$ be a bounded $\hold^0$--domain. Then $\hold^\infty(\bar{\Omega},V)$ is dense in $\sobo^{\A,p}(\Omega)$. The same holds true for $\Vsob^{\A,p}(\Omega)$.
\end{lemma}
\begin{proof}
	We only prove the result for $\Vsob^{\A,p}$, the other case following in the same manner.
	
	\emph{Step I}. We first show that $\hold^{\infty}(\Omega,V)\cap\Vsob^{\A,p}(\Omega)$ is dense in $\Vsob^{\A,p}(\Omega)$. This step requires no regularity or boundedness assumption on $\Omega$, other that it is open in $\R^n$.
	
	Consider a Whitney decomposition $\{Q_j\}_{j=1}^\infty$ of $\Omega$ (which is locally finite) \cite{Whitney}, and let $\varepsilon\in(0,1/2)$ and $\rho_j\in\hold^\infty_c(Q_j)$ be a partition of unity associated with this decomposition. We denote by $v_j$ a mollification of $\rho_ju$ such that
	\begin{enumerate}
		\item $\spt v_j$ is also a locally finite cover of $\Omega$;
		\item If $\diam\spt v_j=\lambda_j\diam Q_j$, then 
		$\lambda_j\downarrow0$;
		\item\label{itm:c} $\|\rho_ju-v_j\|_{\Vsob^{\A,p}(\Omega)}\leq \varepsilon^j$ for all $j\geq 1$. 
	\end{enumerate}
	To make \ref{itm:c} plain, we recall, that mollification and weak derivatives are interchangeble. It follows that $v=\sum v_j\in\hold^\infty(\Omega,V)$ and $u=\sum \rho_ju$ in any compact subset of $\Omega$. Moreover, due to the upper bound on $\varepsilon$, we obtain
	\begin{align*}
	\|u-v\|_{\sobo^{\A,p}(\Omega)}\leq\sum_{j=1}^\infty\|\rho_ju-v_j\|_{\sobo^{\A,p}(\Omega)}\leq \varepsilon(1-\varepsilon)^{-1}\leq 2\varepsilon,
	\end{align*}
	which proves that $v\in\Vsob^{\A,p}(\Omega)$ and concludes the proof of this step.
	
	\emph{Step II}. To conclude the proof of the Lemma, by Step I, it suffices to show density of $\hold^\infty(\bar{\Omega},V)$ in $\hold^\infty(\Omega,V)\cap\Vsob^{\A,p}(\Omega)$.
	
	We cover the boundary of $\Omega$ by open neighbourhoods $\{\mathcal{N}_x\}_{x\in\partial\Omega}$, where each $\mathcal{N}_x$ is the graph of a continuous function as in Definition~\ref{def:domains}\ref{itm:C0}. We extract a finite subcolection $\{\mathcal{N}_j\}_j$ that still covers $\partial\Omega$ and let $\{\rho_j\}_j$ be a partition of unity associated with $\{\mathcal{N}_j\}\cup\mathcal{N}$, where $\Omega\setminus\bigcup\mathcal{N}_j\Subset\mathcal{N}\Subset\Omega$. Since $u=\sum\rho_ju$ in $\Omega$, it suffices to prove the claim for $u$ and $\Omega$ relabelled by $\rho_ju$ and $\mathcal{N}_j\cap \Omega$, respectively. In coordinates $(x^\prime,x_n)$ as in Definition~\ref{def:domains}\ref{itm:C0}, we choose $u_\varepsilon(x^\prime,x_n)=u(x^\prime,x_n-\varepsilon)$ for small enough $\varepsilon>0$. Clearly $u_\varepsilon\in\hold^\infty(\bar{\Omega},V)$ and 
	\begin{align*}
	\|\partial^\alpha u-\partial^\alpha u_\varepsilon\|_{\lebe^p(\Omega,V)}\rightarrow0\qquad\text{as }\varepsilon\downarrow0\text{ whenever }\partial^\alpha u\in\lebe^p(\Omega,V),
	\end{align*}
	which completes the proof.
\end{proof}
\begin{lemma}\label{lem:sob_variants}
	Let $\A$ be as in \eqref{eq:A} have FDN and $\Omega\subset\R^n$ be a bounded Lipschitz domain. Then $\sobo^{\A,p}(\Omega)\simeq\Vsob^{\A,p}(\Omega)$, for each $1\leq p \leq\infty$.
\end{lemma}
\begin{proof}
	One embedding is clear by definition. Conversely, we first prove the inequality under the extra assumption that $\Omega$ is star--shaped with respect to a ball. Let $u\in\sobo^{\A,p}(\ball)$. We use Proposition~\ref{prop:poinc} to estimate, for $1\leq l \leq k-1$,
	\begin{align*}
	\|\nabla^l u\|_{p,\Omega}\leq \|\nabla^l(u-\pi_\Omega u)\|_{p,\Omega}+\|\nabla^l\pi_\Omega u\|_{p,\Omega}\lesssim \|\A u\|_{p,\Omega}+\|\nabla^l\pi_\Omega u\|_{p,\Omega}
	\end{align*}
	To estimate the latter term, we note that $P\mapsto\|\nabla^lP\|_{p,\Omega}$ defines a semi--norm on $\R_d[x]^V$, so it is controlled by the $\lebe^p$--norm of $P$. We have that
	\begin{align*}
	\|\nabla^l\pi_\Omega u\|_{p,\Omega}\lesssim\|\pi_\Omega u\|_{p,\Omega}\leq \|u-\pi_\Omega u\|_{p,\Omega}+\|u\|_{p,\Omega}\lesssim \|\A u\|_{p,\Omega}+\|u\|_{p,\Omega},
	\end{align*} 
	where the last inequality follows by another application of Proposition~\ref{prop:poinc}. Altogether, we have proved that
	\begin{align}\label{eq:sob_variants_star}
	\|u\|_{\Vsob^{\A,p}(\Omega)}\leq C(\Omega)\|u\|_{\sobo^{\A,p}(\Omega)}.
	\end{align}
	
	We now assume just that $\Omega$ is Lipschitz, hence has the cone property. Hence there exist $M$ sub--domains $\Omega_i$ that are star--shaped with respect to a ball and cover $\Omega=\bigcup_{i=1}^M\Omega_i$. We apply \eqref{eq:sob_variants_star} in each $\Omega_i$ to get
	\begin{align*}
	\|u\|_{\Vsob^{\A,p}(\Omega)}\leq\sum_{i=1}^M \|u\|_{\Vsob^{\A,p}(\Omega_i)}\leq \sum_{i=1}^M C(\Omega_i)\|u\|_{\sobo^{\A,p}(\Omega_i)}\leq \max_{1\leq i\leq M}C(\Omega_i)\sum_{i=1}^M\|u\|_{\sobo^{\A,p}(\Omega_i)}.
	\end{align*}
	Let now $1\leq p<\infty$. By concavity of the function $[0,\infty)\ni t\mapsto t^{1/p}$ and Jensen's Inequality, we obtain
	\begin{align}\label{eq:sob_variants_eq2}
	\sum_{i=1}^M\|u\|_{\sobo^{\A,p}(\Omega_i)}\leq M^{1-1/p}\|u\|_{\sobo^{\A,p}(\Omega)}.
	\end{align}
	If $p=\infty$, we simply estimate $\|u\|_{\sobo^{\A,\infty}(\Omega_i)}$ by $\|u\|_{\sobo^{\A,\infty}(\Omega)}$ to note that \eqref{eq:sob_variants_eq2} holds for $p=\infty$ as well (with the convention $1/p=\infty$). The proof is complete.
\end{proof}
\begin{lemma}\label{lem:extp>1}
	Let $\A$ as in \eqref{eq:A} have FDN, $1< p <\infty$, and $\Omega\subset\R^n$ be a star--shaped domain with respect to a ball. Then there exists a bounded, linear extension operator  $E_\Omega\colon\sobo^{\A,p}(\Omega)\rightarrow\Vsob^{k,p}(\R^n,V)$.
\end{lemma}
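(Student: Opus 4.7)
The plan is to exploit the Ka\l amajska representation formula (Theorem \ref{thm:Ka}), which applies since FDN is equivalent to $\C$--ellipticity (Proposition \ref{prop:FDNiffTypeC}). For $u \in \hold^\infty(\bar{\Omega}, V)$ one has
\[
u(x) = \mathcal{P}u(x) + \int_\Omega K(x-y)\,\A u(y)\dif y \quad \text{on } \Omega,
\]
with $\mathcal{P}u \in \R_{d(\A)}[x]^V$ and $|\partial^\alpha K(z)| \lesssim |z|^{k-n-|\alpha|}$. My strategy is to build $E_\Omega u$ by extending $\A u$ by zero, integrating this formula over all of $\R^n$, and truncating with a smooth cutoff.

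Explicitly, I would set $\widetilde{\A u} := \chi_\Omega \A u \in \lebe^p(\R^n, W)$, define
\[
\tilde u(x) := \mathcal{P}u(x) + \int_{\R^n} K(x-y)\,\widetilde{\A u}(y)\dif y, \qquad x \in \R^n,
\]
and fix $\eta \in \hold_c^\infty(\R^n)$ with $\eta \equiv 1$ on $\Omega$. Since $\widetilde{\A u}$ vanishes off $\Omega$, Theorem \ref{thm:Ka} gives $\tilde u \equiv u$ on $\Omega$; hence $E_\Omega u := \eta \tilde u$ is a linear extension of $u$, compactly supported in $\R^n$. After proving the estimate below on smooth functions, density of $\hold^\infty(\bar{\Omega}, V)$ in $\sobo^{\A, p}(\Omega)$ lets one upgrade to the full Sobolev space.

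The core task is then to bound $\|\nabla^l \tilde u\|_{\lebe^p(\spt \eta)} \lesssim \|u\|_{\sobo^{\A, p}(\Omega)}$ for $0 \leq l \leq k$, from which the $\Vsob^{k,p}$--bound on $\eta \tilde u$ follows by Leibniz. Differentiating under the integral,
\[
\nabla^l \tilde u(x) = \nabla^l \mathcal{P}u(x) + \int_{\R^n} (\nabla^l_x K)(x-y)\,\widetilde{\A u}(y)\dif y.
\]
The polynomial part is handled by $\|\mathcal{P}u\|_{\lebe^p(\Omega)} \leq \|u\|_{\lebe^p(\Omega)} + \|u - \mathcal{P}u\|_{\lebe^p(\Omega)}$ combined with the Riesz potential bound (Theorem \ref{thm:anal_harm}\ref{itm:riesz_domains}) applied to $u - \mathcal{P}u = K * \A u$, after which equivalence of norms on the finite--dimensional space $\R_{d(\A)}[x]^V$ transfers the bound to any $\lebe^p$--seminorm of $\nabla^l \mathcal{P}u$. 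For $l < k$, the kernel $\nabla^l K$ is pointwise dominated by the Riesz kernel $|\cdot|^{k-l-n}$ of strictly positive order $k - l \geq 1$, and Theorem \ref{thm:anal_harm} again bounds the convolution on bounded sets.

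The single delicate case, and the main obstacle that necessitates the Jones--type construction at the endpoint $p=1$, is the top order $l = k$: here $\nabla^k K$ has the critical $|z|^{-n}$ singularity, so the convolution must be interpreted in the principal value sense as a Calder\'on--Zygmund singular integral. The assumption $1 < p < \infty$ enters precisely here, via the $\lebe^p$--boundedness of such operators (Theorem \ref{thm:anal_harm}(a)), producing
\[
\bigl\|(\nabla^k K) * \widetilde{\A u}\bigr\|_{\lebe^p(\R^n)} \lesssim \|\A u\|_{\lebe^p(\Omega)}.
\]
This estimate genuinely fails at $p = 1$, where only weak--type $(1,1)$ bounds are available, which is exactly why the elaborate Whitney--decomposition argument of Theorem \ref{thm:extension} is required there. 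Collecting the estimates for $l = 0, \ldots, k$, applying the Leibniz rule to $\eta \tilde u$, and passing to the limit by density then completes the argument.
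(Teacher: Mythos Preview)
Your proposal is correct and follows essentially the same route as the paper's own proof: both define $E_\Omega u = \eta\bigl(\mathcal{P}u + K*(\chi_\Omega\A u)\bigr)$ via the Ka\l amajska representation, control the polynomial part by equivalence of norms on the finite--dimensional space $\R_{d(\A)}[x]^V$, and handle the convolution part through Riesz potential bounds for $l<k$ and Calder\'on--Zygmund theory at the top order $l=k$, which is exactly where $1<p<\infty$ is spent. Your write--up is slightly more explicit about density and the Leibniz step, but the argument is the same.
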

\begin{proof}
	We use the extension suggested in \cite{Kalamajska94}, namely, in the notation of Theorem~\ref{thm:Ka},
	\begin{align*}
	E_\Omega u(x):=\eta(x)\left(\mathcal{P}u(x)+\int_{\Omega}K(x,y)\A u (y)\dif y\right)
	\end{align*}
	for $u\in\hold^\infty(\bar{\Omega},V)$ and $x\in\R^n$. Here $\eta\in\hold^\infty_c(\R^n)$ is a smooth cut--off that equals 1 in a neighbourhood of $\Omega$. We abbreviate $\mathcal{K}u=\int_{\Omega}K(\,\cdot\,,y)\A u(y)\dif y$. Let $0\leq l\leq k$, and let $\ball$ be a ball containing the support of $\eta$. Then, with domain dependent constants,
	\begin{align*}
	\|\nabla^l E_\Omega u\|_{p,\ball}\lesssim \sum_{j=0}^l \|\nabla^j(\mathcal{P}u+\mathcal{K}u)\|_{p,\ball}\leq \|\mathcal{P}u\|_{\Vsob^{l,p}(\ball,V)}+\sum_{j=0}^l \|\nabla^j\mathcal{K}u\|_{p,\ball}.
	\end{align*}
	We note that $\|\cdot\|_{\Vsob^{l,p}(\ball,V)}$ and $\|\cdot\|_{\lebe^p(\Omega,V)}$ both define norms on $\R_d[x]^V$, hence they are equivalent. We also remark that $\nabla^j\mathcal{K}u=\int_{\Omega}\nabla^j_xK(\,\cdot\,,y)\A u(y)\dif y$, so that 
	\begin{align}\label{eq:est}
	\|\nabla^j\mathcal{K}u\|_{p,\ball}\leq\|\nabla^j\mathcal{K}u\|_{p,\R^n}\lesssim\|\A u\|_{p,\Omega}.
	\end{align}
	If $0\leq j<k$, the proof of \eqref{eq:est} is presented in the proof of Proposition~\ref{prop:poinc}. If $j=k$, \eqref{eq:est} follows from \cite[Ch.~II]{Stein} and the growth bounds on $\nabla^k_x K$.
	Collecting, we get
	\begin{align*}
	\|\nabla^l E_\Omega u\|_{p,\ball}\lesssim\|\mathcal{P}u\|_{p,\Omega}+\|\A u\|_{p,\Omega}\leq\|\mathcal{P}u+\mathcal{K}u\|_{p,\Omega}+\|\mathcal{K}u\|_{p,\Omega}+\|\A u\|_{p,\Omega}\lesssim\|u\|_{\sobo^{\A,p}(\Omega)},
	\end{align*}
	where the last inequality is obtained from \eqref{eq:est} with $j=0$.
\end{proof}
\begin{lemma}\label{lem:nec_ell}
	Let $\A$ be as in \eqref{eq:A} and $\Omega\subset\R^n$ be a bounded, open set. If $\sobo^{\A,1}(\Omega)\hookrightarrow\sobo^{k-1,p}(\Omega,V)$ for some $p>1$, then $\A$ is elliptic.
\end{lemma}
\begin{proof}
	Suppose $\A$ is not elliptic, such that there exist $\xi\in \mathbb{S}^{n-1}$, $v\in V\setminus\{0\}$ such that $\A[\xi]v=0$. Consider open cubes $Q_1,\, Q_2$ in $\R^n$ such that $\xi$ is normal to one of their faces and $\bar{Q}_1\subset\Omega\subset Q_2$, of side--lengths $2l_1$, $2l_2$, respectively. We put $u(x)=f((x-x_0)\cdot\xi)v$ for $x_0$ the centre of $Q_1$ and $f(t)=|t|^{k-1-1/p}$ if $t\in\R\setminus\{0\}$. We have that $\A u=0$ 
	and
	\begin{align*}
	\int_{\Omega}|u|\dif x\leq\int_{Q_2}|u|\dif x=\int_{Q_2}|f((x-x_0)\cdot\xi)||v|\dif x=l_2^{n-1}|v|\int_{-l_2}^{l_2}|t|^{k-1-1/p}\dif t<\infty,
	\end{align*}
	so that $u\in\sobo^{\A,1}(\Omega)$. On the other hand,
	\begin{align*}
	\int_{\Omega}|\D^{k-1}u|^p\dif x&\geq\int_{Q_1}|\D^{k-1}u|^p\dif x=\int_{Q_1}|f^{(k-1)}((x-x_0)\cdot\xi)|^p|v\otimes^{k-1}\xi|^p\dif x\\
	&=l_1^{n-1}|v\otimes^{k-1}\xi|^p\int_{-l_1}^{l_1}|t|^{-1}\dif t=\infty,
	\end{align*}
	so that $u\notin\sobo^{k-1,p}_{\locc}(\Omega,V)$. The proof is complete.
\end{proof}
\begin{lemma}\label{lem:embimpliesEC}
	Let $\A$ be as in \eqref{eq:A}. If $\sobo^{\A,1}(\ball)\hookrightarrow\sobo^{k-1,n/(n-1)}(\ball,V)$, then $\A$ is elliptic and cancelling.
\end{lemma}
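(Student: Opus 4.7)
The strategy is a direct scaling argument combined with a citation of Van Schaftingen's theorem \cite[Thm.~1.3]{VS}, which characterizes the homogeneous inequality \eqref{eq:VS} as being equivalent to ellipticity and cancellation. The idea is that the inhomogeneous $\lebe^1$ term on the right-hand side of the assumed estimate can be made arbitrarily small relative to the $\A$-term after rescaling functions with compact support to arbitrarily small scales.

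Concretely, let $u\in\hold_c^\infty(\R^n,V)$, and fix $r_0>0$ so that $\spt u\subset \ball(0,r_0)$. For $R>r_0$ define $v_R(x):=u(Rx)$, which is then compactly supported in $\ball(0,r_0/R)\Subset\ball$, and thus belongs to $\sobo^{\A,1}(\ball)$. Using the chain rule one computes $\A v_R(x)=R^k(\A u)(Rx)$ and $\D^{k-1}v_R(x)=R^{k-1}(\D^{k-1}u)(Rx)$, and by the standard change of variables
\begin{align*}
\|v_R\|_{\lebe^1(\ball)}&=R^{-n}\|u\|_{\lebe^1(\R^n)},\\
\|\A v_R\|_{\lebe^1(\ball)}&=R^{k-n}\|\A u\|_{\lebe^1(\R^n)},\\
\|\D^{k-1}v_R\|_{\lebe^{n/(n-1)}(\ball)}&=R^{k-n}\|\D^{k-1}u\|_{\lebe^{n/(n-1)}(\R^n)}.
\end{align*}

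Applying the hypothesised embedding to $v_R$ (which controls the $\lebe^{n/(n-1)}$ norm of $\D^{k-1}v_R$ by the $\sobo^{k-1,n/(n-1)}$ norm) and then dividing through by $R^{k-n}$ yields
\begin{align*}
\|\D^{k-1}u\|_{\lebe^{n/(n-1)}(\R^n)}\leq c\|\A u\|_{\lebe^1(\R^n)}+cR^{-k}\|u\|_{\lebe^1(\R^n)}.
\end{align*}
Since $k\geq 1$, sending $R\to\infty$ absorbs the lower-order term and produces the homogeneous inequality \eqref{eq:VS} for every $u\in\hold_c^\infty(\R^n,V)$.

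By \cite[Thm.~1.3]{VS}, validity of \eqref{eq:VS} for all compactly supported smooth test fields is equivalent to $\A$ being elliptic and cancelling, and the lemma follows. No step presents a genuine obstacle; the only thing to watch is that the $\lebe^1$-term genuinely has a strictly lower power of $R$ than the $\A$-term, which is precisely what makes the scaling argument collapse the inhomogeneous estimate onto the homogeneous one in the limit.
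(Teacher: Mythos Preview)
Your proof is correct and follows essentially the same route as the paper: rescale a compactly supported test function into $\ball$, apply the assumed inhomogeneous embedding, and reduce to the homogeneous inequality \eqref{eq:VS}, then invoke \cite[Thm.~1.3]{VS}. The only difference is in how the lower--order $\lebe^1$ term is disposed of: the paper first establishes ellipticity separately (via Lemma~\ref{lem:nec_ell} or the argument in the proof of Theorem~\ref{thm:main_k}) and then uses the zero--trace Poincar\'e inequality \eqref{eq:zerotracepoinc} to absorb $\|u_r\|_{\lebe^1}$ into $\|\A u_r\|_{\lebe^1}$ in one step, whereas you simply send $R\to\infty$ so that the $R^{-k}\|u\|_{\lebe^1}$ contribution vanishes, obtaining ellipticity and cancellation simultaneously from \textsc{Van Schaftingen}'s theorem; your variant is a touch more economical.
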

\begin{proof}
	Necessity of ellipticity follows via Lemma~\ref{lem:nec_ell}. We next show that our assumed embedding implies $\dot{\sobo}{^{\A,1}}(\R^n)\hookrightarrow\dot{\sobo}{^{k-1,n/(n-1)}}(\R^n,V)$ by a scaling argument, so that cancellation follows by the necessity part of \cite[Thm.~1.3]{VS}. Let $u\in\hold^\infty_c(\R^n,V)$ be such that $\spt u\subset\ball_r:=\ball(0,r)$. Then $u_r(x):=u(rx)$ for $x\in\R^n$ is also a test function, with $\spt u_r\subset\ball:=\ball(0,1)$. We estimate, with constants independent of $r$:
	\begin{align*}
	\|\D^{k-1} u\|_{\lebe^{\frac{n}{n-1}}}&=\left(\int_{\ball_r}|\D^{k-1}u(x)|^{\frac{n}{n-1}}\dif x\right)^{\frac{n-1}{n}}=\left(\int_{\ball}r^{\frac{n(k-1)}{n-1}}|\D^{k-1}u_r(y)|^{\frac{n}{n-1}}r^n\dif y\right)^{\frac{n-1}{n}}\\
	&=r^{n-k}\left(\int_{\ball}|\D^{k-1}u_r(y)|^{\frac{n}{n-1}}\dif y\right)^{\frac{n-1}{n}}\leq cr^{n-k}\int_{\ball}|\A u_r(y)|+|u_r(y)|\dif y\\
	&=c\int_{\ball_r}|\A u(x)|\dif x+cr^{-k}\int_{\ball_r}|u(x)|\dif x\leq c\int_{\ball_r}|\A u(x)|\dif x=c\|\A u\|_{\lebe^1},
	\end{align*}
	where the last inequality follows from a change of variable and the Poincar\'e--type inequality with zero boundary values (for elliptic operators)
	\begin{align}\label{eq:zerotracepoinc}
	\|v\|_{\lebe^1(\Omega,V)}\leq c(\diam\Omega)^{k}\|\A v\|_{\lebe^1(\Omega,W)}
	\end{align}
	for all $v\in\hold^\infty_c(\Omega,V)$. The proof is complete.
\end{proof}
The inequality \eqref{eq:zerotracepoinc} follows from an iteration of Poincar\'e's Inequality, the Green--type Formula \eqref{eq:representation}, and Theorem~\ref{thm:anal_harm}\ref{itm:riesz_domains}, in the following way:
\begin{align*}
\|v\|_{\lebe^1(\Omega)}&\leq c(\diam\Omega)^{k-1}\|\D^{k-1}v\|_{\lebe^1(\Omega)}=c(\diam\Omega)^{k-1}\|\mathbb{K}_\A\star(\A v)\|_{\lebe^1(\Omega)}\\
&\leq c(\diam\Omega)^{k-1}\|I_1|\A v|\|_{\lebe^1(\Omega)}\leq c(\diam\Omega)^{k}\|\A v\|_{\lebe^1(\Omega)}
\end{align*}
A similar, straightforward argument also gives the inequality in Lemma~\ref{lem:zerotraceemb} below.

\begin{lemma}\label{lem:zerotraceemb}
	Let $\A$ as in \eqref{eq:A} be elliptic, $k=1$. Then for each $1\leq p<n/(n-1)$, there exists $c>0$ such that 
	\begin{align*}
	\|u\|_{\lebe^p(\ball,V)}\leq c\|\A u\|_{\lebe^1(\ball,W)}
	\end{align*}
	for all $u\in\hold^\infty_c(\ball,V)$.
\end{lemma}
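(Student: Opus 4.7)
\emph{Plan.} The estimate should follow in one breath from the Green--type representation \eqref{eq:representation} combined with the $\lebe^1\to\lebe^p$ mapping property of the Riesz potential $I_1$ on bounded domains, exactly as the remark preceding the lemma suggests. First I would specialize \eqref{eq:representation} to $k=1$: for every $u\in\ccinfty(\R^n,V)$ one has
\begin{align*}
u(x) = \int_{\R^n} \mathbf{K}_\A(x-y)\, \A u(y)\, \dif y,
\end{align*}
where $\mathbf{K}_\A\in\hold^\infty(\R^n\setminus\{0\},\lin(W,V))$ is $(1-n)$--homogeneous and hence satisfies the pointwise bound $|\mathbf{K}_\A(z)|\lesssim|z|^{1-n}$.

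Next, for $u\in\ccinfty(\ball,V)$ the support of $\A u$ is contained in $\ball$, so the previous representation and the homogeneity bound on $\mathbf{K}_\A$ give the pointwise estimate
\begin{align*}
|u(x)|\leq c\int_{\ball}\frac{|\A u(y)|}{|x-y|^{n-1}}\,\dif y = c\,(I_1|\A u|)(x),\qquad x\in\ball.
\end{align*}
Theorem \ref{thm:anal_harm}\ref{itm:riesz_domains}, applied with $\alpha=1$ and source exponent $1$, gives $\|I_1 f\|_{\lebe^p(\ball)}\leq c(\diam\ball)^{1-n(1-1/p)}\|f\|_{\lebe^1(\ball)}$ for every $1\leq p<n/(n-1)$. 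Plugging $f=|\A u|$ into this bound finishes the argument and produces the claimed constant (which depends only on $n$ and $p$ since $\diam\ball=2$).

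There is no real obstacle here; the argument is entirely linear in character and uses no cancellation. It is worth flagging only that the endpoint $p=n/(n-1)$ is \emph{not} recovered by this reasoning: the strict inequality in Theorem \ref{thm:anal_harm}\ref{itm:riesz_domains} reflects the failure of the strong $\lebe^1\to\lebe^{n/(n-1)}$ bound for $I_1$ (only a weak--type estimate is available, cf. \cite[Ch.~V.1]{Stein}), and retrieving the endpoint on $\ccinfty(\ball,V)$ is exactly the content of \textsc{Van Schaftingen}'s inequality \eqref{eq:VS}/\eqref{eq:zerotraceemb}, which additionally requires the cancelling condition on $\A$.
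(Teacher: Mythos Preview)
Your argument is correct and is exactly the route the paper indicates: the sentence immediately preceding the lemma states that the inequality follows from the Green--type formula \eqref{eq:representation} together with Theorem~\ref{thm:anal_harm}\ref{itm:riesz_domains}, which is precisely the combination you spell out (specialize \eqref{eq:representation} to $k=1$, bound pointwise by $I_1|\A u|$, then apply the sub--critical Riesz potential estimate with source exponent~$1$). Your remark on why the endpoint $p=n/(n-1)$ is excluded is also accurate and consistent with the discussion in the introduction.
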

\section{EC Versus FDN}\label{sec:ECvsFDN}

We begin by proving the first two statements in Theorem~\ref{thm:tools}. Throughout, $n>1$.
\begin{proposition}\label{prop:FDNiffTypeC}
Let $\A$ be as in \eqref{eq:A}. Then $\A$ has FDN if and only if $\A$ is $\C$--elliptic.
\end{proposition}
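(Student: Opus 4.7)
The plan is to prove the two directions separately; the reverse implication is the easier one, while the forward implication demands an algebraic argument via the Hilbert Nullstellensatz.

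For the direction ``FDN $\Rightarrow$ $\C$-elliptic'', I would argue by contraposition. If $\A$ fails $\C$-ellipticity, there exist $\xi \in \C^n \setminus \{0\}$ and $v \in V + \imag V$ with $v \neq 0$ and $\A[\xi] v = 0$. I would then introduce the complex-valued polynomials $u_j(x) := (x \cdot \xi)^j v$ for $j \in \N$, for which the Leibniz rule yields $\A u_j = \binom{j}{k} k!\, (x \cdot \xi)^{j-k} \A[\xi] v = 0$. Since the $u_j$ are nonzero polynomials of distinct degrees in $x$, they are linearly independent over $\C$; their real and imaginary parts are genuine $V$-valued solutions of $\A u = 0$ (because $\A$ has real coefficients), and an easy dimension count extracts infinitely many $\R$-linearly independent ones, contradicting FDN.

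For the converse ``$\C$-elliptic $\Rightarrow$ FDN'', I would combine the Hilbert Nullstellensatz with Cramer's rule. Setting $m := \dim V$, let $p_1(\xi), \ldots, p_N(\xi)$ be the $m \times m$ minors of the matrix $\A[\xi]$; $\C$-ellipticity is equivalent to $\A[\xi]$ having full column rank for every $\xi \in \C^n \setminus \{0\}$, i.e., to the common complex zero set of $\{p_i\}$ reducing to $\{0\}$. The Nullstellensatz then supplies an integer $M$ with $\xi_j^M \in (p_1, \ldots, p_N)$ for each $j = 1, \ldots, n$; extracting real parts of the coefficients yields identities $\xi_j^M = \sum_i q_{ij}(\xi) p_i(\xi)$ in $\R[\xi]$. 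In parallel, Cramer's rule applied to the square submatrix underlying each $p_i$ produces polynomial matrices $R_i(\xi) \in \lin(W, V)[\xi]$ with $R_i(\xi) \A[\xi] = p_i(\xi) \id_V$.

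Combining the two algebraic facts, the constant-coefficient operators $S_j(D) := \sum_i q_{ij}(D) R_i(D)$ satisfy the commutation identity $S_j(D)\, \A(D) = \partial_j^M \id_V$. For any distribution $u$ on $\R^n$ with $\A u = 0$, this forces $\partial_j^M u = 0$ for $j = 1, \ldots, n$, so $u$ is a polynomial of degree strictly less than $M$ in each variable; hence $\ker \A$ embeds into the finite-dimensional space of such polynomials, of dimension at most $M^n \dim V$. The principal obstacle is the passage from the abstract Nullstellensatz statement to the concrete, effective real-coefficient identities $\xi_j^M = \sum_i q_{ij} p_i$ and $R_i \A[\xi] = p_i \id_V$; once these are in hand, the translation into constant-coefficient differential operators is essentially formal.
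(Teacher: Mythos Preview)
Your proposal is correct. For the direction ``not $\C$-elliptic $\Rightarrow$ not FDN'' you and the paper do essentially the same thing: both produce null solutions of the form $f(x\cdot\xi)v$, the paper allowing general holomorphic $f$ and invoking the Cauchy--Riemann equations, while you take the concrete polynomial family $f(z)=z^{j}$ and compute directly. Your version is marginally more elementary and entirely adequate for the purpose.

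The genuine divergence is in the direction ``$\C$-elliptic $\Rightarrow$ FDN''. The paper does not argue this at all; it simply invokes Theorem~\ref{thm:Ka} (\textsc{Ka{\l}amajska}'s integral representation), which already forces $\ker\A\subset\R_{d}[x]^{V}$. Your Nullstellensatz/Cramer argument is self-contained and avoids that machinery: from the vanishing locus of the maximal minors you extract operators $S_{j}(D)$ with $S_{j}(D)\A=\partial_{j}^{M}$, whence every null solution is polynomial. This is essentially the route of \textsc{Smith}~\cite{Smith}, which the paper acknowledges but does not reproduce. The trade-off is that the paper's appeal to Theorem~\ref{thm:Ka} is not gratuitous: that representation formula is reused later (Proposition~\ref{prop:poinc}, Lemma~\ref{lem:sob_variants}, Lemma~\ref{lem:extp>1}), so citing it here costs nothing extra. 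Your argument, by contrast, gives a clean standalone proof of the equivalence with an explicit degree bound $M^{n}\dim V$, but does not furnish the integral kernel needed elsewhere.
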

\begin{proof}
From Theorem~\ref{thm:Ka}, we have that if $\A$ is $\C$--elliptic, then $\ker\A$ consists of polynomials of fixed maximal degree. Suppose now that $\A$ is not $\C$--elliptic, so that there exist non--zero $\xi\in\C^n$, $v\in V+\imag V$ such that $\A[\xi]v=0$. We define $u_f(x)=f(x\cdot\xi)v$, for holomorphic $f\colon\C\rightarrow\C$. It can be shown by direct real differentiation of real and imaginary parts and use of the Cauchy--Riemann equations for $f$ that $\D u_f(x)=(\partial_1 f)(x\cdot\xi)v\otimes\xi$. Since $\partial_1 f$ is itself holomorphic, inductively we get that $\D^l u_f(x)=(\partial^l_1f)(x\cdot\xi)v\otimes^l\xi$. We make the simple observation that there exists a linear map $A\in\lin(V\odot^k\R^n,W)$ such that $\A u=A(\D^k u)$, which can be viewed as a coordinate invariant (jet) definition of $\A$. In this notation, by standard properties of the Fourier transform we get $\A[\eta]w=A(w\otimes^k\eta)$ for $\eta\in\R^n$, $w\in V$. It is then easy to see that $\A u_f(x)=(\partial_1^k f)(x\cdot\xi)A(v\otimes^k\xi)=0$. In particular, $\Re u_f,\Im u_f\in\ker\A$, so $\A$ has infinite dimensional null--space.
\end{proof}
The above result enables us to use FDN and $\C$--ellipticity interchangeably. Note that to prove that FDN implies ellipticity, one can simply take real $\xi,\,v$ and $f\in\hold^1(\R)$. We next provide an instrumental ingredient for proving sufficiency of FDN for Theorem~\ref{thm:main_k}.
\begin{lemma}\label{lem:FDNimpliesEC}
Let $\A$ be as in \eqref{eq:A}. If $\A$ has FDN, then $\A$ is cancelling.
\end{lemma}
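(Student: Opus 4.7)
The plan is to argue by contradiction, using the equivalence of FDN and $\C$-ellipticity from Proposition \ref{prop:FDNiffTypeC} together with a one-parameter complex deformation of the symbol. Suppose that some non-zero $w$ lies in $\bigcap_{\xi\in\R^n\setminus\{0\}}\A[\xi](V)$. Since $n>1$, I fix real linearly independent $e_1,e_2\in\R^n$ and set $M(\zeta):=\A[e_1+\zeta e_2]$, a matrix polynomial in $\zeta$ of degree exactly $k$ with leading coefficient $\A[e_2]$. The complex line $\{e_1+\zeta e_2:\zeta\in\C\}$ avoids $0\in\C^n$, so $\C$-ellipticity makes $M(\zeta)$ injective as a map $V+\imag V\to W+\imag W$ for every $\zeta\in\C$. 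In particular, for each $t\in\R$ the assumption on $w$ and injectivity of $M(t)$ determine a unique $v(t)\in V$ with $M(t)v(t)=w$.

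The next step is to extend $v$ to a rational function on $\C$ via Cramer's rule. Pointwise injectivity on $\C$ implies that $M(\zeta)$ has column rank $N:=\dim V$ over the field $\C(\zeta)$, so some $N\times N$ submatrix $N(\zeta)$ has determinant $\Delta(\zeta)\in\C[\zeta]$ not identically zero; let $w'\in\C^N$ consist of the corresponding entries of $w$. Cramer's rule produces a polynomial vector $P\in\C[\zeta]^N$ with $N(\zeta)^{-1}w'=P(\zeta)/\Delta(\zeta)$ whenever $\Delta(\zeta)\neq0$, and uniqueness of $v(t)$ for real $t$ with $\Delta(t)\neq0$ forces $v(t)=P(t)/\Delta(t)$ on a cofinite subset of $\R$. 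Hence the polynomial identity
\[
M(\zeta)P(\zeta)=\Delta(\zeta)w
\]
holds on all of $\C$. After dividing out $g:=\gcd(\Delta,P_1,\ldots,P_N)$ in $\C[\zeta]$ I may assume $\Delta,P_1,\ldots,P_N$ are coprime. If $\zeta_0\in\C$ were a root of $\Delta$, then $M(\zeta_0)P(\zeta_0)=0$, and $\C$-ellipticity of $M(\zeta_0)$ would force $P(\zeta_0)=0$, contradicting coprimality. So $\Delta$ has no complex roots, hence is a non-zero constant $c\in\C$, and $v(\zeta)=P(\zeta)$ is a polynomial vector satisfying $M(\zeta)P(\zeta)=cw$ with $cw\neq0$.

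The last step is a degree count. If $P\not\equiv0$ has leading coefficient $p_d\neq0$ in degree $d\geq0$, then using $M(\zeta)=\A[e_2]\zeta^k+\text{(lower order in }\zeta\text{)}$, the leading $\zeta$-coefficient of $M(\zeta)P(\zeta)$ is $\A[e_2]p_d$; by (real) ellipticity of $\A$ applied to $e_2\in\R^n\setminus\{0\}$, $\A[e_2]$ is injective, so $\A[e_2]p_d\neq0$ and $M(\zeta)P(\zeta)$ has positive degree $k+d\geq1$, contradicting that it equals the constant $cw$. Thus $P\equiv0$, whence $cw=0$ against $c,w\neq0$. I expect the main technical subtlety to be the passage from the pointwise real solution $v(t)$ to a globally defined rational extension on $\C$ through Cramer's rule; $\C$-ellipticity is then used twice, once to cancel residual zeros of the Cramer determinant $\Delta$, and once implicitly through real ellipticity to rule out cancellation of the leading $\zeta$-term in the final degree count.
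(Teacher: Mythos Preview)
Your argument is correct and takes a genuinely different route from the paper's proof. The paper proceeds analytically: it invokes Van Schaftingen's characterisation (Lemma~\ref{lem:canc}) that cancellation is equivalent to $\int_{\R^n}\A u\,\dif x=0$ for all $u$ with $\A u$ compactly supported, then uses the Ka\l amajska representation (Theorem~\ref{thm:Ka}) and an overlapping-balls argument to show that any such $u$ agrees with an element $P\in\ker\A$ outside a compact set, and concludes by integration by parts. Your proof is instead purely algebraic at the level of the symbol: you parametrise a complex affine line $\zeta\mapsto e_1+\zeta e_2$ inside $\C^n\setminus\{0\}$, use Cramer's rule to produce a rational solution of $M(\zeta)v=w$, clear denominators via $\C$-ellipticity, and finish with a degree count. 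This avoids both the analytic characterisation of cancellation and the representation formula, yielding a self-contained argument that uses only Proposition~\ref{prop:FDNiffTypeC} and linear algebra over $\C(\zeta)$. The paper's approach, on the other hand, connects more naturally with the analytic machinery (Green-type representations, Poincar\'e inequalities) that drives the rest of the paper. One small remark: in the final degree step you appeal to real ellipticity of $\A[e_2]$ while $p_d$ may be a complex vector; this is harmless since a real-linear injective map $V\to W$ extends to an injective map $V+\imag V\to W+\imag W$, but it is worth making explicit.
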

\begin{proof}
We use Lemma~\ref{lem:canc}. Let $u\in\hold^\infty(\R^n,V)$ be such that $K:=\spt\A u$ is compact. Consider an open ball $\ball$ containing $K$. Cover the complement of  $\ball$ with an increasing chain of overlapping open balls $B_j$ such that $\ball^c\subset \bigcup_j B_j\subset K^c$. In particular, we have $\A u=0$ in each $B_j$, so by Theorem~\ref{thm:Ka}, $u$ must be a polynomial of degree at most $d(\A)$ in each $B_j$. Since the pairs of balls overlap on a set of positive measure, we get that $u$ equals a $V$--valued polynomial $P$ (tacitly viewed as already extended to the entire $\R^{n}$) in $\ball^c$ such that $\A P=0$ in $\R^n$. To conclude, we elaborate on the notation introduced in the proof of Proposition~\ref{prop:FDNiffTypeC}. Put $m:=\dim W$, so that we can write in coordinates $(A\mathscr{V})_{l}=A^l\cdot\mathscr{V}$ for fixed $A^l\in V\odot^k\R^n$, $l=1\ldots m$, and all $\mathscr{V}\in V\odot^k\R^n$. For $l=1\ldots m$, we integrate by parts to get
\begin{align*}
\int_{\R^n}(\A u)_l\dif x&=\int_{\ball}A^l\cdot \D^k u\dif x=
\int_{\partial\ball}A^l\cdot(\D^{k-1}u\otimes\nu) \dif\mathcal{H}^{n-1}\\
&=\int_{\partial\ball}A^l\cdot(\D^{k-1}P\otimes \nu)\dif\mathcal{H}^{n-1}=\int_{\ball}A^l\cdot\D^k P\dif x=\int_{\ball}(\A P)_l\dif x=0,
\end{align*}
where $\nu$ denotes the unit normal to $\partial\ball$. The proof is complete.
\end{proof}
The converse of Lemma~\ref{lem:FDNimpliesEC}, however, is not true in general. In what follows, we complete the algebraic comparison of the FDN condition and \textsc{Van Schaftingen}'s EC condition. We write $N:=\dim V$. The streamline here is that for $N=k=1$, ellipticity alone implies FDN (rendering these cases rather uninteresting), whereas in higher dimensions or for higher orders, there are EC operators that are not FDN. Somewhat surprisingly, there are also a few instances in which ellipticity and $\C$--ellipticity differ, but EC implies FDN. We give the details below.
\begin{lemma}\label{lem:EimpliesFDN}
Let $\A$ as in \eqref{eq:A} be elliptic, $N=k=1$. Then $\A$ has FDN.
\end{lemma}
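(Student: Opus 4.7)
The plan is to use Proposition \ref{prop:FDNiffTypeC} to reduce FDN to $\C$--ellipticity, and then exploit the fact that for scalar fields with a first order operator the symbol is genuinely linear in $\xi$, so a vanishing complex symbol splits into two real symbol vanishings.

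More concretely, since $N=\dim V=1$ and $k=1$, we may identify $\lin(V,W)\cong W$ and write $\A u=\sum_{j=1}^{n}A_j\partial_j u$ with $A_j\in W$. The symbol is the linear map
\begin{align*}
\A[\xi]\colon V\to W,\qquad \A[\xi]v=v\sum_{j=1}^{n}\xi_j A_j,
\end{align*}
and its complexification acts on $v\in V+\imag V\cong\C$ and $\xi\in\C^n$ by the same formula, with values in $W+\imag W=W\otimes_{\R}\C$. I would then argue that ellipticity already forces $\C$--ellipticity in this setting. Indeed, suppose $\A[\xi]v=0$ for some nonzero $\xi\in\C^n$ and nonzero $v\in\C$. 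Dividing by $v$ reduces the claim to showing that $\sum_{j}\xi_j A_j=0$ in $W\otimes_{\R}\C$ forces some nonzero real $\eta\in\R^n$ with $\sum_j \eta_j A_j=0$.

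The key step is simply to decompose $\xi=\alpha+\imag\beta$ with $\alpha,\beta\in\R^n$, not both zero. Since the $A_j$ are real vectors in $W$, the identity $\sum_j(\alpha_j+\imag\beta_j)A_j=0$ splits into its real and imaginary parts, yielding
\begin{align*}
\sum_{j=1}^{n}\alpha_j A_j=0\quad\text{and}\quad\sum_{j=1}^{n}\beta_j A_j=0\qquad\text{in }W.
\end{align*}
At least one of $\alpha,\beta$ is nonzero, and this directly contradicts ellipticity of $\A$ over $\R$. Hence $\A$ is $\C$--elliptic, and Proposition \ref{prop:FDNiffTypeC} finishes the proof.

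There is really no substantial obstacle here; the argument is essentially algebraic. The only subtlety worth flagging is the reduction of the vector--valued condition $\A[\xi]v=0$ with $v\in V+\imag V$ to the scalar identity $\sum_j\xi_j A_j=0$, which uses $\dim V=1$ in a crucial way. In higher--rank settings ($N\geq2$) this splitting breaks, which is precisely why the table in the introduction shows that for $N\geq2$ already $k=1$ generally only gives EC$\Rightarrow$FDN rather than E$\Rightarrow$FDN.
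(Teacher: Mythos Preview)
Your proof is correct and follows essentially the same approach as the paper: both reduce FDN to $\C$--ellipticity via Proposition~\ref{prop:FDNiffTypeC} and then use that for $N=k=1$ the symbol is $\R$--linear in $\xi$, so a nontrivial complex zero splits into a nontrivial real zero. The paper phrases this as ``all roots of the (linear) polynomials $(\A[\xi])_l$ are real'' and notes the equivalent matrix formulation $\A[\xi]=A\xi$, while you make the real/imaginary splitting explicit; the content is the same.
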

\begin{proof}
Since $N=1$, it is clear that $\A$ is $\mathbb{F}$--elliptic, $\mathbb{F}\in\{\R,\C\}$, if and only if the polynomials $(\A [\xi])_l$, $l=1\ldots m$, have no common non--trivial zeroes in $\mathbb{F}$. Since we also assume $k=1$, we have $\A[\xi]=A\xi$ for some $A\in\R^{m\times n}$ 
It is clear that all roots of the polynomials thus arising are real (in fact, $\A$ is $\mathbb{F}$--elliptic if and only if $\ker_{\R} A=0$).
\end{proof}
If $n\geq3$, EC turns out to be insufficient for FDN, even for scalar fields or first order operators.
\begin{counterexample}[EC does \emph{not} imply FDN]\label{ex:EC>FDN}
Consider the operators
\begin{align*}
\A_{k,n} u&:=\nabla^{k-1}\left(\partial_1 u_1-\partial_2 u_2, \partial_2 u_1+\partial_1 u_2, \partial_j u_i\right)_{(i,j)\notin\{1,2\}\times\{1,2\}} &\text{ for }N\geq2,\qquad\quad\\
\mathbb{B}_{k,n}u&:=\nabla^{k-2}\left(\partial^2_1 u+ \partial^2_2 u, \partial^2_j u\right)_{j=3\ldots n}&\text{ for }N=1,k\geq2.\,\hspace{1pt}
\end{align*}
If $n\geq3$ or $k\geq2$, then $\A_{k,n}$ is elliptic and cancelling, but has infinite dimensional null--space. The same is true of $\mathbb{B}_{k,n}$ if $n\geq3$ or $k\geq3$.
\end{counterexample}
\begin{proof}
The failure of FDN is clear: simply take 
\begin{align*}
u_\A(x)&:=\left(\Re f\left(x_1+\imag x_2\right), \Im f\left(x_1 + \imag x_2\right), 0,\ldots,0\right)^\mathsf{T}\\
u_\mathbb{B}(x)&:=g(x_1,x_2)
\end{align*}
for holomorphic $f$ and (scalar) harmonic $g$. We next show that $\A_{k,n}=\nabla^{k-1}\A_{1,n}$ is elliptic if $n,N\geq2$. We can reduce to ellipticity of $\A_{1,n}$, since for non--zero $\xi$, we have that $0=\A_{k,n}[\xi]v=(\A_{1,n}[\xi]v)\otimes^{k-1}\xi$, so $\A_{1,n}[\xi]v=0$. Let $1\leq j\leq n$ be such that $\xi_j\neq0$. If $j\geq3$, we clearly get $v=0$. If $1\leq j\leq2$, we get that $v_i=0$ for $3\leq i\leq N$. The remaining equations are $\xi_1v_1-\xi_2v_2=0=\xi_2v_1+\xi_1v_2$, with determinant $\xi_1^2+\xi_2^2>0$, so $v_1=0=v_2$. It remains to check that, under our assumptions, $\A_{k,n}$ is cancelling. The case $k>1$ is easier, since the composition of operators $\mathbb{L}_1\circ\mathbb{L}_2$ is cancelling if $\mathbb{L}_1$ is. This is simply due to the fact that $\textrm{im}(\mathbb{L}_1\circ\mathbb{L}_2)[\xi]=\mathbb{L}_1[\xi](\textrm{im}\mathbb{L}_2[\xi])\subseteq\textrm{im}\mathbb{L}_1[\xi]$. If $k=1$ and $n\geq3$ we can make a straightforward computation. Write $(w_l)_{l=1\ldots Nn-2}:=\A_{1,n}[\xi]v$. For $w\in\bigcap_{\xi\neq0}\A_{1,n}[\xi](V)$, we can essentially test with different values of $\xi\neq0$. By choosing $\xi$ to have exactly one non--zero entry, we obtain that $w_l=0$ for $3\leq l\leq Nn-2$. Incidentally, when testing with $\xi$ such that $\xi_1=0=\xi_2$, we also obtain $w_1=0=w_2$, so all properties are checked for $\A_{k,n}$. Ellipticity of $\mathbb{B}_{k,n}$ is obvious, whereas cancellation is established analogously.
\end{proof}
The two specific cases that are not covered by Lemma~\ref{lem:EimpliesFDN} and Counterexample~\ref{ex:EC>FDN} reveal that the classes EC and FDN can coincide even if they are strictly smaller than the class of elliptic operators.
\begin{lemma}\label{lem:EC=FDN}
Let $n=2$ and $\A$ be as in $\eqref{eq:A}$ be elliptic but not $\C$--elliptic. If any of the following hold,
\begin{enumerate}
\item\label{it:ECimpliesFDN_a} $N=1$, $k=2$,
\item\label{it:ECimpliesFDN_b} $N\geq2$, $k=1$,
\end{enumerate}
then $\A$ is not cancelling.
\end{lemma}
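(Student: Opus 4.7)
The target in each case is, equivalently to the failure of cancellation, to produce a fixed nonzero $w_0 \in W$ that lies in $\A[\xi](V)$ for every $\xi \in \R^2 \setminus \{0\}$. In both cases the plan is to exploit that $\A$ has real coefficients: the single complex kernel vector supplied by the failure of $\C$-ellipticity can be upgraded to a complex-conjugate pair of data, from which one extracts a $\xi$-independent subspace of $W$ contained in every $\A[\xi](V)$.

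For case (a), I would identify the symbol with the $W$-valued homogeneous quadratic polynomial $p(\xi) = \xi_1^2 A_{(2,0)} + \xi_1 \xi_2 A_{(1,1)} + \xi_2^2 A_{(0,2)}$ in two real variables, so that $\A[\xi](c) = c\, p(\xi)$ for $c \in V = \R$. Failure of $\C$-ellipticity together with ellipticity yields a root $\zeta \in \C^2 \setminus \{0\}$ of $p$ that is not proportional to any real vector (a real root is ruled out by ellipticity). Because $p$ has real coefficients, $\bar\zeta$ is also a root, and $\zeta, \bar\zeta$ are projectively distinct precisely because $\zeta$ is not proportional to a real vector. Each scalar component $p_j$ of $p$ is then a real homogeneous quadratic vanishing at the two distinct projective points $\zeta, \bar\zeta$, hence equals $c_j Q(\xi)$ for some $c_j \in \R$ (possibly zero), where $Q(\xi) := (\zeta_1 \xi_2 - \zeta_2 \xi_1)(\bar\zeta_1 \xi_2 - \bar\zeta_2 \xi_1) \in \R[\xi_1,\xi_2]$ is the fixed real quadratic with these roots. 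Assembling components gives $p(\xi) = Q(\xi)\, w_0$ for $w_0 := (c_j)_j \in W$, with $w_0 \neq 0$ since $\A \not\equiv 0$, and $Q$ strictly positive on $\R^2 \setminus \{0\}$ again by ellipticity. Hence $\A[\xi](V) = \R w_0$ for every nonzero $\xi$, and cancellation fails.

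For case (b), I would use $\A[\xi] = \xi_1 A_1 + \xi_2 A_2$ with both $A_1, A_2$ injective by ellipticity. I would normalize the kernel data supplied by failure of $\C$-ellipticity to $\zeta = (\alpha + \imag \beta, 1)$ with $\beta \neq 0$ (short appeals to ellipticity rule out the degenerate configurations $\zeta_2 = 0$ and $\beta = 0$) and write the kernel vector as $v = v_1 + \imag v_2$ with $v_1, v_2 \in V$. Splitting $\A[\zeta]v = 0$ into real and imaginary parts yields
\begin{align*}
A_2 v_1 = -\alpha A_1 v_1 + \beta A_1 v_2, \qquad A_2 v_2 = -\beta A_1 v_1 - \alpha A_1 v_2,
\end{align*}
and two further short uses of ellipticity together with injectivity of $A_1$ give $\R$-linear independence of $\{v_1, v_2\}$ and of $\{A_1 v_1, A_1 v_2\}$. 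Plugging the above formulas into $\A[\xi]$ shows that the restriction of $\A[\xi]$ to $V_0 := \spano_\R(v_1, v_2)$, expressed in the bases $(v_1, v_2)$ and $(A_1 v_1, A_1 v_2)$ of $V_0$ and $W_0 := \spano_\R(A_1 v_1, A_1 v_2)$, is represented by
\begin{align*}
M(\xi) = \begin{pmatrix} \xi_1 - \alpha \xi_2 & -\beta \xi_2 \\ \beta \xi_2 & \xi_1 - \alpha \xi_2 \end{pmatrix}, \qquad \det M(\xi) = (\xi_1 - \alpha \xi_2)^2 + \beta^2 \xi_2^2,
\end{align*}
which is strictly positive on $\R^2 \setminus \{0\}$ since $\beta \neq 0$. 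Therefore $W_0 \subseteq \A[\xi](V)$ for every $\xi \neq 0$, and any nonzero element of $W_0$ witnesses failure of cancellation.

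The main technical nuisance, in each case, is ruling out the degenerate positions of the complex kernel data needed to extract the required pair of real objects---two projectively distinct conjugate roots in case (a); two $\R$-independent pairs $(v_1, v_2)$ and $(A_1 v_1, A_1 v_2)$ in case (b)---but each such degeneracy is eliminated by testing ellipticity on a carefully chosen real $\xi$.
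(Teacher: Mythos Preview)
Your proof is correct and follows essentially the same approach as the paper's. In case~(a) the arguments are identical; in case~(b) both proofs split the complex kernel data into real and imaginary parts and show that the resulting two-dimensional real subspace lies in every $\A[\xi](V)$, the only difference being that the paper works directly in the basis $(\Re\zeta,\Im\zeta)$ of $\R^2$ and obtains the conclusion via the one-line identity $\A[a\,\Re\zeta+b\,\Im\zeta](a v_1+b v_2)=(a^2+b^2)\A[\Re\zeta]v_1$, whereas you first normalize $\zeta$ and then compute a $2\times2$ matrix.
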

\begin{proof}
Suppose that \ref{it:ECimpliesFDN_a} holds. Since $N=1$ and $\A$ is not $\C$--elliptic, the homogeneous, quadratic, scalar polynomials $(\A[\xi])_l$, $l=1\ldots m$, must have a common complex root. This root cannot be real, as $\A$ is real--elliptic. It follows that $(\A[\xi])_l$ are all multiples of the same quadratic polynomial $P:\R\rightarrow\R$, so that $\A[\xi]v=vP(\xi)w_0$ for all $v\in V\simeq\R$ and some $w_0\in W\setminus\{0\}$. It is clear then that $\A[\xi](V)=\R w$ for all $\xi\neq0$. We next assume that \ref{it:ECimpliesFDN_b} holds. Since $\A$ is elliptic, there exist linearly independent $\xi,\eta\in\R^2$, $v,w\in\R^N$ such that $\A[\xi]v=\A[\eta]w$ and $\A[\xi]w=-\A[\eta]v$. We also have that any $\zeta\in\R^2$ can be written as $\zeta=a\xi+b\eta$. We put $v_\zeta:=a v+b w$. It follows that 
\begin{align*}
\A[\zeta]v_\zeta=\A[a\xi+b\eta](a v+b w)=(a^2+b^2)\A[\xi]v,
\end{align*}
so that $\bigcap_{\zeta\in\R^2\setminus\{0\}}\A[\zeta](V)\ni\A[\xi]v\neq0$.
\end{proof}
We conclude this section with a minor curiosity: we can append the proof above by taking $w_\zeta:=b v-a w$ and obtain $\bigcap_{\zeta\neq0}\A[\zeta](V)\supset\{\A[\xi]v,\A[\xi]w\}$. Therefore, if $n=2$, $k=1$, and $\A$ is elliptic but not cancelling, then 
\begin{align*}
\dim\bigcap_{\zeta\neq0}\A[\zeta](V)\geq2.
\end{align*}
This is no longer the case in higher dimensions, as can be seen by considering the first order operator $\A u=(\di u,\,\curl u)$ for $u\colon\R^n\rightarrow\R^n$, $n\geq3$.
\subsection{Insufficiency of EC}\label{sec:EC>emb}
We next give examples of first order EC operators and domains $\Omega\subset\R^n$ for which the Sobolev--type embedding fails. Firstly, we pause to compare the embeddings in Theorem~\ref{thm:main}\ref{it:main_b}, \ref{it:main_c} with \textsc{Van Schaftingen}'s homogeneous embedding $\dot{\sobo}{^{\A,1}}(\R^n)\hookrightarrow\lebe^{\frac{n}{n-1}}(\R^n,V)$. For elliptic $\A$, the latter embedding is equivalent to 
\begin{align}\label{eq:zerotraceemb}
\sobo^{\A,1}_0(\ball)\hookrightarrow\lebe^\frac{n}{n-1}(\ball,V)
\end{align}
(see Lemma~\ref{lem:embimpliesEC} for a scaling argument) and it can easily be shown that, in the absence of cancellation, we can still prove by means of a Green's formula and boundedness of Riesz potentials that $\sobo^{\A,1}_0(\ball)\hookrightarrow\lebe^p(\ball,V)$ for any $1\leq p < n/(n-1)$ (see Lemma~\ref{lem:zerotraceemb}). Here $\sobo^{\A,p}_0(\ball)$ is defined as the closure of $\hold_c^\infty(\ball,V)$ in the (semi--)norm $u\mapsto\|\A u\|_{\lebe^p}$. The situation is dramatically different as far as $\lebe^p$--embeddings of $\sobo^{\A,1}(\ball)$ are concerned. By Theorem~\ref{thm:main}, if the critical embedding 
\begin{align}\label{eq:ourembedding}
\sobo^{\A,1}(\ball)\hookrightarrow\lebe^\frac{n}{n-1}(\ball,V)
\end{align}
fails, then no uniform higher integrability estimate is possible. The difference can be even sharper: for EC, non--FDN operators there are maps in $\sobo^{\A,1}(\ball)$ that have no higher integrability, so the homogeneous embedding \eqref{eq:zerotraceemb} can hold even if the inhomogeneous \eqref{eq:ourembedding} fails completely. We highlight that the main difference between $\sobo^{\A,1}(\ball)$ and $\sobo^{\A,1}_0(\ball)$ lies in the traces, which are integrable if and only if $\A$ has FDN \cite{BDG}. 

The existence of elliptic and cancelling $\A$, domains $\Omega$, and of maps $u\in\sobo^{\A,1}(\Omega)$ that are in no $\lebe^p(\Omega,V)$, $p>1$, follows from Counterexample~\ref{ex:EC>FDN} above and the next Lemma, which is a strengthened version of the strict inclusion of (weighted) Bergman spaces generalized to elliptic, non--FDN operators.
\begin{lemma}\label{lem:EC>emb}
Let $k=1$ and $\A$ as in \eqref{eq:A} be elliptic but \emph{not} have FDN, so there exist linearly independent $\eta_1,\eta_2\in\R^n$ such that $\A[\eta_1+\imag\eta_2]$ has non--trivial kernel in $V+\imag V$. Assume that $\eta_1,\eta_2$ are orthonormal. If any of the following holds:
\begin{enumerate}
\item\label{itm:cylinder} $\Omega:=\ball_{\spano\{\eta_1,\eta_2\}}\times[0,1]^{n-2}$,
\item\label{itm:ball} $\Omega:=\ball$,
\end{enumerate}
then there exists smooth $u\in\lebe^1\setminus\bigcup_{p>1}\lebe^p(\Omega,V)$ such that $\A u=0$.
\end{lemma}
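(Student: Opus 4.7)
The plan is to construct $u$ from a holomorphic function on the unit disk $\mathbb{D}$, following the recipe in the proof of Proposition \ref{prop:FDNiffTypeC}. By hypothesis, there exist $\xi := \eta_1 + \imag\eta_2 \in \C^n\setminus\{0\}$ and $v = v_1 + \imag v_2 \in (V+\imag V)\setminus\{0\}$ with $\A[\xi]v = 0$. A short check using real ellipticity of $\A$ shows that $v_1, v_2 \in V$ are linearly independent: indeed, $v_2 = \lambda v_1$ with $\lambda \in \R$ would yield $(1+\imag\lambda)\A[\xi]v_1 = 0$, hence $\A[\eta_1]v_1 = 0 = \A[\eta_2]v_1$, contradicting injectivity of $\A[\eta_1]$ on $V$; the cases $v_1=0$ or $v_2=0$ are handled analogously. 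For any holomorphic $f\colon\mathbb{D}\to\C$, I would set
\begin{align*}
u_f(x) := f(\xi\cdot x)\,v,\qquad u := \Re u_f = \Re f(\xi\cdot x)\,v_1 - \Im f(\xi\cdot x)\,v_2.
\end{align*}
Since $\eta_1,\eta_2$ are orthonormal, $\xi\cdot x \in \mathbb{D}$ for every $x \in \Omega$ in either case, so $u$ is $V$-valued and smooth on $\Omega$; the real part of the identity $\A u_f = 0$ (which holds by the same computation as in Proposition \ref{prop:FDNiffTypeC}) gives $\A u = 0$.

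The linear independence of $v_1, v_2$ renders $(a, b)\mapsto av_1 - bv_2$ an injective linear map $\R^2\hookrightarrow V$, whence there exist $c, C > 0$ with $c\,|f(\xi\cdot x)| \leq |u(x)| \leq C\,|f(\xi\cdot x)|$. By Fubini in case (a), and by slicing $\ball$ along fibers of the projection $x\mapsto (\eta_1\cdot x, \eta_2\cdot x)$, which are $(n-2)$-dimensional Euclidean balls of radius $\sqrt{1-|z|^2}$, in case (b), one computes
\begin{align*}
\int_\Omega |u(x)|^p\dif x\,\simeq\,\int_\mathbb{D}|f(z)|^p(1-|z|^2)^\alpha\dif A(z),
\end{align*}
with $\alpha := 0$ in case \ref{itm:cylinder} and $\alpha := (n-2)/2$ in case \ref{itm:ball}. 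The problem therefore reduces to exhibiting a holomorphic $f \in A^1_\alpha(\mathbb{D})\setminus \bigcup_{p>1}A^p_\alpha(\mathbb{D})$.

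The main obstacle is precisely this last step, since no single power $(1-z)^{-\beta}$ works: the threshold $p\beta = 2+\alpha$ strictly separates $p=1$ from $p>1$. I would instead take, with the principal branch,
\begin{align*}
f(z) := \sum_{k=2}^\infty \frac{(1-z)^{-(2+\alpha)+1/k}}{k^3}.
\end{align*}
Local uniform convergence on $\mathbb{D}\setminus\{1\}$ gives holomorphy of $f$ on $\mathbb{D}$, and a direct integration in the coordinate $s := |1-z|$ yields $\|(1-z)^{-(2+\alpha)+1/k}\|_{A^1_\alpha} = O(k)$, so that $f\in A^1_\alpha$ by the triangle inequality. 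For the negative conclusion, restrict integration to a narrow cone $\Gamma\subset\mathbb{D}$ with apex at $z=1$ and half--opening angle $\pi/(4(2+\alpha))$ about the real direction; on $\Gamma$, for every $\beta\leq 2+\alpha$, the principal branch satisfies $\Re(1-z)^{-\beta}\geq \tfrac{1}{\sqrt{2}}|1-z|^{-\beta}$, so every summand of $f$ has positive real part there. Since $|f|\geq \Re f \geq (\sqrt{2}\,k^3)^{-1}|1-z|^{-(2+\alpha)+1/k}$ on $\Gamma$ for each $k$, and any given $p > 1$ admits an integer $k \geq p/((p-1)(2+\alpha))$ with $p((2+\alpha) - 1/k)\geq 2+\alpha$, the $k$-th term alone forces $\int_\Gamma|f|^p(1-|z|^2)^\alpha\dif A = \infty$, completing the argument.
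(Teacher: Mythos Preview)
Your argument is correct and follows the same overall skeleton as the paper's proof (build $u$ from a holomorphic $f$ via $u_f(x)=f(\xi\cdot x)v$, then reduce $\|u\|_{\lebe^p(\Omega)}$ to a weighted Bergman norm of $f$ on $\mathbb{D}$), but it differs in two genuine respects. First, to produce $f\in A^1_\alpha(\mathbb{D})\setminus\bigcup_{p>1}A^p_\alpha(\mathbb{D})$ the paper appeals to a Baire category argument (Lemma~\ref{lem:baire}), using only the strict inclusion $A^q_\alpha\subsetneq A^p_\alpha$ for $q>p$; you instead give an explicit holomorphic function as a series of powers of $1-z$ with exponents approaching the critical one, and verify membership and non-membership by hand via cone estimates. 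Second, you insert the observation that real ellipticity forces $v_1,v_2$ to be linearly independent, which lets you take $u=\Re u_f$ directly with $|u|\simeq|f(\xi\cdot x)|$; the paper instead computes $\int_\Omega|u_f|^p$ and then argues that one of $\Re u_f,\Im u_f$ must lie in $\lebe^1\setminus\bigcup_{p>1}\lebe^p$. Your route is more constructive and slightly cleaner on the real/imaginary part issue; the paper's Baire argument is shorter and more robust (it requires no computation with specific functions).
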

\begin{proof}
We write  $\xi=\eta_1+\imag\eta_2$, and write $D$ for the unit disc in $\spano\{\eta_1,\eta_2\}$. We stress that each $\eta_j$ must be non--zero by ellipticity of $\A$, so $D$ is indeed a non--degenerate disc. We also know from the proof of Proposition~\ref{prop:FDNiffTypeC} that there exist non--zero $v\in V+\imag V$ such that $\A[\xi]v=0$, and one can show by direct computation that for any holomorphic function $f$ we can define $u_f(x):=f(x\cdot\xi)v$, for which $\A\Re u_f=0=\A\Im u_f$. We have that 
\begin{align*}
\int_{\Omega}|u_f(x)|^p\dif x&=\int_D\int_{(\eta+\{\eta_1,\eta_2\}^\perp)\cap\Omega}|f(\eta\cdot\xi)|^p|v|^p\dif\mathcal{H}^{n-2}\dif\mathcal{H}^2(\eta)\\
&=|v|^p\int_D |f(\eta\cdot\xi)|^p\mathcal{H}^{n-2}\left((\eta+\{\eta_1,\eta_2\}^\perp)\cap\Omega\right)\dif\mathcal{H}^2(\eta)
\end{align*}
We now make the case distinction. Assume \ref{itm:cylinder} holds, so
\begin{align*}
\int_{\Omega}|u_f(x)|^p\dif x&=|v|^p\int_D |f(\eta\cdot\xi)|^p\dif\mathcal{H}^2(\eta)=\int_{\mathbb{D}}|f(z)|^p\dif\mathcal{L}^2(z).
\end{align*}
Assume \ref{itm:ball}, so
\begin{align*}
\int_{\Omega}|u_f(x)|^p\dif x&=c(n)|v|^p\int_D |f(\eta\cdot\xi)|^p(1-|\eta|^2)^\frac{n-2}{2}\dif\mathcal{H}^2(\eta)\\
&=c(n)|v|^p\int_{\mathbb{D}}|f(z)|^p(1-|z|^2)^\frac{n-2}{2}\dif\mathcal{L}^2(z),
\end{align*}
where $c(n)$ denotes the volume of the $(n-2)$--dimensional ball. By Lemma~\ref{lem:baire} below, we can choose $f\in A^1_{\alpha}(\mathbb{D})\setminus\bigcup_{p>1}A^p_\alpha(\mathbb{D})$ for $\alpha=0$ and $\alpha=(n-2)/2$ respectively, so that both $\Re u_f$ and $\Im u_f$ are in $\lebe^1(\ball,V)$, but one of them is in not in any other $\lebe^p$. This proves the claim.
\end{proof}
The following Lemma is also feasible by direct computation, but we prefer to give an abstract argument for the sake of brevity.
\begin{lemma}\label{lem:baire}
For all $1\leq p<\infty$, $\alpha\geq0$ the set $A^p_\alpha(\mathbb{D})\setminus\bigcup_{q>p}A^q_\alpha(\mathbb{D})$ is non--empty.
\end{lemma}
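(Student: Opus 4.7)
The plan is to apply the Baire category theorem in $A^p_\alpha(\mathbb{D})$, which is a Banach space by the preliminaries. First I would observe, via H\"older's inequality and the fact that the weight $(1-|z|^2)^\alpha$ is integrable on $\mathbb{D}$ (which uses $\alpha\geq 0$), that for every $q>p$ there is a continuous inclusion $A^q_\alpha(\mathbb{D})\hookrightarrow A^p_\alpha(\mathbb{D})$.

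Next, for positive integers $n,m$, set $B_{n,m}:=\{f\in A^{p+1/n}_\alpha(\mathbb{D}):\|f\|_{A^{p+1/n}_\alpha}\leq m\}$, regarded as a subset of $A^p_\alpha(\mathbb{D})$. Each $B_{n,m}$ is closed in the $A^p_\alpha$-topology: point evaluation is a continuous linear functional on $A^p_\alpha$ (standard sub--mean--value estimate for the subharmonic function $|f|^p$ on discs contained in $\mathbb{D}$), so $A^p_\alpha$-convergence of holomorphic functions implies locally uniform convergence on $\mathbb{D}$, and Fatou's lemma then yields lower semicontinuity of $\|\cdot\|_{A^{p+1/n}_\alpha}$ with respect to $A^p_\alpha$-convergence. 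Since $\bigcup_{q>p}A^q_\alpha(\mathbb{D})=\bigcup_{n,m\geq 1}B_{n,m}$ is a countable union of closed subsets of $A^p_\alpha(\mathbb{D})$, if this union exhausted $A^p_\alpha(\mathbb{D})$ then by Baire's theorem some $B_{n,m}$ would have non--empty interior.

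A translation to the origin and a scaling argument, using homogeneity of $\|\cdot\|_{A^{p+1/n}_\alpha}$ and the triangle inequality, then forces a bounded inclusion $A^p_\alpha(\mathbb{D})\hookrightarrow A^{p+1/n}_\alpha(\mathbb{D})$. Combined with the H\"older inclusion in the reverse direction, the two norms would be equivalent. To derive a contradiction I would exhibit an explicit element of $A^p_\alpha\setminus A^{p+1/n}_\alpha$ by means of the one--parameter family $f_s(z):=(1-z)^{-s}$ together with the classical computation $\int_{\mathbb{D}}(1-|z|^2)^\alpha|1-z|^{-b}\dif\mathcal{L}^2(z)<\infty$ if and only if $b<2+\alpha$; choosing any $s\in\bigl((2+\alpha)/(p+1/n),(2+\alpha)/p\bigr)$ yields $sp<2+\alpha\leq s(p+1/n)$, hence $f_s\in A^p_\alpha\setminus A^{p+1/n}_\alpha$, contradicting the putative norm equivalence.

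The main obstacle is the closedness of each $B_{n,m}$ in $A^p_\alpha(\mathbb{D})$, which rests on continuity of point evaluation on Bergman spaces; everything else is routine once this is in hand. The Baire scheme is preferable to a direct construction because it avoids the need to sum infinitely many power functions of the form $(1-z)^{-s_k}$ with $s_k\uparrow(2+\alpha)/p$ (which could also be made to work but requires delicate bookkeeping on the divergence rates).
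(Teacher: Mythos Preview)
Your argument is correct and follows essentially the same Baire-category scheme as the paper: write $\bigcup_{q>p}A^q_\alpha$ as a countable union of closed norm-balls in $A^p_\alpha$, use Fatou (via pointwise convergence of a subsequence) for closedness, and derive a contradiction from the strict inclusion $A^{q}_\alpha\subsetneq A^{p}_\alpha$. The only noteworthy difference is that the paper imports the strict inclusion from \cite[Cor.~68]{ZZ} and then simply notes that closed balls in a \emph{proper} subspace are nowhere dense, whereas you supply the strict inclusion yourself via the explicit family $f_s(z)=(1-z)^{-s}$ and the standard integrability criterion $\int_{\mathbb{D}}(1-|z|^2)^\alpha|1-z|^{-b}\,\dif\mathcal{L}^2<\infty\iff b<2+\alpha$; your detour through ``non-empty interior $\Rightarrow$ bounded inclusion $A^p_\alpha\hookrightarrow A^{p+1/n}_\alpha$'' is just an unpacking of the same linear-algebra fact. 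Your version is thus a little longer but fully self-contained, while the paper's is shorter at the cost of an external reference.
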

\begin{proof}
We abbreviate $A^p:=A_\alpha^p(\mathbb{D})$. The proof relies on the strict inclusion $A^q\subsetneq A^p$ for $1\leq p<q<\infty$ proved in \cite[Cor.~68]{ZZ} and a Baire category argument. Assume that the result is false, so that by H\"older's Inequality we can find a sequence $q_j\downarrow p$ such that $A^p=\bigcup_j A^{q_j}$. For natural $l$, we define the sets $F_l^j:=\{f\in A^{q_j}\colon\|f\|_{A^{q_j}}^{q_j}\leq l\}$, which we claim to be closed in $A^p$. Let $f_m\in F^j_l$ converge to $f$ in $A^p$. By completeness of $A^p$, we have, by Fatou's Lemma on a pointwise convergent, not relabelled subsequence that 
\begin{align*}
\int_{\mathbb{D}}|f|^{q_j}w_\alpha\dif\mathcal{L}^2\leq\liminf_{m\rightarrow\infty}\int_{\mathbb{D}}|f_m|^{q_j}w_\alpha\dif\mathcal{L}^2\leq l,
\end{align*}
so that indeed $f\in F_l^j$. Since $A^{q_j}$ is a proper subspace of $A^p$, it follows that the sets $F^j_l$ are nowhere dense in $A^p$. It remains to notice that then $A^p=\bigcup_{j,l}F^j_l$, which contradicts completeness of $A^p$ by Baire's Theorem.
\end{proof}
\subsection{Comparison to the {Bourgain}--{Brezis} condition}
We recall here the assumptions on $\A$ (sufficient for EC) under which a general inequality of the type \eqref{eq:VS} was first proved in \cite{BB07}, in the case $k=1$ and $V=\R^n$. In their notation, we write $(\A u)_s=\langle L^{(s)},\nabla u\rangle$ for matrices $L^{(s)}\in\R^{n\times n}$, $s=1\ldots m$. It is shown in \cite[Thm.~25]{BB07}, that if an operator $\A$ is elliptic such that $\det L^{(s)}=0$ for $s=1\ldots m$, then \eqref{eq:VS} holds. It is clear (either by \cite[Thm.~1.3]{VS} or by direct computation) that such operators are cancelling. By Lemma~\ref{lem:EC=FDN}, if $n=2$, we have that such $\A$ also has FDN, and thus satisfies \eqref{eq:ourembedding}. However, if $n\geq3$, we show that $\A_{1,n}$ as in Counterexample~\ref{ex:EC>FDN} with $N=n$ satisfies the \textsc{Bourgain}--\textsc{Brezis} condition, but do not have FDN. We explicitly write down the matrices $L^{(s)}$ if $n=3$, the general case being a simple exercise:
\begin{align*}
&\left(\begin{array}{ccc}1 & 0 & 0 \\ 0 & -1 & 0\\ 0 & 0&0\end{array}\right),
\left(\begin{array}{ccc}0 & 1 & 0 \\ 1 & 0 & 0\\ 0 & 0&0\end{array}\right),
\left(\begin{array}{ccc}0 & 0 & 0 \\ 0 & 0 & 0\\ 1 & 0&0\end{array}\right),
\left(\begin{array}{ccc}0 & 0 & 0 \\ 0 & 0 & 0\\ 0 & 1&0\end{array}\right),\\
&\left(\begin{array}{ccc}0 & 0 & 0 \\ 0 & 0 & 0\\ 0 & 0&1\end{array}\right),
\left(\begin{array}{ccc}0 & 0 & 1 \\ 0 & 0 & 0\\ 0 & 0&0\end{array}\right),
\left(\begin{array}{ccc}0 & 0 & 0 \\ 0 & 0 & 1\\ 0 & 0&0\end{array}\right).
\end{align*}
By the reasoning in Section~\ref{sec:EC>emb}, with $\A=\A_{1,n}$, we have that $\dot{\sobo}{^{\A,1}}(\R^n)\hookrightarrow\lebe^{n/(n-1)}(\R^n)$, but there are maps in $\sobo^{\A,1}(\ball)$ that have no higher integrability.
\section{The Sobolev--type Embedding on Domains}\label{sec:proof}
\subsection{A Jones--type Extension}
In this section we complete the proof of Theorem~\ref{thm:tools} with the following generalization:
\begin{theorem}\label{thm:extension}
Let $\A$ as in \eqref{eq:A} have FDN, $1\leq p <\infty$, $\Omega\subset\R^n$ be a 
bounded Lipschitz domain. Then there exists a bounded, linear extension operator  
\begin{align*}
E_\Omega\colon\sobo^{\A,p}(\Omega)\rightarrow\Vsob^{\A,p}(\R^n).
\end{align*}
\end{theorem}
To prove this result we use \textsc{Jones}' method of extension developed in the celebrated paper \cite{Jones}. We stress that the need to use this technically involved method arose due to unboundedness of singular integrals on $\lebe^1$ (cp. Lemma~\ref{lem:extp>1} for $p>1$) and not out of our desire to deal with very rough domains, which are not explicitly covered here. Recall that \textsc{Jones}'s original idea was to decompose a small neighbourhood of $\partial\Omega$ into small cubes and assign suitable polynomials of degree at most $k-1$ to each cube. Inspired by \cite[Sec.~4.1-2]{BDG}, we assign elements of $\ker\A$ on such cubes, as explained below. We stress that the fact that $\ker \A$ consists of a finite dimensional space of polynomials is essential for the construction to work. With this crucial modification, the streamlined proof that we include below mostly follows the same lines as in \cite[Sec.~2-3]{Jones}, where all the details we omit can be found.  What deserves some special attention is a Poincar\'e--type inequality, which is interesting in its own right, as it implies that $\sobo^{\A,p}(\ball)\simeq\Vsob^{\A,p}(\ball)$ for FDN operators (see Lemma~\ref{lem:sob_variants}). We present it below and mention that it is a generalization of the results in \cite[Sec.~1.1.11]{Mazya} and \cite[Sec.~3]{BDG}. We extend the notation presented in Theorem~\ref{thm:Ka} by $\pi_\Omega u:=\Pi \mathcal{P} u$, where $\Pi$ denotes the $\lebe^2$--orthogonal projection of $\R_d[x]^V$ onto $\ker\A$.
\begin{proposition}[Poincar\'e--type inequality]\label{prop:poinc}
Let $\A$ as in \eqref{eq:A} have FDN, $1\leq p\leq\infty$, $0\leq l<k$, and $\Omega\subset\R^n$ be a star--shaped domain with respect to a ball. Then there exists $c>0$ such that
\begin{align}\label{eq:poinc}
\|\nabla^l(u-\pi_\Omega u)\|_{p,\Omega}\leq c(\diam\Omega)^{k-l}\|\A u\|_{p.\Omega}
\end{align}
for all $u\in\hold^\infty(\bar{\Omega},V)$.
\end{proposition}
Interestingly, $\A$ having FDN is not necessary for the estimate \eqref{eq:poinc} to hold, as can be seen from \cite{Fuchs1}. We believe that ellipticity alone is sufficient for the estimate to hold and intend to pursue this in future work.
\begin{proof}
We start with $\|\nabla^l(u-\pi_\Omega u)\|_{p,\Omega}\leq\|\nabla^l(u-\mathcal{P} u)\|_{p,\Omega}+\|\nabla^l(\mathcal{P}u-\pi_\Omega u)\|_{p,\Omega}$, and estimate both terms. 
We have by the growth conditions on $K$ from Theorem~\ref{thm:Ka} that
\begin{align*}
\|\nabla^l(u-\mathcal{P} u)\|_{p,\Omega}&=\left(\int_{\Omega}\left\vert\int_{\Omega}\nabla^l_x K(x,y)\A u(y)\dif y\right\vert^p\dif x\right)^{\frac{1}{p}}\\
&\lesssim\left(\int_{\Omega}\left(\int_{\Omega}\dfrac{|\A u(y)|}{|x-y|^{n+l-k}}\dif y\right)^p\dif x\right)^{\frac{1}{p}}.
\end{align*}
Now consider the case $n+l>k$, so that we can estimate the RHS using Theorem~\ref{thm:anal_harm}\ref{itm:riesz_domains}:
\begin{align*}
\|I_{k-l}(|\A u|)\|_{p,\Omega}\lesssim(\diam\Omega)^{k-l}\|\A u\|_{p,\Omega}.
\end{align*}
If, on the other hand, $n+l\leq k$, with $R=\diam\Omega$, we perform the elementary estimation
\begin{align*}
\left(\int_{\Omega}\left(\int_{\Omega}|\A u(y)||x-y|^{k-l-n}\dif y\right)^p \dif x\right)^{\frac{1}{p}}&\leq R^{k-l-n}\left(\int_\Omega|\A u(y)|\dif y\right)\left(\int_\Omega\dif x\right)^\frac{1}{p}\\
&\leq R^{k-l-n}\cdot R^{n(p-1)/p}\cdot\|\A u\|_{p,\Omega}\cdot R^{n/p}\\
&=R^{k-l}\|\A u\|_{p,\Omega},
\end{align*}
where obvious modifications have to be made if $p=\infty$.

We then note that $P\mapsto\|P-\Pi P\|_{p,\Omega}$ and $P\mapsto\|\A P\|_{p,\Omega}$ respectively define a semi--norm and a norm on the finite dimensional vector space $\R_d[x]^V/\ker\A$, so that the second term $\|\nabla^l(\mathcal{P}u-\pi_\Omega u)\|_{p,\Omega}\lesssim\|\A\mathcal{P}u\|_{p,\Omega}$, with a domain dependent constant. We recall from the original proof of Theorem~\ref{thm:Ka} that $\mathcal{P}u$ is the averaged Taylor polynomial
\begin{align*}
\mathcal{P}u(x)=\int_{\Omega}\sum_{|\alpha|\leq d} \frac{\partial^\alpha_y\left((y-x)^\alpha w(y)\right)}{\alpha!}u(y)\dif y=\int_{\Omega}\sum_{|\alpha|\leq d}\frac{(x-y)^{\alpha}}{\alpha!} w(y)\partial^\alpha u(y)\dif y,
\end{align*}
where the weight $w$ is a smooth map supported in the ball with respect to which $\Omega$ is star--shaped such that $\int w=1$. One can show by direct computation that averaged Taylor polynomials ``commute'' with derivatives, in the sense that
\begin{align*}
\A\mathcal{P}u=\int_{\Omega} \sum_{|\beta|\leq d-k} \frac{\partial^\beta_y\left((y-\cdot\,)^\beta w(y)\right)}{\alpha!}\A u(y)\dif y.
\end{align*}
It is then obvious that $\|\A\mathcal{P}u\|_{p,\Omega}\lesssim\|\A u\|_{p,\Omega}$. The precise dependence of the constant on the domain follows by standard scaling arguments.
\end{proof}
We next introduce the framework required to prove Theorem~\ref{thm:extension}. We use the same Whitney coverings as in \cite{Jones}, which we recall for the reader's convenience. Firstly recall the Decomposition Lemma introduced in \cite{Whitney}, that any open subset $\Omega\subset\R^n$ can be covered with a countable collection $\mathcal{W}_1:=\{S_j\}$ of closed dyadic cubes satisfying
\begin{enumerate}
\item[$(\mathrm{D}_1)$] $\ell(S_j)/4\leq \ell(S_l)\leq 4 \ell(S_j)$ if $S_j\cap S_l\neq\emptyset$,
\item[$(\mathrm{D}_2)$] $\inte S_j\cap\inte S_l=\emptyset$ if $j\neq l$,
\item[$(\mathrm{D}_3)$] $\ell(S_j)\leq\mathrm{dist}(S_j,\partial\Omega)\leq 4\sqrt{n}\ell(S_j)$ for all $j$,
\end{enumerate}
where $\ell(Q)$ denotes the side--length of a cube $Q$. We henceforth assume that $\Omega$ is as in the statement of Theorem~\ref{thm:extension}
. We further consider a Whitney decomposition $\mathcal{W}_2:=\{Q_l\}$ of $\R^n\setminus\bar{\Omega}$, and further define $\mathcal{W}_3:=\{Q\in\mathcal{W}_2\colon \ell(Q)\leq\varepsilon\delta/(16n)\}$. We reflect each cube $Q\in\mathcal{W}_3$ to a non--unique interior cube $Q^*\in\mathcal{W}_1$ such that 
\begin{enumerate}
\item[$(\mathrm{R}_1)$] $\ell(Q)\leq\ell(Q^*)\leq4\ell(Q)$,
\item[$(\mathrm{R}_2)$] $\mathrm{dist}(Q,Q^*)\leq C\ell(Q)$,
\end{enumerate}
where above and in the following $C$ denotes a constant depending on $k,p,n,\varepsilon,\delta$ only; additional dependencies will be specified. The non--uniqueness causes no issues, as one can show that
\begin{enumerate}
\item[$(\mathrm{R}_3)$] For any two choices $S_1,S_2$ of $Q^*$, we have $\mathrm{dist}(S_1,S_2)\leq C\ell(Q)$,
\item[$(\mathrm{R}_4)$] For any $S\in\mathcal{W}_1$, there are at most $C$ cubes $Q\in\mathcal{W}_3$ such that $S=Q^*$,
\item[$(\mathrm{R}_5)$] For any adjacent $Q_1,Q_2\in\mathcal{W}_3$, we have $\mathrm{dist}(Q_1^*,Q_2^*)\leq C\ell(Q_1)$.
\end{enumerate}
For detail on theses basic properties of the reflection see \cite[Lem.~2.4-7]{Jones}. We conclude the presentation of the decomposition by quoting the following:
\begin{lemma}[{\cite[Lem.~2.8]{Jones}}]
For any adjacent cubes $Q_1,Q_2\in\mathcal{W}_3$, there is a chain $\mathcal{C}(Q^*_1,Q^*_2):=\{Q_1^*=:S_1,S_2,\ldots S_m:=Q^*_2\}$ of cubes in $S_j\in\mathcal{W}_1$, i.e., such that $S_j$ and $S_{j+1}$ touch 
for all $j$, and $m\leq C$.
\end{lemma}
We proceed to define the extension operator 
\begin{align*}
E_\Omega u:=
\begin{cases}
u&\text{in }\Omega\\
\sum_{Q\in\mathcal{W}_3}\varphi_Q \pi_{Q^*}u&\text{in }\R^n\setminus\bar{\Omega},
\end{cases}
\end{align*}
where $\{\varphi_Q\}_{Q\in\mathcal{W}_3}\subset\hold^\infty(\R^n)$ is a partition of unity such that for all $Q\in\mathcal{W}_3$ we have
\begin{enumerate}
\item[$(\mathrm{P}_1)$] $0\leq\varphi_Q\leq1$ and $\sum_{Q\in\mathcal{W}_3}\varphi_Q=1$ in $\bigcup\mathcal{W}_3$,
\item[$(\mathrm{P}_2)$] $\spt\varphi_Q\subset17/16Q$, where $\lambda Q$ denotes the homothety of $Q$ by $\lambda$ about its centre,
\item[$(\mathrm{P}_3)$] $|\nabla^l\varphi_Q|\leq C\ell(Q)^{-l}$ for all $0\leq l\leq k$.
\end{enumerate}
Our proof mostly follows the lines of the original proof. We first prove an estimate on chains in $\mathcal{W}_1$, then suitably bound the norms of the derivatives in the exterior domains, and we conclude by showing that the extension has weak derivatives in full--space. We warn the reader that in the remainder of this section we may use the properties of the decomposition, reflection and partition of unity without mention.
\begin{lemma}[{\cite[Lem.~3.1]{Jones}}]\label{lem:chain}
Let $\mathcal{C}:=\{S_1,\ldots S_m\}\subset\mathcal{W}_1$ be a chain. Then for $0\leq l<k$ we have
\begin{align*}
\|\nabla^l(\pi_{S_1}u-\pi_{S_m}u)\|_{p,S_1}\leq C(m)\ell(S_1)^{k-l}\|\A u\|_{p,\cup\mathcal{C}}
\end{align*}
for all $u\in\hold^\infty(\bar{\Omega},V)$.
\end{lemma}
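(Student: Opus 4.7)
The plan is to mirror the structure of Jones' original chain lemma \cite[Lem.~3.1]{Jones} but with polynomials in $\R_d[x]^V$ replaced by polynomials in $\ker\A$. The basic identity is the telescope
\begin{align*}
\pi_{S_1}u-\pi_{S_m}u=\sum_{j=1}^{m-1}\bigl(\pi_{S_j}u-\pi_{S_{j+1}}u\bigr),
\end{align*}
so it suffices to bound each consecutive difference in $\|\nabla^l\cdot\|_{p,S_1}$ and sum. To make the per--step estimate rigorous we exploit three features: the Poincar\'e inequality of Proposition \ref{prop:poinc}, a Markov--type inverse inequality for the finite--dimensional space $\ker\A$, and the star--shapedness of the union of two face--adjacent Whitney cubes.

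For the per--step estimate I fix $j$ and set $T_j:=S_j\cup S_{j+1}$. Because $S_j$ and $S_{j+1}$ share an $(n-1)$--face and satisfy $\ell(S_j)/4\leq\ell(S_{j+1})\leq 4\ell(S_j)$ by $(\mathrm{D}_1)$, the union $T_j$ is star--shaped with respect to a ball centered on the shared face of radius comparable to $\ell(S_j)$. Proposition \ref{prop:poinc} thus yields simultaneously
\begin{align*}
\|u-\pi_{S_j}u\|_{p,S_j}&\leq C\ell(S_j)^k\|\A u\|_{p,S_j},\\
\|u-\pi_{S_{j+1}}u\|_{p,S_{j+1}}&\leq C\ell(S_{j+1})^k\|\A u\|_{p,S_{j+1}},\\
\|u-\pi_{T_j}u\|_{p,T_j}&\leq C\ell(T_j)^k\|\A u\|_{p,T_j}.
\end{align*}
Since $\pi_{S_j}u-\pi_{S_{j+1}}u\in\ker\A$, which by Theorem \ref{thm:Ka} is a finite--dimensional space of polynomials of degree at most $d(\A)$, norm equivalence on $\ker\A$ together with scaling yields the Markov--type reverse inequality $\|\nabla^l P\|_{p,S}\leq C\ell(S)^{-l}\|P\|_{p,S}$ for $P\in\ker\A$ on any cube $S$. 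Triangulating through $\pi_{T_j}u$, bounding each piece on whichever cube ($S_j$, $S_{j+1}$, or $T_j$) admits a direct Poincar\'e estimate, and applying the Markov inequality to move $\nabla^l$ onto the polynomial, I obtain
\begin{align*}
\|\nabla^l(\pi_{S_j}u-\pi_{S_{j+1}}u)\|_{p,S_j}\leq C\ell(S_j)^{k-l}\|\A u\|_{p,T_j}.
\end{align*}

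To close the argument I sum the telescope and transport each term from $S_j$ to $S_1$. Since the chain has length at most $m$ and consecutive side--lengths are comparable, every $S_j$ has $\ell(S_j)\sim\ell(S_1)$ and $\dista(S_j,S_1)\lesssim\ell(S_1)$ with constants depending only on $m$. For polynomials of degree $\leq d(\A)$, norms on one such cube control norms on another via norm equivalence on the finite--dimensional space $\ker\A$ (after a scaling/translation to a reference cube), so that $\|\nabla^l Q\|_{p,S_1}\leq C(m)\|\nabla^l Q\|_{p,S_j}$ for $Q\in\ker\A$. Using the pairwise overlap bound $\sum_j\mathbf{1}_{T_j}\leq 2\mathbf{1}_{\cup\mathcal{C}}$ and H\"older's inequality to control $\sum_j\|\A u\|_{p,T_j}$, the asserted estimate follows.

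The main obstacle is the per--step bound, because Proposition \ref{prop:poinc} controls $u-\pi_S u$ only on the same $S$ on which $\pi_S$ is defined, whereas the natural quantity $\pi_{S_j}u-\pi_{S_{j+1}}u$ couples projections on two different cubes. The resolution is the introduction of $T_j$ together with the (non--trivial but elementary) observation that a union of two face--adjacent cubes is star--shaped with respect to a ball of comparable radius; this provides a common reference projection $\pi_{T_j}u$ through which both $\pi_{S_j}u$ and $\pi_{S_{j+1}}u$ can be compared, at which point FDN (ensuring $\dim\ker\A<\infty$) supplies the polynomial inequalities needed to move derivatives and change the integration domain.
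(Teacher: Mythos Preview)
Your proof is correct and follows essentially the same scheme as the paper: telescope, triangulate each step through $\pi_{S_j\cup S_{j+1}}u$, apply Proposition~\ref{prop:poinc} on $S_j$, $S_{j+1}$, and $S_j\cup S_{j+1}$, and use comparability of $\lebe^p$--norms of polynomials of degree $\leq d$ on adjacent Whitney cubes to transport estimates to $S_1$. Two small remarks: first, the Markov detour is unnecessary, since Proposition~\ref{prop:poinc} already delivers the estimate at level~$l$ directly (this is exactly how the paper concludes); second, when $\ell(S_j)\neq\ell(S_{j+1})$ the union $T_j$ is \emph{not} star--shaped with respect to a ball centred on the shared face, but it is with respect to a ball of radius $\sim\ell(S_j)$ placed just inside the larger cube at the height of the smaller cube's face, which is all you need.
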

\begin{proof}
We remark that $\lebe^p$--norms of polynomials of degree at most $d$ on adjacent cubes in $\mathcal{W}_1$ are comparable (see, e.g., \cite[Lem.~2.1]{Jones}). We get
\begin{align*}
\mathrm{LHS}&\leq \sum_{j=1}^{m-1}\|\nabla^l(\pi_{S_{j+1}}u-\pi_{S_j}u)\|_{p,S_1}\\
&\leq C(m) \sum_{j=1}^{m-1}\|\nabla^l(\pi_{S_{j+1}}u-\pi_{S_j\cup S_{j+1}}u)\|_{p,S_{j+1}}+\|\nabla^l(\pi_{S_j\cup S_{j+1}}u-\pi_{S_j}u)\|_{p,S_j}\\
&\leq C(m)\sum_{j=1}^{m-1}\left(\|\nabla^l(\pi_{S_{j+1}}u-u)\|_{p,S_{j+1}}+2\|\nabla^l(u-\pi_{S_j\cup S_{j+1}}u)\|_{p,S_j\cup S_{j+1}}\right.\\
&\left.+\|\nabla^l(u-\pi_{S_j}u)\|_{p,S_j}\right)
\end{align*}
and we can use the Poincar\'e--type inequality, Proposition~\ref{prop:poinc}, to conclude.
\end{proof}
\begin{lemma}[{\cite[Prop.~3.4]{Jones}}]\label{lem:localbounds}
For $1\leq p\leq\infty$, we have $\|E_\Omega u\|_{\Vsob^{\A,p}(\R^n\setminus\bar{\Omega})}\leq C \|u\|_{\sobo^{\A,p}(\Omega)}$ for all $u\in\hold^\infty(\bar{\Omega},V)$.
\end{lemma}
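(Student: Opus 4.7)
The plan is to follow \textsc{Jones}'s local--to--global strategy: for each $Q\in\mathcal{W}_3$ we estimate $\nabla^l E_\Omega u$ on $Q$ by quantities supported in a uniformly controlled neighbourhood of the reflected cube $Q^\ast\subset\Omega$, then sum using the finite overlap $(\mathrm{R}_4)$. Let $\mathcal{N}(Q):=\{Q'\in\mathcal{W}_3\colon\tfrac{17}{16}Q'\cap Q\neq\emptyset\}$, whose cardinality is bounded by a dimensional constant via $(\mathrm{P}_2)$ and $(\mathrm{D}_1)$, and note $\sum_{Q'\in\mathcal{N}(Q)}\varphi_{Q'}\equiv 1$ on $Q$ by $(\mathrm{P}_1)$. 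The decomposition to exploit is
\begin{align*}
E_\Omega u=\pi_{Q^\ast}u+\sum_{Q'\in\mathcal{N}(Q)}\varphi_{Q'}\bigl(\pi_{Q'^{\ast}}u-\pi_{Q^\ast}u\bigr)\qquad\text{on }Q,
\end{align*}
whose crucial algebraic feature is that each difference $\pi_{Q'^{\ast}}u-\pi_{Q^\ast}u$ lies in $\ker\A$.

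For the $\A$-bound on $Q$ the first summand is annihilated by $\A$, and the Leibniz expansion of the second, combined with $\A(\pi_{Q'^{\ast}}u-\pi_{Q^\ast}u)=0$, eliminates the top-order contribution and leaves only terms $\partial^\alpha\varphi_{Q'}\,\partial^{k-|\alpha|}(\pi_{Q'^{\ast}}u-\pi_{Q^\ast}u)$ with $1\leq|\alpha|\leq k$. Using $(\mathrm{P}_3)$ to get $|\partial^\alpha\varphi_{Q'}|\lesssim\ell(Q)^{-|\alpha|}$, the chain property $(\mathrm{R}_5)$ with uniformly bounded length, comparability of $\lebe^p$-norms of polynomials of bounded degree on comparable Whitney cubes, and Lemma \ref{lem:chain} at level $l=k-|\alpha|$ yields
\begin{align*}
\|\A E_\Omega u\|_{p,Q}\leq C\,\|\A u\|_{p,U(Q)},\qquad U(Q):=\bigcup_{Q'\in\mathcal{N}(Q)}\bigcup\mathcal{C}(Q^\ast,Q'^{\ast})\subset\Omega.
\end{align*}
Summing $p$-th powers (or taking suprema when $p=\infty$), the bounded overlap of the $U(Q)$ guaranteed by $(\mathrm{R}_4)$ delivers $\|\A E_\Omega u\|_{p,\R^n\setminus\bar\Omega}\leq C\|\A u\|_{p,\Omega}$.

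For $0\leq l\leq k-1$, the sum-term in the decomposition will be treated identically via $\nabla^l$-Leibniz and Lemma \ref{lem:chain}, absorbing a factor $\ell(Q)^{k-l}\leq(\diam\Omega)^{k-l}$ into $C$. The leading term $\|\nabla^l\pi_{Q^\ast}u\|_{p,Q}$ will be transferred to $Q^\ast$ by polynomial--norm comparability and split as
\begin{align*}
\|\nabla^l\pi_{Q^\ast}u\|_{p,Q^\ast}\leq\|\nabla^l u\|_{p,Q^\ast}+\|\nabla^l(u-\pi_{Q^\ast}u)\|_{p,Q^\ast},
\end{align*}
whose second summand Proposition \ref{prop:poinc} bounds by $C(\diam\Omega)^{k-l}\|\A u\|_{p,Q^\ast}$. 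Summing over $Q\in\mathcal{W}_3$ with bounded overlap of the cubes $Q^\ast$ then gives
\begin{align*}
\|\nabla^l E_\Omega u\|_{p,\R^n\setminus\bar\Omega}\leq C\bigl(\|\nabla^l u\|_{p,\Omega}+\|\A u\|_{p,\Omega}\bigr).
\end{align*}

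The main obstacle is the leftover $\|\nabla^l u\|_{p,\Omega}$, which is not part of the $\sobo^{\A,p}(\Omega)$-norm on the right. Here the FDN hypothesis enters decisively for a second time: Proposition \ref{prop:poinc} applied globally on $\Omega$ together with the equivalence of norms on the finite--dimensional space $\ker\A$ yields
\begin{align*}
\|\nabla^l u\|_{p,\Omega}\leq\|\nabla^l(u-\pi_\Omega u)\|_{p,\Omega}+\|\nabla^l\pi_\Omega u\|_{p,\Omega}\leq C\bigl(\|u\|_{p,\Omega}+\|\A u\|_{p,\Omega}\bigr),
\end{align*}
closing the loop. The case $l=0$ is identical, producing $\|u\|_{p,Q^\ast}$ directly in the split, and the case $p=\infty$ requires only replacing summation by pointwise control via bounded overlap.
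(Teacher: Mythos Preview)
Your argument on the cubes $Q\in\mathcal{W}_3$ is essentially the paper's: the same subtraction of $\pi_{Q^\ast}u$, the same Leibniz--plus--Lemma~\ref{lem:chain} estimate, and your closing step (global Proposition~\ref{prop:poinc} plus norm equivalence on the finite--dimensional $\ker\A$) is exactly the content of Lemma~\ref{lem:sob_variants}, which the paper invokes at the end.

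There is, however, a genuine gap: you only estimate on cubes of $\mathcal{W}_3$ and then claim control of $\|\nabla^l E_\Omega u\|_{p,\R^n\setminus\bar\Omega}$. But $\spt\varphi_Q\subset\tfrac{17}{16}Q$ can spill into neighbouring cubes $Q_0\in\mathcal{W}_2\setminus\mathcal{W}_3$, so $E_\Omega u$ is in general nonzero on such large cubes, and these are not covered by your sum. On $Q_0$ the partition of unity no longer sums to one (only $(\mathrm{P}_1)$ on $\bigcup\mathcal{W}_3$), there is no reflected cube $Q_0^\ast$ to subtract, and your decomposition is simply unavailable. The paper treats this case separately: one estimates each summand $\|\nabla^l(\varphi_Q\pi_{Q^\ast}u)\|_{p,Q_0}$ directly, and the key observation is that any $Q\in\mathcal{W}_3$ with $\tfrac{17}{16}Q\cap Q_0\neq\emptyset$ satisfies $\ell(Q)\geq\ell(Q_0)/4\geq\varepsilon\delta/(64n)$, so the negative powers $\ell(Q)^{j-l}$ from Leibniz are bounded by a constant and one concludes via Proposition~\ref{prop:poinc} on $Q^\ast$. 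This is the easier of the two cases, but it does require its own argument and does not reduce to what you wrote.
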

\begin{proof}
We estimate on each cube in $\mathcal{W}_2$, distinguishing between small and large cubes. We also distinguish between $\A$ and the derivatives of order less than $k$. Let $Q_0\in\mathcal{W}_3$. Then, since $\varphi_Q$ sum to one in $Q_0$ and $\A\pi_{Q_0^*}u\equiv0$, we have
\begin{align*}
\|\A E_\Omega u\|_{p,Q_0}&=\left\|\A \sum_{Q\in\mathcal{W}_3}\varphi_Q( \pi_{Q^*}u-\pi_{Q_0^*}u)\right\|_{p,Q_0}\\
&\leq\left\|\sum_{\emptyset\neq Q_0\cap Q\in\mathcal{W}_3}\A(\varphi_Q( \pi_{Q^*}u-\pi_{Q_0^*}u))\right\|_{p,Q_0}\\
&\leq C\sum_{\emptyset\neq Q_0\cap Q\in\mathcal{W}_3}\sum_{j=0}^{k-1}\||\nabla^{k-j}\varphi_Q| |\nabla^j(\pi_{Q^*}u-\pi_{Q_0^*}u)|\|_{p,Q_0}\\
&\leq C\sum_{\emptyset\neq Q_0\cap Q\in\mathcal{W}_3}\sum_{j=0}^{k-1}\ell(Q_0)^{j-k}\|\nabla^j(\pi_{Q^*}u-\pi_{Q_0^*}u)\|_{p,Q_0^*}\\
&\leq C\sum_{\emptyset\neq Q_0\cap Q\in\mathcal{W}_3}\|\A u\|_{p,\cup\mathcal{C}(Q_0^*,Q^*)},
\end{align*}
where the last inequality follows from Lemma~\ref{lem:chain}. With a similar reasoning we obtain, for $0\leq l\leq k-1$, that 
\begin{align*}
\|\nabla^l E_\Omega u\|_{p,Q_0}\leq C\left(\|\nabla^l u\|_{p,Q_0^*}+\ell(Q_0)^{k-l}\sum_{\emptyset\neq Q_0\cap Q\in\mathcal{W}_3}\|\A u\|_{p,\cup\mathcal{C}(Q_0^*,Q^*)}\right).
\end{align*}
 We move on to the case $Q_0\in\mathcal{W}_2\setminus\mathcal{W}_3$, so if $Q\cap Q_0\neq\emptyset$, then $\ell(Q)\geq\ell(Q_0)/4\geq\varepsilon\delta/(64n)\geq C$. Let $0\leq l\leq k-1$, so that
\begin{align*}
\|\nabla^l E_\Omega u\|_{p,Q_0}&\leq\sum_{\emptyset\neq Q_0\cap Q\in\mathcal{W}_3}\|\nabla^l(\varphi_Q \pi_{Q^*}u)\|_{p,Q_0}\\
&\leq C\sum_{\emptyset\neq Q_0\cap Q\in\mathcal{W}_3}\sum_{j=1}^l \ell(Q_0)^{j-l}\|\nabla^{j} \pi_{Q^*}u\|_{p,Q_0}\\
&\leq C\sum_{\emptyset\neq Q_0\cap Q\in\mathcal{W}_3}\sum_{j=1}^l \ell(Q_0)^{j-l}\|\nabla^{j} \pi_{Q^*}u\|_{p,Q^*}\\
&\leq C\sum_{\emptyset\neq Q_0\cap Q\in\mathcal{W}_3}\sum_{j=1}^l \ell(Q_0)^{j-l}(\|\nabla^{j}( \pi_{Q^*}u-u)\|_{p,Q^*}+\|\nabla^{j}u\|_{p,Q^*})\\
&\leq C\sum_{\emptyset\neq Q_0\cap Q\in\mathcal{W}_3}\|u\|_{\Vsob^{l,p}(Q^*,V)}+\ell(Q_0)^{k-l} \|\A u\|_{p,Q^*}.
\end{align*}
As, above, we similarly show that $\|\A E_\Omega u\|_{p,Q_0}\leq C\sum_{\emptyset\neq Q_0\cap Q\in\mathcal{W}_3} \|u\|_{\Vsob^{\A,p}(Q^*)}$. There is no loss in assuming that $\ell(Q_0)\leq1$ for any $Q_0\in\mathcal{W}_2$, so that we can collect the estimates to obtain
\begin{align*}
\|E_\Omega u\|_{\Vsob^{\A,p}(Q_0)}\leq C \sum_{\emptyset\neq Q_0\cap Q\in\mathcal{W}_3} \|u\|_{\Vsob^{\A,p}(\mathcal{C}(Q_0^*,Q^*))}.
\end{align*}
It remains to use local finiteness of the partition of unity (see, e.g., \cite[Eqn.~(3.1-4)]{Jones}) and Lemma~\ref{lem:sob_variants} to conclude.
\end{proof}
\begin{proof}[Proof of Theorem~\ref{thm:extension}]
We firstly show that $E_\Omega u$ has weak derivatives in $\R^n$, for which it suffices (by Lemma~\ref{lem:density}) to show that $E_\Omega$ maps $u\in\Vsob^{k,\infty}(\bar{\Omega},V)$ into $\Vsob^{k,\infty}(\R^n,V)$. This we do in two steps. First, we show that the obvious candidate $(\nabla^l u)\chi_{\bar{\Omega}}+(\nabla^l E_\Omega u)\chi_{\R^n\setminus\bar{\Omega}}$ is bounded for all $0\leq l\leq k$. We need only prove this for $l=k$, the other cases being dealt with in Lemma~\ref{lem:localbounds} for $p=\infty$. As before, we first take $Q_0\in\mathcal{W}_3$, where
\begin{align*}
|\nabla^k E_\Omega u|&\leq |\nabla^k\pi_{Q_0^*}u|+\sum_{\emptyset\neq Q_0\cap Q\in\mathcal{W}_3}|\nabla^k(\varphi_Q(\pi_{Q^*} u-\pi_{Q_0^*}u))|\\
&\leq C\left( |\nabla^k\pi_{Q_0^*}u|+\sum_{\emptyset\neq Q_0\cap Q\in\mathcal{W}_3}\|\nabla^k u\|_{\infty,\mathcal{C}(Q_0^*,Q^*)}\right).
\end{align*}
Clearly, $P\mapsto\|\nabla^k P\|_{\infty,Q_0^*}$ is a norm on $\R_d[x]^V/\R_{k-1}[x]$, whereas $P\mapsto\|\nabla^k \Pi P\|_{\infty,Q_0^*}$ is a semi--norm. We therefore get that $\|\nabla^k\pi_{Q_0^*}u\|_{\infty,Q_0^*}\leq C \|\nabla^k\mathcal{P}_{Q_0^*}u\|_{\infty,Q_0^*}\leq C\|\nabla^k u\|_{\infty,Q_0^*}$, where the latter inequality is given by the stability of averaged Taylor polynomials. Now consider the other case, when $Q_0\in\mathcal{W}_2\setminus\mathcal{W}_3$, and recall that then $\ell(Q_0)\geq C$. We have
\begin{align*}
|\nabla^l E_\Omega u|&\leq \sum_{\emptyset\neq Q_0\cap Q\in\mathcal{W}_3}|\nabla^k(\varphi_Q\pi_{Q^*})|\leq C\sum_{\emptyset\neq Q_0\cap Q\in\mathcal{W}_3}\sum_{j=1}^k\ell(Q)^{j-k}|\nabla^j\pi_\Omega u|\\
&\leq C\sum_{\emptyset\neq Q_0\cap Q\in\mathcal{W}_3}\sum_{j=1}^k\ell(Q_0)^{j-k}|\nabla^j\pi_\Omega u|\leq C\sum_{\emptyset\neq Q_0\cap Q\in\mathcal{W}_3}\sum_{j=1}^k|\nabla^j\pi_\Omega u|,
\end{align*}
so we can conclude as in the previous step. 

The second step is to show that $\nabla^l E_\Omega u$ is continuous for $0\leq l < k$. To this end, it suffice to show that
\begin{align*}
\|\nabla^lE_\Omega u-(\nabla^lu)_{Q_0^*}\|_{\infty,Q_0}\rightarrow0\qquad\text{ as }\ell(Q_0)\rightarrow0
\end{align*}
for $Q_0\in\mathcal{W}_3$. Here $(\,\cdot\,)_{S}$ denotes the average with respect to Lebesgue measure on $S$. By the triangle inequality and properties of the partition of unity, we get
\begin{align*}
\|\nabla^lE_\Omega u-(\nabla^lu)_{Q_0^*}\|_{\infty,Q_0}&\leq\left\|\nabla^{l}\sum_{\emptyset\neq Q_0\cap Q\in\mathcal{W}_3}\varphi_Q(\pi_{Q^*}-\pi_{Q_0^*})\right\|_{\infty,Q_0}\\
&+\|\nabla^l\pi_{Q_0^*}u-(\nabla^lu)_{Q_0^*}\|_{\infty,Q_0}=\mathbf{I}+\mathbf{II}.
\end{align*}
By an estimation which is by now routine (see the proof of Lemma~\ref{lem:localbounds}) we have that
\begin{align*}
\mathbf{I}\leq C\ell(Q_0)^{k-l}\sum_{\emptyset\neq Q_0\cap Q\in\mathcal{W}_3}\|\A u\|_{\infty,\cup\mathcal{C}(Q_0^*,Q^*)},
\end{align*}
which tends to zero as $\ell(Q_0)\rightarrow0$ since $k>l$. For the second term, we have by \cite[Lem.~2.1]{Jones} and closeness of $Q_0$ and $Q_0^*$ that
\begin{align*}
\mathbf{II}&\leq C\|\nabla^l\pi_{Q_0^*}u-(\nabla^lu)_{Q_0^*}\|_{\infty,Q_0^*}\leq C\|\nabla^l\pi_{Q_0^*}u-\nabla^lu\|_{\infty,Q_0^*}+C\|\nabla^lu-(\nabla^lu)_{Q_0^*}\|_{\infty,Q_0^*}\\
&\leq C\ell(Q_0)\|\A u\|_{\infty,Q_0^*}+C\ell(Q_0)\|\nabla^{l+1} u\|_{\infty,Q_0^*},
\end{align*}
where the last inequality we used Proposition~\ref{prop:poinc} and the fact that $\nabla^l u$ is Lipschitz as $l<k$.

We next note that by density of smooth functions in $\sobo^{\A,p}(\Omega)$ (Lemma~\ref{lem:density}), it suffices to prove boundedness of the extension for maps in $u\in\hold^\infty(\bar\Omega,V)$. By the argument above, we have that $E_{\Omega}u\in\Vsob^{k,\infty}(\R^n,V)$, so that
\begin{align*}
\nabla^lE_\Omega u&=(\nabla^l u)\chi_{\bar\Omega}+(\nabla^l E_\Omega u)\chi_{\R^n\setminus\bar\Omega}\qquad\text{for } {0\leq l\leq k-1}\\
\A E_\Omega u&=(\A u)\chi_{\bar\Omega}+(\A E_\Omega u)\chi_{\R^n\setminus\bar\Omega}.
\end{align*}
It follows that
\begin{align*}
\|E_{\Omega}u\|_{\Vsob^{\A,p}(\R^n)}\leq \|u\|_{\Vsob^{\A,p}(\Omega)}+\|E_{\Omega}u\|_{\Vsob^{\A,p}(\R^n\setminus\bar\Omega)}\leq C \|u\|_{\sobo^{\A,p}(\Omega)},
\end{align*}
where in the last inequality we used Lemmas~\ref{lem:localbounds} and \ref{lem:sob_variants}. The proof is complete.
\end{proof}

\subsection{Proofs of the main results}
We now begin the proof of Theorem~\ref{thm:main}, by showing that \ref{it:main_a}$\implies$\ref{it:main_b}$\implies$\ref{it:main_c}$\implies$\ref{it:main_a} and \ref{it:main_a}$\implies$\ref{it:main_d}$\implies$\ref{it:main_e}$\implies$\ref{it:main_a}. We first prove that \ref{it:main_a} implies \ref{it:main_b} in the scale of Besov spaces and hereafter establish Theorem~\ref{thm:main_k}; the corresponding statement for Lebesgue spaces $\lebe^{\frac{n}{n-1}}$ for $k=1$ follows in the same way.
\begin{proof}[Proof of Theorem~\ref{thm:main_k} (sufficiency of FDN)] Since $\A$ has FDN, by Theorem~\ref{thm:tools}, $\A$ is elliptic and cancelling and there exists a bounded, linear extension operator $E_{\ball} \colon\sobo^{\A,1}(\ball)\rightarrow\Vsob^{\A,1}(\R^n)$. A close inspection of the proof of Theorem~\ref{thm:extension} reveals that $E_{\ball}$ maps restrictions to the ball $\ball$ of $\hold^\infty(\R^n,V)$--functions into $\hold^\infty_c(\tilde{\ball},V)$ for a larger ball $\tilde{\ball}\Supset\ball$, which depends on $\ball$ only. We write $p:=n/(n-k+s)$ and use H\"older's Inequality to get that
\begin{align*}
\|u\|_{{\besov}_q^{s,p}(\ball,V)}&\leq\|E_{\ball}u\|_{{\besov}^{s,p}_q(\R^n,V)}=\|E_{\ball}u\|_{\lebe^p(\tilde{\ball},V)}+\|E_{\ball}u\|_{\dot{\besov}{_q^{s,p}}(\R^n,V)}\\
&\lesssim\|E_{\ball}u\|_{\lebe^{\frac{n}{n-1}}(\tilde{\ball},V)}+\|E_{\ball}u\|_{\dot{\besov}{_q^{s,p}}(\R^n,V)}\\
&\lesssim\|\nabla^{k-1}E_{\ball}u\|_{\lebe^{\frac{n}{n-1}}(\tilde{\ball},V)}+\|E_{\ball}u\|_{\dot{\besov}{_q^{s,p}}(\R^n,V)}
\end{align*}
where the last estimate follows from Poincar\'e's Inequality with zero boundary values. We conclude by \cite[Thm.~1.3,~Thm.~8.4]{VS} and boundedness of $E_{\ball}$.
\end{proof}
We will complete the proof of Theorem~\ref{thm:main_k} (i.e., show necessity of FDN) at the end of this section. Returning to Theorem~\ref{thm:main}, it is clear that \ref{it:main_b} implies \ref{it:main_c}.
\begin{proof}[Proof of \ref{it:main_c}$\implies$\ref{it:main_a} in Theorem~\ref{thm:main}]
	Suppose that $\sobo^{\A,1}(B)\hookrightarrow\lebe^p(B,V)$, but $\A$ is not $\C$--elliptic. By Lemma~\ref{lem:nec_ell}, we have that $\A$ is elliptic, so there exist pairwise linearly independent $\xi,\eta\in\R^n$, $v,w\in V$ such that $\A[\xi+\imag \eta](v+\imag w)=0$.
	Define an inner product on $\R^n$ such that $\xi,\eta$  are orthonormal and define an orthonormal basis with respect to this new inner product. Say that $Q$ is the change of base matrix. By changing the basis to the new one, we obtain an inclusion $\sobo^{\tilde\A,1}(C)\hookrightarrow\lebe^p(C,V)$, where $C=QB$ is an ellipsoid and $\tilde\A$ is the expression of $\A$ in the new basis. We also write $\tilde{\xi}=Q\xi,\tilde \eta=Q\eta$ and have that $\tilde{\A}[\tilde\xi+\imag \tilde\eta](v+\imag w)=0$.
	 By the computation in the proof of Lemma~\ref{lem:FDNimpliesEC}, we have that 
	 \begin{align}\label{eq:u}
		u(x)=f(x\cdot \tilde\xi+\imag x\cdot \tilde\eta)(v+\imag w)
	 \end{align}
	 	  is $\tilde\A$--free at all points where $f$ is holomorphic (here ``$\,\cdot\,$'' is the inner product in the new basis). Translating $C$ if necessary, we can assume that $\xi$ is parallel with the inward normal at $0\in\partial C$. By flattening the boundary of $C$ near $0$, we can assume that there is a flat neighbourhood of $0$ in $\partial C$, i.e., contained in $\{x\in\R^n\colon x\cdot\tilde\xi=0\}$.
	 
	 Let $f\colon\C\setminus(-\infty,0]\rightarrow\C$ be a branch of $z^{-2/p}$ in \eqref{eq:u}. Since $p>1$, the singularity $|z|^{-2/p}$ is integrable near zero (in $\mathrm{span}\{\tilde\xi,\tilde\eta\}\simeq \C$), from which one easily infers that $u\in\lebe^1(C,V)$. Coupled with the fact that $\tilde{\A}u=0$ in $C$ from the geometrical construction, we have that $u\in\sobo{^{\tilde\A,1}}(C)$. On the other hand, we have that $|u|^p$ is integrable a neighbourhood of $0$ in $C$ if and only if $|z|^{-2}$ is integrable in neighbourhood of $0$ in $\{x\in\mathrm{span}\{\tilde\xi,\tilde\eta\}\colon x\cdot\tilde\xi>0\}$, which is clearly not the case. Hence $u\in\sobo^{\tilde\A,1}(C)$, but $u\notin\lebe^p(C,V)$. The proof is complete.
\end{proof}

To see that \ref{it:main_a} implies \ref{it:main_d}, we prove the following:
\begin{theorem}\label{thm:compactness}
Let $\A$ be as in \eqref{eq:A} with $k=1$. Suppose that $\A$ is $\C$--elliptic. Then $\sobo^{\A,1}(\ball)\hookrightarrow\hookrightarrow\lebe^{q}(\ball,V)$ for all $1\leq q<\frac{n}{n-1}$.
\end{theorem}
The proof of Theorem~\ref{thm:compactness} relies on the Riesz--Kolmogorov criterion  and the following Nikolski\u{\i}--type estimate:
\begin{lemma}[Nikolski\u{\i}--type Estimate]\label{lem:Besov}
Let $\A$ be an elliptic operator of the form \eqref{eq:A}, $k=1$. Fix $R>0$. Then for every $0<s<1$ there exists a constant $c=c(s,R)>0$ such that if $u\in\sobo^{\A,1}(\R^{n})$ vanishes identically outside $\ball(0,R)$, then there holds
\begin{align*}
\int_{\R^{n}}|u(x+y)-u(x)|^{p}\dif x \leq c\|\A u\|_{\lebe^{1}(\ball(0,R),W)}^{p}|y|^{sp}.
\end{align*}
whenever $p<n/(n-1+s)$. 
\end{lemma}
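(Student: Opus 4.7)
The plan is to combine the Green's function representation from Section \ref{sec:harmonic} with a direct scaling estimate on the shifted kernel. After reducing to smooth $u$ by mollification (using that $\A(u*\rho_\varepsilon)=(\A u)*\rho_\varepsilon$ converges in $\lebe^1$), it suffices to treat $u\in\ccinfty(\R^n,V)$ supported in $\overline{\ball(0,R)}$, for which
\begin{align*}
u(x)=\int_{\R^n}\greenA(x-z)\A u(z)\dif z
\end{align*}
holds pointwise, where $\greenA$ is the $(1-n)$--homogeneous Green's function (as $k=1$). Writing $K_y(z):=\greenA(z+y)-\greenA(z)$, the difference equals $K_y*\A u$; since both $u$ and $u(\cdot+y)$ vanish outside $\ball(0,R+|y|)$, Minkowski's integral inequality yields, for $p\geq 1$,
\begin{align*}
\|u(\cdot+y)-u(\cdot)\|_{\lebe^p(\R^n)}\leq\|K_y\|_{\lebe^p(\ball(0,2R+|y|))}\,\|\A u\|_{\lebe^1(\ball(0,R),W)}.
\end{align*}

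The core step is to estimate $\|K_y\|_{\lebe^p}$ by a scaling argument. Substituting $z=|y|\zeta$ and using $(1-n)$--homogeneity of $\greenA$,
\begin{align*}
\|K_y\|_{\lebe^p(\ball(0,\rho))}^p=|y|^{n+(1-n)p}\int_{|\zeta|<\rho/|y|}|\tilde{K}_\omega(\zeta)|^p\dif\zeta,\qquad\omega:=y/|y|,
\end{align*}
with $\tilde{K}_\omega(\zeta):=\greenA(\zeta+\omega)-\greenA(\zeta)$. I would then verify that $\tilde{K}_\omega\in\lebe^p(\R^n)$ whenever $1<p<n/(n-1)$: near $\zeta=0$ and $\zeta=-\omega$ the singularity is of order $|\cdot|^{1-n}$, so $p$--integrability at these points requires $p<n/(n-1)$; at infinity, the mean value theorem together with $|\D\greenA(z)|\lesssim|z|^{-n}$ (from $(-n)$--homogeneity) gives $|\tilde{K}_\omega(\zeta)|\lesssim|\zeta|^{-n}$, which is $p$--integrable at infinity iff $p>1$. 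Consequently, for $1<p<n/(n-1)$ and $|y|\leq R$, one obtains $\|K_y\|_{\lebe^p(\ball(0,3R))}\leq c(R)\,|y|^{1-n+n/p}$, and the exponent $1-n+n/p$ satisfies $\geq s$ precisely when $p\leq n/(n-1+s)$. Plugging this into the Minkowski bound closes the estimate in the main range.

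To finish, I would address the remaining cases separately. If $|y|>R$, the supports of $u$ and $u(\cdot+y)$ are disjoint, so the claim reduces to a Sobolev--type bound $\|u\|_{\lebe^p(\ball(0,R))}\lesssim_R\|\A u\|_{\lebe^1}$, which follows from $|u|\leq c\,I_1(|\A u|)$ and Theorem \ref{thm:anal_harm}\ref{itm:riesz_domains}. For $p\leq 1$, H\"older's inequality on the bounded set $\ball(0,2R+|y|)$ reduces the claim to some $p'\in(1,n/(n-1+s))$ already handled. The main obstacle I anticipate is the scaling balance: the origin singularity forces $p<n/(n-1)$, while $\lebe^p$--integrability at infinity requires $p>1$, and these must be simultaneously compatible with the target exponent $p<n/(n-1+s)$. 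This is indeed the case because $1<n/(n-1+s)<n/(n-1)$ whenever $s\in(0,1)$, which is precisely why the Nikolski\u{\i} exponent in the statement is sharp at the level of scaling.
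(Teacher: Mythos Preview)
Your proof is correct and follows the same overall strategy as the paper: write $u(\cdot+y)-u=K_{y}*\A u$ via the Green's function representation and then control $\|K_{y}\|_{\lebe^{p}}$ on a ball by Young/Minkowski. The paper, however, obtains the kernel bound differently: instead of your scaling argument it invokes the pointwise fractional estimate
\[
\left\vert\frac{1}{|z-z'|^{n-1}}-\frac{1}{|z-z''|^{n-1}}\right\vert\leq c\,|z'-z''|^{s}\Big(\frac{1}{|z-z'|^{n-1+s}}+\frac{1}{|z-z''|^{n-1+s}}\Big),
\]
which immediately produces the factor $|y|^{s}$ together with a convolution against $|\cdot|^{-(n-1+s)}$; Young's inequality then forces $p<n/(n-1+s)$ as the integrability threshold of this kernel on $\ball(0,3R)$. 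Your route via homogeneity is arguably cleaner---it avoids the auxiliary interpolation inequality and even delivers the endpoint $p=n/(n-1+s)$---while the paper's pointwise bound is more classical and makes the role of the parameter $s$ explicit from the start. One small correction: the supports of $u$ and $u(\cdot+y)$ are disjoint only for $|y|>2R$, not $|y|>R$; but your reduction to $\|u\|_{\lebe^{p}}\lesssim\|\A u\|_{\lebe^{1}}$ goes through anyway by the triangle inequality, so the argument is unaffected.
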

Note that by {Ornstein}'s Non--inequality, $s=1$ is not allowed in the lemma. A more general, sharp version that implies Lemma~\ref{lem:Besov} 
can be found in \cite[Prop.~8.22]{VS}. 
\begin{proof}[Proof of Theorem~\ref{thm:compactness}]
	Let $(u_j)_j$ be a bounded subset of $\sobo^{\A,1}(B)$. Employing the extension $E_B$ of Theorem~\ref{thm:extension}, we have that there exists $R>0$ such that the functions $(E_B u_j)_j$ are supported in $B(0,R)$ and are uniformly bounded in $\sobo^{\A,1}(\R^n)$. We record that $\|\A E_Bu_j\|_{\lebe^1(\R^n,W)}\leq c$. It follows by Lemma~\ref{lem:FDNimpliesEC} and the estimate \eqref{eq:VS} that $(E_Bu_j)_j$ is bounded in $\lebe^{n/(n-1)}(B(0,R),V)$, hence in $\lebe^q(B(0,R),V)$ by H\"older's Inequality. 
	
	Let $\varepsilon>0$ and $s\in(0,1)$ be such that $q<n/(n-1+s)$. By Lemma~\ref{lem:Besov}, we have that
	\begin{align*}
		\|E_Bu_j(\cdot+y)-E_B u_j\|_{\lebe^q(\R^n,V)}\leq c_0|y|^s< \varepsilon,
	\end{align*}
	provided that $|y|<(\varepsilon/c_0)^{1/s}$. It follows by the Riesz--Kolmogorov Theorem \cite[Thm.~4.26]{BrezisFA} that the sequence $(E_Bu_j)_j$ has a convergent subsequence which we do not relabel. Since $(E_Bu_j)_j$ converges in $\lebe^q(B(0,R),V)$, it follows that $(u_j)_j$ converges in $\lebe^q(B,V)$. The proof is complete.
\end{proof}
It is clear that \ref{it:main_d}$\implies$\ref{it:main_e}, so that we can now complete the:
\begin{proof}[Proof of Theorem~\ref{thm:main}]
It remains to see that \ref{it:main_e} implies \ref{it:main_a}, which is now a simple consequence of the Equivalence Lemma~\ref{lem:equivalencelemma}. We  choose $E_{1}=\sobo^{\A,1}(\ball)$, $E_{2}=\lebe^{1}(\ball,W)$, $E_{3}=\lebe^{1}(\ball,V)$, and $A:=\A\in\mathscr{L}(\sobo^{\A,1}(\ball),\lebe^{1}(\ball,W))$, whereas $B:=\iota$ is the embedding operator $\iota\colon\sobo^{\A,1}(\ball)\hookrightarrow\hookrightarrow \lebe^{1}(\ball,V)$. It is then clear that $\|u\|_{\sobo^{\A,1}(\ball)}= \|u\|_*$, so the equivalence lemma yields that $\A$ has finite dimensional null--space.
\end{proof}
\begin{proof}[Proof of Theorem~\ref{thm:main_k} (necessity of FDN)]
Assume that the embedding holds. By standard embeddings of Besov spaces, we have that $\sobo^{\A,1}(\ball)\hookrightarrow\sobo^{k-1,p}(\ball,V)$ for some $p>1$. If $k=1$, we use Theorem~\ref{thm:main}, \ref{it:main_c} implies \ref{it:main_a}, to see that $\A$ has FDN. Otherwise, we give the following simple argument: assume that $\A$ is not FDN, so that the maps $u_j(x)=\exp(jx\cdot\xi)v$ lie in $\ker\A$ for some non--zero complex $\xi,v$. We traced this example back to \cite{Smith}, but it was likely known before (cp. \cite[Eq.~(3.2)]{Aronszajn}). The assumed embedding and H\"older's Inequality give
\begin{align*}
j^{k-1}\left(\int_{\ball}|\exp(jx\cdot\xi)|^p\dif x\right)^\frac{1}{p}&\lesssim \|u_j\|_{\sobo^{k-1,p}(\ball,V)}\lesssim\|u_j\|_{\lebe^1(\ball,V)}\\
&\lesssim\left(\int_{\ball}|\exp(jx\cdot\xi)|^p\dif x\right)^\frac{1}{p},
\end{align*}
which leads to a contradiction as $j\rightarrow\infty$. Here constants depend on $\diam\ball,p,n$ only.
\end{proof}
\subsection{On $\bv^\A$ and traces}
While we do not mention this explicitly, by strict density of smooth maps in the space of measures \cite[Thm.~2.8]{BDG}, in the framework of Theorem~\ref{thm:main_k} we have that $\A$ of order $k$ has FDN if and only if 
\begin{align}\label{eq:BVA_emb}
\|u\|_{\besov^{s,\frac{n}{n-k+s}}_q (\ball,V)}\leq C\left(|\A u|(\ball)+\|u\|_{\lebe^1(\ball,V)}\right)\quad\text{ for all }u\in\bv^\A(B).
\end{align}
This in particular shows that $\bv^\A(B)\subset\sobo^{k-1,n/(n-1)}(B,V)$, and it is likely that the inclusion is strictly continuous by an argument similar to \cite[Prop. 3.7]{RS} in the $\bv$--case. We do not pursue this here.

The proof of \eqref{eq:BVA_emb} can be simplified considerably for first order operators, i.e., if $k=1$. In this case, one can use the main result of \cite{BDG} to show that the trivial extension $\bar u$ by zero on $\R^n\setminus\bar B$ of a map $u\in\bv^\A(B)$ does lie in $\bv^\A(\R^n)$:
\begin{align*}
|\A \bar u|(\R^n)=|\A u|(B)+\|\tr u\otimes_\A \nu_{\partial B}\|_{\lebe^1(\partial B,W)}\leq C(|\A u|(B)+\|u\|_{\lebe^1(B,V)}).
\end{align*}
In this case, \textsc{Van Schaftingen}'s estimates in \cite[Sec.~8]{VS} are applicable by Lemma~\ref{lem:FDNimpliesEC} and strict density to obtain
\begin{align*}
|u|_{\besov^{s,\frac{n}{n-1+s}}_q(B) }\leq |\bar u|_{\besov^{s,\frac{n}{n-1+s}}_q (\R^n)}\leq C|\A \bar u|(\R^n)\leq C(|\A u|(B)+\|u\|_{\lebe^1(B,V)}).
\end{align*}
We do not know any generalization of this method for higher order operators. 

Another way to construct simpler extension operators for $\bv^\A(B)$--maps for \emph{first order} FDN operators $\A$ can be achieved by letting $u\in\bv^\A(B)$ and picking a map $v\in\bv(\R^{n}\setminus \bar{B},V)$ such that $\trace(v)=\trace(u)$ $\mathcal{H}^{n-1}$--a.e. on $\partial B$. This can be arranged in a way such that $\|v\|_{\bv(\R^{n}\setminus\overline{B},V)}\leq C\|\tr u\|_{\lebe^{1}(\partial B,V)}$. We then have that 
\begin{align*}
\tilde{E}u=\begin{cases}
u&\text{in }B\\
v&\text{in }\R^n\setminus\bar B
\end{cases}
\end{align*}
lies in $\bv^\A(\R^n)$ and one can check that 
\begin{align*}
\|\tilde Eu\|_{\bv^\A(\R^n)}&\leq \|u\|_{\bv^\A(B)}+\|v\|_{\bv(\R^n\setminus\bar B)}\\
&\leq \|u\|_{\bv^\A(B)}+C\|\tr u\|_{\lebe^1(\partial B,V)}
\leq C\|u\|_{\bv^\A(B)},
\end{align*}
where, in the last inequality, we again used the main result in \cite{BDG}. In this situation, $\tilde{E}$ is never linear as a consequence of \textsc{Peetre}'s foundational work \cite{Peetre} (see also \cite{PelWoj}); in fact, the map $\mathbf{L}\colon \lebe^{1}(\partial B,V)\to\bv(\R^{n}\setminus\bar{B},V)$ cannot be chosen to be linear \emph{and} bounded simultaneously. Compared with this extension operator $\tilde{E}$, the operator $E_{B}$ from Theorem~\ref{thm:tools} is linear. This, however, is in accordance with the former result as $E_{B}$ extends given $\bv^{\A}$--maps defined on $B$ \emph{and not} $\lebe^{1}$--maps defined on $\partial B$.

One can hope to generalise this argument to higher order operators. Indeed, the method in \cite{BDG} implies the fact that there exists a continuous, linear trace operator $\tr\colon\bv^\A(B)\rightarrow\sobo^{k-1,1}(\partial B,V)$, where $k$ is the order of $\A$. One would then hope for the existence of a bounded linear operator, mapping $\sobo^{k-1,1}(\partial B,V)$ to $\bv^k(\R^n\setminus\bar B,V)$, which would enable us to construct an operator such as $\tilde E$ above.
Building on the work of \textsc{Uspenski\u{\i}} \cite{Uspenskii} for $k=2$ and \textsc{Maz'ya} \cite{Mazya}, no such operator exists, as was recently rediscovered and strengthened by \textsc{Mironescu} and \textsc{Russ} in \cite{MR}. They showed that the trace operator on $\bv^k(\R^n_+)$ is continuous onto $\besov^{k-1,1}_1(\R^{n-1})$, which is in general strictly smaller than $\sobo^{k-1,1}(\R^{n-1})$ (see \cite[Rk.~A.1]{BP}). From this point of view one would expect:
\begin{conjecture}
An operator $\A$ as in \eqref{eq:A} with $k\ge2$ has FDN if and only if there exists a continuous, linear, surjective trace operator $\mathrm{Tr}\colon\bv^{\A}(\ball)\rightarrow\besov^{k-1,1}_1(\partial\ball,V)$.
\end{conjecture}
A few remarks are in order. Necessity of FDN can be proved by a modification of the arguments in \cite[Sec.~4.3]{BDG}. Surjectivity is obvious, using \cite[Thm.~1.3-4]{MR} and $\bv^{k}(\ball,V)\hookrightarrow\bv^{\A}(\ball)$. The difficulty stems from proving boundedness (hence, well--definedness) of the trace operator, which cannot be reduced to the situation in \cite{MR} by {Ornstein}'s Non--inequality. By \eqref{eq:BVA_emb}, one can prove a sub--optimal trace inequality
\begin{align}\label{eq:weaktrace}
\bv^{\A}(\ball)\hookrightarrow\besov^{s-\frac{1}{p},p}_q(\partial\ball,V)\quad\text{ for }s\uparrow k\text{, so }p=\frac{n}{n-k+s}\downarrow1,\,q\downarrow1,
\end{align}
using standard trace theory for Besov spaces, but this is of course insufficient to solve the problem. We do not see a straightforward way to merge the techniques in \cite{BDG,MR} and intend to tackle the problem in the future.


\end{document}